\newcommandx{\attn}[2][1=]{\todo[linecolor=red,backgroundcolor=blue!25,bordercolor=red,#1]{#2}}
\newcommandx{\other}[2][1=]{\todo[linecolor=OliveGreen,backgroundcolor=OliveGreen!25,bordercolor=OliveGreen,#1]{#2}}
\newcommandx{\thiswillnotshow}[2][1=]{\todo[disable,#1]{#2}}
\definecolor{myblue}{rgb}{.9, .9, 1}
\def\namedlabel#1#2{\begingroup
   \def\@currentlabel{#2}%
   \label{#1}\endgroup
}
\def\th@plain{%
  \thm@notefont{}% same as heading font
  \itshape % body font
}
\def\th@definition{%
  \thm@notefont{}% same as heading font
  \normalfont % body font
}
\newtheorem{theorem}{Theorem}[section]
\newtheorem{lemma}[theorem]{Lemma}
\newtheorem{corollary}[theorem]{Corollary}
\newtheorem{proposition}[theorem]{Proposition}
\newtheorem{assumption}[theorem]{Assumption}
\theoremstyle{definition}
\newtheorem{definition}[theorem]{Definition}
\theoremstyle{definition}
\newtheorem{example}[theorem]{Example}
\theoremstyle{definition}
\newtheorem{remark}[theorem]{Remark}
\newcommand{\distP}{\ell _{\text{poly}}}
\setlist[enumerate]{nosep}
\newcommand{\dpp}{d_{\text{PPS}}}
\newcommand{\abd}{\ensuremath{\mathbf a}}
\newcommand{\xx}{\ensuremath{\mathbf x}}
\newcommand{\yy}{\ensuremath{\mathbf y}}
\newcommand{\zz}{\ensuremath{\mathbf z}}
\newcommand{\ww}{\ensuremath{\mathbf w}}
\newcommand{\vv}{\ensuremath{\mathbf v}}
\newcommand{\pp}{\ensuremath{\mathbf p}}
\newcommand{\qq}{\ensuremath{\mathbf q}}
\newcommand{\uu}{\ensuremath{\mathbf u}}
\newcommand{\Fhat}{\ensuremath{{\mathcal F}}}
\newcommand{\fraks}{\ensuremath{\mathfrak{s}}}
\newcommand{\frakg}{\ensuremath{\mathfrak{g}}}
\newcommand{\Ws}{\ensuremath{\mathcal W}_{-1}}
\newcommand{\NN}{\ensuremath{\mathbb N}}
\newcommand{\RR}{\ensuremath{\mathbb R}}
\newcommand{\RP}{\ensuremath{\mathbb{R}_+}}
\newcommand{\argmin}{\ensuremath{\operatorname*{argmin}}}
\newcommand{\bd}{\partial} % edited by TK
\newcommand{\cone}{\ensuremath{\operatorname{cone}}}
\newcommand{\dom}{\ensuremath{\operatorname{dom}}}
\newcommand{\dist}{\ensuremath{\operatorname{d}}}
\newcommand{\wdist}{{\widehat\dist}}
\newcommand{\fenv}[2][]%
{\ensuremath{\,\overrightarrow{\operatorname{env}}_{#2}}^{#1}}
\newcommand{\benv}[2][]%
{\ensuremath{\,\overleftarrow{\operatorname{env}}_{#2}^{#1}}}
\newcommand{\env}[2][]%
{\ensuremath{{\operatorname{env}}_{#2}^{#1}}}
\newcommand{\W}{\mathcal W}
\newcommand{\bP}[2][]%
{\ensuremath{\,\overleftarrow{\operatorname{P}}_{#2}^{#1}}}
\newcommand{\fP}[2][]%
{\ensuremath{\,\overrightarrow{\operatorname{P}}_{#2}^{#1}}}
\newcommand{\proj}[1]{{\operatorname{P}\thinspace}_%
	{\negthinspace\negthinspace #1}}
\newcommand{\norm}[1]{\|#1\|}
\newcommand{\stdFace}{ \ensuremath{\mathcal{F}}}
\newcommand{\stdCone}{ \ensuremath{\mathcal{K}}}
\newcommand{\spanVec}{\ensuremath{\mathrm{span}\,}}
\newcommand{\stdSpace}{ \ensuremath{\mathcal{L}}}
\newcommand{\ambSpace}{\ensuremath{\mathcal{E}}}
\newcommand{\reInt}{\ensuremath{\mathrm{ri}\,}}
\newcommand{\expCone}{\ensuremath{K_{\exp}}}
\newcommand{\face}{\mathrel{\unlhd}}
\newcommand{\inProd}[2]{\langle #1 , #2 \rangle }
\newcommand{\comp}{\diamondsuit}
\numberwithin{equation}{section}
\newcommand{\hzz}{\ensuremath{\widehat{\mathbf z}}}
\newcommand{\hff}{\ensuremath{\widehat{\mathbf f}}}
\newcommand{\hpp}{\ensuremath{\widehat{\mathbf p}}}
\newcommand{\tzz}{\ensuremath{\widetilde{\mathbf z}}}
\newcommand{\tff}{\ensuremath{\widetilde{\mathbf f}}}
\newcommand{\tpp}{\ensuremath{\widetilde{\mathbf p}}}
\newcommand{\bv}{\ensuremath{\bar{\vv}}}
\newcommand{\amf}{\ensuremath{\mathfrak{g}}}
\newcommand{\samf}{\hat\amf}
\newcommand{\oneFRF}{{one-step facial residual function}}
\newcommand{\oneFRFs}{{$\mathds{1}$-FRF}}
\newcommand{\OneFRF}{{One-step facial residual function}}
\begin{document}

\title{Error bounds, facial residual functions and applications to the exponential cone}

\author{
Scott B.\ Lindstrom\thanks{
School of Electrical Engineering, Computing and Mathematical Sciences, Faculty of Science and Engineering,
Curtin University, Australia.
E-mail: \href{scott.lindstrom@curtin.edu.au}{scott.lindstrom@curtin.edu.au}.}
\and
Bruno F. Louren\c{c}o\thanks{Department of Statistical Inference and Mathematics, Institute of Statistical Mathematics, Japan.
	This author was supported partly by JSPS Grant-in-Aid for Early-Career Scientists 19K20217 and the Grant-in-Aid for Scientific Research (B)18H03206.
	Email: \href{bruno@ism.ac.jp}{bruno@ism.ac.jp}}
\and
Ting Kei Pong\thanks{
Department of Applied Mathematics, the Hong Kong Polytechnic University, Hong Kong, People's Republic of China.
This author was supported partly by Hong Kong Research Grants Council PolyU153003/19p.
E-mail: \href{tk.pong@polyu.edu.hk}{tk.pong@polyu.edu.hk}.
}
}

\date{\today}

\maketitle

\begin{abstract} \noindent
We construct a general framework for deriving error bounds for conic feasibility problems. In particular, our approach allows one to work with cones that fail to be amenable or even to have computable projections, two previously challenging  barriers. For the purpose, we first show how error bounds may be constructed using objects called \textit{one-step facial residual functions}. Then, we develop several tools to compute these facial residual functions even in the absence of closed form expressions for the  projections onto the cones.
%We show how to apply these methods with chains of faces, and arbitrary products of such cones.
We demonstrate the use and power of our results by computing tight error bounds for the exponential cone feasibility problem.
Interestingly, we discover a natural example for which
the tightest error bound is related to the Boltzmann-Shannon entropy. We were also able to produce an example of sets for which a H\"{o}lderian error bound holds but the supremum of the set of admissible exponents is not itself an admissible exponent.
\end{abstract}

{\small
%\noindent
%{\bfseries 2010 Mathematics Subject Classification:}
%Primary \color{red}TBA\color{black};
%Secondary  \color{red}TBA\color{black}.

\noindent
{\bfseries Keywords:}
Error bounds, facial residual functions, exponential cone, amenable cones, generalized amenability.
}

\section{Introduction}

Our main object of interest is the following convex conic feasibility problem:
\begin{align}
\text{find} & \quad \xx \in (\stdSpace + \abd) \cap \stdCone \label{eq:feas}\tag{Feas},
\end{align}
where $\stdSpace$ is a subspace contained in some finite-dimensional real Euclidean space $\ambSpace$, $\abd \in \ambSpace$ and $\stdCone \subseteq \ambSpace$ is a closed convex cone. For a discussion of some applications and algorithms for \eqref{eq:feas} see \cite{HM11}. See also \cite{BB96} for a broader analysis of convex feasibility problems.
We also recall that a \emph{conic linear program} (CLP) is the problem of minimizing a linear function subject to a constraint of the form described in \eqref{eq:feas}.
In addition, when the optimal set of a CLP is non-empty it can  be written as the intersection of a cone with an affine set.
This provides yet another motivation for analyzing \eqref{eq:feas}: to better understand feasible regions and optimal sets of conic linear programs.
Here, our main interest is in obtaining \emph{error bounds} for \eqref{eq:feas}. That is, assuming $(\stdSpace+\abd)\cap \stdCone\neq \emptyset$, we want an inequality that, given some arbitrary $\xx \in \ambSpace$, relates the \emph{individual distances} $\dist(\xx, \stdSpace+\abd), \dist(\xx,\stdCone)$ to the \emph{distance to the intersection} $\dist(\xx, (\stdSpace+\abd)\cap \stdCone)$. Considering that
$\ambSpace$ is equipped with some norm $\norm{\cdot}$ induced by some inner product $\inProd{\cdot}{\cdot}$, we recall that the \emph{distance function to a convex set $C$} is defined as follows:
\begin{equation*}
\dist(\xx, C) \coloneqq \inf _{\yy \in C} \norm{\xx-\yy}.
\end{equation*}

When $\stdCone$ is a polyhedral cone, the classical Hoffman's error bound \cite{HF52} gives a relatively complete picture of the way that the individual distances relate to the distance to the intersection.
If $\stdCone$ is not polyhedral, but $\stdSpace+\abd$ intersects $\stdCone$ in a sufficiently well-behaved fashion (say, for example, when $\stdSpace+\abd$ intersects $\reInt \stdCone$, the relative interior of $\stdCone$; see Proposition~\ref{prop:cq_er}), we may still expect ``good'' error bounds to hold, e.g., \cite[Corollary~3]{BBL99}. However, checking whether  $\stdSpace+\abd$ intersects $\reInt \stdCone$ is not necessarily a trivial task; and, in general, $(\stdSpace+\abd)\cap \reInt\stdCone$ can be void.

Here, we focus on error bound results that \emph{do not} require any assumption on the way that the affine space $\stdSpace+\abd$ intersects $\stdCone$. So, for example, we want results that are valid even if, say,
$\stdSpace+\abd$ fails to intersect the relative interior of $\stdCone$. %These results can be useful even in the
%case where $\stdSpace+\abd$ does intersect the relative interior of $\stdCone$, because it can be nontrivial to check that this is indeed the case.
Inspired by Sturm's pioneering work on error bounds for positive semidefinite systems \cite{St00}, the class of \emph{amenable cones} was proposed in \cite{L17} and it was shown that the following three ingredients can be used to obtain general error bounds for \eqref{eq:feas}: (i) amenable cones, (ii) facial reduction \cite{BW81,Pa13,WM13} and (iii) the so-called facial residual functions (FRFs) \cite[Definition~16]{L17}.

In this paper, we will show that, in fact, it is possible to obtain error bounds for \eqref{eq:feas} by using the so-called \emph{{\oneFRF}s} directly in combination with facial reduction.
It is fair to say that computing the facial residual functions is the most critical step in obtaining error bounds for \eqref{eq:feas}. We will demonstrate techniques that are readily adaptable for the purpose. %In particular, we introduce a generalization of amenability called \emph{$\frakg$-amenability} and discuss how it helps in the computation of the facial residual functions.

All the techniques discussed here will be showcased with error bounds for the so-called \emph{exponential cone} which is defined as follows\footnote{Our notation for the exponential cone coincides with the one in \cite{OCPB16}. However, the $x,y,z$ variables might appear permuted in other papers, e.g., \cite{SY15,MC2020,PY21}. }:
\begin{align*}
\expCone:=&\left \{(x,y,z)\in \RR^3\;|\;y>0,z\geq ye^{x/y}\right \} \cup \left \{(x,y,z)\;|\; x \leq 0, z\geq 0, y=0  \right \}.
\end{align*}
Put succinctly,  the exponential cone is the closure of the epigraph of the perspective function of $z=e^x$. It is quite useful in entropy optimization, see \cite{CS17}. Furthermore, it is also
implemented in the MOSEK package, see \cite{DE21}, \cite[Chapter 5]{MC2020}, and the many modelling examples in Section~5.4 therein. There are several other solvers that either support the exponential cone or convex sets closely related to it \cite{OCPB16,PY21,KT19,CKV21}. See also \cite{Fr21} for an algorithm for projecting onto the exponential cone. So convex optimization with exponential cones is widely available even if, as of this writing, it is not as widespread as, say, semidefinite programming.

The exponential cone $\expCone$ appears, at a glance, to be simple. However, it possesses a very intricate geometric structure that illustrates a wide range of challenges practitioners may face in computing error bounds. First of all, being non-facially-exposed, it is not amenable, so the theory developed in \cite{L17} does not directly apply to it. Another difficulty is that not many analytical tools have been developed to deal with the projection operator onto $\expCone$ (compared with, for example, the projection operator onto PSD cones) which is only implicitly specified.
%\footnote{Even if no implicit specification were known, our framework would still yield the desired error bounds, since it obviates \emph{any} need to compute projections onto the cone.}
Until now, these issues have made it
challenging the establishment of error bounds for objects like $\expCone$, many of which are of growing interest in the mathematical programming community.

Our research is at the intersection of two topics: \emph{error bounds} and the \emph{facial structure of cones}. General information on the former can be found, for example,  in \cite{Pang97,LP98}. Classically, there seems to be a focus on the so-called \emph{H\"olderian error bounds} (see also \cite{Li10,Li13,LMP15}) but we will see in this paper that non H\"olderian behavior can still appear even  in relatively natural settings such as conic feasibility problems associated to the exponential cone.

Facts on the facial structure of convex cones can be found, for example, in \cite{Ba73,Ba81,Pa00}. We recall that a cone is said to be \emph{facially exposed} if each face arises as the intersection of the whole cone with some supporting hyperplane. Stronger forms of facial exposedness have also been studied to some extent, here are some examples: projectional exposedness~\cite{BW81,ST90}, niceness~\cite{Pa13_2,Ve14}, tangential exposedness~\cite{RT19}, amenability \cite{L17}.
See also \cite{LRS20} for a comparison between a few different types of facial exposedness.
These notions are useful in many topics, e.g.:
regularization of convex programs and extended duals~\cite{BW81,Pa13,LiuPat18}, studying the closure of certain linear images~\cite{Pa07,LiuPat18}, lifts of convex sets~\cite{GPR13} and error bounds \cite{L17}.
However, as can be seen in Figure~\ref{fig:cone}, the exponential cone is not even a facially exposed cone, so none of the aforementioned notions apply (in particular, the face $\stdFace_{ne}:=\{(0,0,z)\;|\; z \geq 0\}$ is not exposed).
This was one of the motivations for looking beyond facial exposedness and developing a framework for deriving error bounds for feasibility problems associated to general closed convex cones.

\begin{figure}
	\begin{center}
		\begin{tikzpicture}
		[scale=0.76]
		\node[anchor=south west,inner sep=0] (image) at (0,0) {\includegraphics[width=.4\linewidth]{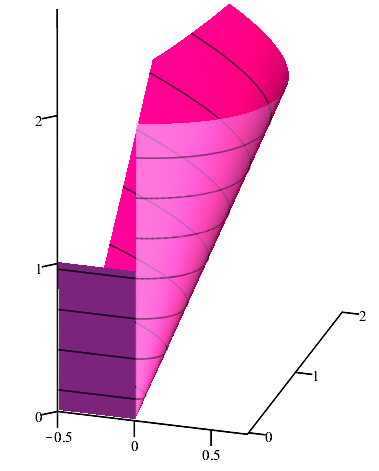}};
		\begin{scope}[x={(image.south east)},y={(image.north west)}]
		
		\node[black] at (0.65,0.025) {$x$};
		
		\draw[->,black] (0,0.29) -- (0.085,0.29);
		
		\node[black,left] at (0,0.29) {$\{(x,y,z) \;|\; x\leq 0,z\geq0,y=0 \}$};
		
		\node[black] at (0.9,0.37) {$y$};
		
		\draw[<-,black] (0.665,0.5) -- (0.75,0.5);
		
		\node[black,right] at (0.75,0.5) {$\{(x,y,z)\;|\; y>0, z \geq ye^{\frac{x}{y}} \} $};
		
		\node[black] at (0.06,0.9) {$z$};
		
		\end{scope}
		\end{tikzpicture}
	\end{center}
\caption{The exponential cone is the union of the two labelled sets}\label{fig:cone}
\end{figure}

\subsection{Outline and results}

The goal of this paper is to build a robust framework that may be used to obtain error bounds for previously inaccessible cones, and to demonstrate the use of this framework by applying it to fully describe error bounds for \eqref{eq:feas} with $\stdCone = \expCone$.

In Section~\ref{sec:preliminaries}, we recall preliminaries. New contributions begin in Section~\ref{sec:frf}. We first recall some rules for chains of faces and the diamond composition. Then we show how  error bounds may be constructed using objects known as {\oneFRF}. In Section~\ref{sec:frf_comp}, we build our general framework for constructing {\oneFRF}s. Our key result, Theorem~\ref{thm:1dfacesmain}, obviates the need of computing explicitly the projection onto the cone. %aforementioned cone projection barrier.
Instead, we make use of the parametrization of the boundary of the cone and projections onto the proper \emph{faces} of a cone: thus, our approach is advantageous when these projections are easier to analyze than the projection onto the whole cone itself. We emphasize that \textit{all} of the results of Section~\ref{sec:frf} are applicable to a \textit{general closed convex cone}.

In Section~\ref{sec:exp_cone}, we use our new framework to fully describe error bounds for \eqref{eq:feas} with $\stdCone = \expCone$. This was previously a problem lacking a clear strategy, because all projections onto $\expCone$ are implicitly specified. However, having obviated the need to project onto $\expCone$, we successfully obtain all the necessary FRFs, partly because it is easier to project onto the \emph{proper faces} of $\expCone$ than to project onto $\expCone$ itself. Surprisingly, we discover that different collections of faces and exposing hyperplanes admit very different FRFs. In Section~\ref{sec:freas_2d}, we show that for the unique 2-dimensional face, any exponent in $\left(0,1\right)$ may be used to build a valid FRF, while the supremum over all the admissible exponents \textit{cannot} be.
%As a surprising consequence, the exponential cone is, in a sense, arbitrarily close to being amenable, but it is \textit{not} an amenable cone (Proposition~\ref{prop:exp_am}).
Furthermore, a better FRF for the 2D face can be obtained  if we go beyond H\"olderian error bounds and consider a so-called \emph{entropic error bound} which uses a modified Boltzmann-Shannon entropy function, see Theorem~\ref{thm:entropic}.
%Notwithstanding, we obtain FRFs for \textit{even} the non-exposed face (Corollary~\ref{col:frf_ne}).
The curious discoveries continue; for infinitely many 1-dimensional faces, the FRF, and the final error bound, feature exponent $1/2$. For the final outstanding 1-dimensional exposed face, the FRF, and the final error bound, are Lipschitzian for all exposing hyperplanes except exactly one, for which \textit{no exponent} will suffice. However, for this exceptional case, our framework \textit{still} successfully finds an FRF, which is logarithmic in character (Corollary~\ref{col:1dfaces_infty}). Consequentially, the system consisting of $\{(0,0,1)\}^\perp$ and $\expCone$ possesses a kind of ``logarithmic error bound" (see Example~\ref{ex:non_hold}) instead of a H\"olderian error bound. In Theorems~\ref{thm:main_err} and \ref{theo:sane}, we give explicit error bounds by using our FRFs and the  suite of tools we developed in Section~\ref{sec:frf}. We also show that the error bound given in Theorem~\ref{thm:main_err} is tight, see Remark~\ref{rem:opt}.

These findings about the exponential cone are surprising,  since we are not aware of other objects having this litany of odd behaviour hidden in its structure all at once.\footnote{To be fair, the exponential function is the classical example of a non-semialgebraic analytic function. Given that semialgebraicity is connected to the KL property (which is related to error bounds), one may argue that it is not \emph{that} surprising that the exponential cone has its share of quirks. Nevertheless, given how natural the exponential cone is, the  amount of quirks   is still somewhat surprising.}
%To the best of our knowledge, these findings about the exponential cone were not reported previously.
One possible reason for the absence of previous reports on these phenomena might have been the sheer absence of tools for obtaining error bounds for general cones.
In this sense, we believe that the machinery developed in Section~\ref{sec:frf} might be a reasonable first step towards filling this gap. %and, in fact, it was quite suitable for our purposes.
In Section~\ref{sec:odd}, we document additional odd consequences and connections to other concepts, with particular relevance to the \emph{Kurdyka-{\L}ojasiewicz (KL) property} \cite{BDL07,BDLS07,ABS13,ABRS10,BNPS17,LP18}. In particular, we have two sets satisfying a H\"olderian error bound for every $\gamma \in \left(0,1\right)$ but the supremum of allowable exponents is not allowable. Consequently, one obtains a function with the $KL$ property with exponent $\alpha$ for any $\alpha \in \left(1/2,1\right)$ at the origin, but not for $\alpha = 1/2$. We conclude in Section~\ref{sec:conclusion}.

\section{Preliminaries}\label{sec:preliminaries}
We recall that $\ambSpace$ denotes an arbitrary
finite-dimensional real Euclidean space. We will adopt the following convention, vectors will be boldfaced while scalars will use normal typeface. For example, if $\pp \in \RR^3$, we write $\pp = (p_x,p_y,p_z)$, where
$p_x,p_y,p_z \in \RR$.

We denote by $B(\eta)$ the closed ball of radius $\eta$ centered at the origin, i.e., $B(\eta) = \{\xx\in\ambSpace \mid \norm{\xx} \leq \eta\}$.
Let $C\subseteq \ambSpace$ be a convex
set. We denote the relative interior and the linear span of $C$ by $\reInt C$ and $\spanVec C$, respectively. We also denote the boundary of $C$ by $\partial C$, and $\mathrm{cl}\, C$ is the closure of $C$.
We denote the projection operator onto $C$ by $P_C$, so that
$P_C(\xx) = \argmin_{\yy \in C} \norm{\xx-\yy}$.
Given closed convex sets $C_1,C_2 \subseteq \ambSpace$, we note the following properties of the projection operator
\begin{align}
\dist(\xx,C_1) &\leq \dist(\xx,C_2) + \dist(P_{C_2}(\xx),C_1) \label{proj:p1}\\
\dist(P_{C_2}(\xx),C_1) &\leq \dist(\xx,C_2) + \dist(\xx,C_1). \label{proj:p2}
\end{align}

\subsection{Cones and their faces}
Let $\stdCone$ be a closed convex cone. We say that $\stdCone$ is \emph{pointed} if $\stdCone \cap - \stdCone = \{\mathbf{0}\}$. The dimension of $\stdCone$ is denoted by $\dim(\stdCone)$ and is the dimension of the linear subspace spanned by $\stdCone$.
A \emph{face} of $\stdCone$ is a closed convex cone $\stdFace$ satisfying $\stdFace \subseteq \stdCone$ and the following property
\[
\xx,\yy \in \stdCone, \xx+\yy \in \stdFace \Rightarrow \xx,\yy \in \stdFace.
\]
In this case, we write $\stdFace \face \stdCone$.
We say that $\stdFace$ is \emph{proper} if $\stdFace \neq \stdCone$.
A face is said to be \emph{nontrivial} if $\stdFace \neq \stdCone$ and  $\stdFace \neq \stdCone \cap - \stdCone$. In particular, if $\stdCone$ is pointed (as is the case of the exponential cone), a nontrivial face is neither $\stdCone$ nor $\{\mathbf{0}\}$.
Next, let $\stdCone^*$ denote the dual cone of $\stdCone$, i.e., $\stdCone^* = \{\zz \in \ambSpace \mid \inProd{\xx}{\zz} \geq 0, \forall \xx \in \stdCone \}$. We say that $\stdFace$ is an \emph{exposed face} if there exists $\zz \in \stdCone^*$ such that
$\stdFace = \stdCone \cap \{\zz\}^\perp$.

A \emph{chain of faces} of $\stdCone$ is a sequence of faces satisfying $\stdFace_\ell \subsetneq \cdots \subsetneq \stdFace_{1}$ such that each $\stdFace_{i}$ is a face of $\stdCone$ and the inclusions $\stdFace _{i+1} \subsetneq \stdFace_{i}$ are all proper. The length of the chain is defined to be $\ell$.
With that, we define the \emph{distance to polyhedrality of $\stdCone$} as the length {\em minus one} of the longest chain of faces of $\stdCone$ such that $\stdFace _{\ell}$ is polyhedral and $\stdFace_{i}$ is not polyhedral for $i < \ell$, see \cite[Section~5.1]{LMT18}. We denote the distance to polyhedrality by $\distP(\stdCone)$.

\subsection{Lipschitzian and H\"olderian error bounds}
In this subsection, suppose that $C_1,\ldots, C_{\ell} \subseteq \ambSpace$ are convex sets with nonempty intersection.
We recall the following definitions.
\begin{definition}[H\"olderian and Lipschitzian error bounds]\label{def:hold}
	The sets $C_1,\ldots, C_\ell$ are said to satisfy a \emph{H\"olderian error bound} if for every bounded set $B \subseteq \ambSpace$ there exist some $\kappa_B > 0$ and an exponent $\gamma _B\in(0, 1]$ such that
	\begin{equation*}
	\dist(\xx, \cap _{i=1}^\ell C_i) \le \kappa_B\max_{1\le i\le \ell}\dist(\xx, \, C_i)^{\gamma_B}, \qquad \forall\ \xx\in B.
	\end{equation*}
	If we can take the same $\gamma _B = \gamma \in (0,1]$ for all $B$, then we say
	that the bound is \emph{uniform}. If the bound is uniform with $\gamma = 1$, we call it a \emph{Lipschitzian error bound}.
\end{definition}
We note that the concepts in Definition~\ref{def:hold} also have different names throughout the literature. When $C_1,\ldots, C_\ell$ satisfy a H\"olderian error bound it is said that they satisfy \emph{bounded H\"older regularity}, e.g., see \cite[Definition~2.2]{BLT17}. When a Lipschitzian error bound holds, $C_1,\ldots, C_\ell$ are said to satisfy \emph{bounded linear regularity}, see \cite[Section~5]{BB96} or \cite{BBL99}. Bounded linear regularity is also closely related to the notion of \emph{subtransversality} \cite[Definition~7.5]{Io17}.

H\"olderian and Lipschitzian error bounds will appear frequently in our results, but we also encounter non-H\"olderian bounds as in Theorem~\ref{thm:entropic} and Theorem~\ref{thm:nonzerogammasec72}.
Next, we recall the following result which ensures
a Lipschitzian error bound  holds between families of
convex sets when a constraint qualification is satisfied.

\begin{proposition}[\!{\cite[Corollary~3]{BBL99}}]\label{prop:cq_er}
Let $C_1,\ldots, C_{\ell} \subseteq \ambSpace$ be convex sets such that
$C_{1},\ldots, C_{k}$ are polyhedral. If
\begin{equation*}
\left(\bigcap _{i=1}^k C_i\right) \bigcap \left(\bigcap _{j=k+1}^\ell \reInt C_j\right) \neq \emptyset,
\end{equation*}
then for every bounded set $B$ there exists $\kappa _B>0$ such that
\[
\dist(\xx, \cap _{i=1}^\ell C_i) \leq \kappa _B(\max_{1 \leq i \leq \ell} \dist(\xx, C_i)), \qquad \forall \xx \in B.
\]
\end{proposition}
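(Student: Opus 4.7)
The plan is to split the family into its polyhedral and non-polyhedral parts, apply a Hoffman bound to the former and a Slater-type regularity argument to the latter, and then glue the two halves together via a third bound that exploits the polyhedrality of one of the two pieces.

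Set $P := \bigcap_{i=1}^{k} C_i$ and $Q := \bigcap_{j=k+1}^{\ell} C_j$, so that $\bigcap_{i=1}^{\ell} C_i = P\cap Q$. Then $P$ is polyhedral (intersection of finitely many polyhedra), and by the standard relative-interior calculus, the hypothesis ensures $\reInt Q = \bigcap_{j=k+1}^{\ell}\reInt C_j$ and $P\cap \reInt Q \neq \emptyset$. Hoffman's lemma applied to $P$ immediately gives a $B$-independent constant $\alpha>0$ with
\[
\dist(\xx,P)\;\leq\;\alpha\max_{1\le i\le k}\dist(\xx,C_i),\qquad\forall\,\xx\in\ambSpace.
\]

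Next, I would argue by induction on $\ell-k$ that any finite family of closed convex sets whose relative interiors share a common point $\hat{\xx}$ is boundedly linearly regular. The inductive step rests on the classical Slater observation: on a small ball around $\hat{\xx}$, the projections onto any two sets $A, D$ in the family lie inside $\aff A$ and $\aff D$, and an open-mapping/transversality argument applied to the linear map $(\mathbf{a},\mathbf{d})\mapsto \mathbf{a}-\mathbf{d}$ yields a local Lipschitz bound. This local bound then propagates to any bounded set via a compactness and convex-recession argument. Applied to $\{C_{k+1},\ldots,C_{\ell}\}$, this produces for each bounded $B_0\subseteq\ambSpace$ a constant $\beta_{B_0}>0$ with
\[
\dist(\xx,Q)\;\leq\;\beta_{B_0}\max_{k+1\le j\le\ell}\dist(\xx,C_j),\qquad\forall\,\xx\in B_0.
\]

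The main obstacle is the third ingredient: bounded linear regularity of the pair $\{P,Q\}$ under the weaker hypothesis $P\cap\reInt Q\neq\emptyset$, since $\reInt P$ need not meet $\reInt Q$ and the pure Slater argument fails. Here I would exploit the finite face structure of the polyhedron $P$. Partitioning $P$ into the relative interiors of its faces, one fixes any face $F\face P$ meeting $\reInt Q$ and works on a neighborhood of $F\cap\reInt Q$: the rigid polyhedral constraints defining $F$ are Lipschitz-regular by Hoffman on the $P$-side, while the directions transverse to $\aff F$ are absorbed at a uniform rate by $\dist(\cdot,Q)$ because $F\cap\reInt Q$ lies in $\reInt Q$. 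Taking a maximum over the finitely many faces and using compactness of $B$ produces $\gamma_B>0$ with
\[
\dist(\xx,P\cap Q)\;\leq\;\gamma_B\bigl(\dist(\xx,P)+\dist(\xx,Q)\bigr),\qquad\forall\,\xx\in B.
\]

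Chaining the three bounds with $\kappa_B:=\gamma_B(\alpha+\beta_B)$ completes the proof, where $\beta_B$ is the constant produced in the second step when $B_0=B$. The third step is where the real work lies; the first is immediate from Hoffman and the second is a largely routine Slater calculation carried out inductively.
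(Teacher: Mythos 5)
The paper does not prove this proposition; it is quoted verbatim from \cite[Corollary~3]{BBL99}, so there is no in-paper argument to compare against. Your three-step decomposition (Hoffman for the polyhedral block $P=\bigcap_{i\le k}C_i$, Slater-type bounded linear regularity for $Q=\bigcap_{j>k}C_j$, and then a ``gluing'' bound for the pair $\{P,Q\}$ followed by chaining) is the natural reduction, and the chaining arithmetic at the end is correct. Steps~1 and~2 assert known facts, though step~2's open-mapping/induction sketch is itself only gestured at; more importantly, you have correctly identified step~3 as the crux, and that is exactly where the proposal breaks down.

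The step-3 argument, as written, does not cover the region where it is needed. You propose to ``fix any face $F\face P$ meeting $\reInt Q$'' and work on a neighborhood of $F\cap\reInt Q$, then take a maximum over faces. But the union of such neighborhoods does not cover $P\cap Q$: a face $F$ of $P$ can satisfy $F\cap Q\neq\emptyset$ while $F\cap\reInt Q=\emptyset$. For instance with $P=\RR^2_+$ and $Q=\{(x,y):x\ge 0\}$ one has $P\cap\reInt Q\neq\emptyset$, yet the face $F=\{(0,y):y\ge 0\}$ of $P$ meets $Q$ but not $\reInt Q$; points of $B$ near this face are precisely the ones your local argument never reaches, and ``compactness of $B$'' cannot rescue an incomplete open cover. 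Moreover, even for the faces you do cover, the assertion that ``directions transverse to $\aff F$ are absorbed at a uniform rate by $\dist(\cdot,Q)$'' is not argued at all; that uniform rate is the entire content of the result. The actual route in \cite{BBL99} does not proceed face-by-face; it establishes the strong conical hull intersection property (strong CHIP) for $\{P,Q\}$ under the hypothesis $P\cap\reInt Q\neq\emptyset$ and then deduces bounded linear regularity from strong CHIP via a duality/property-(G) argument. If you want to complete your step~3 without importing that machinery, you would at minimum need to (a) show that every $\bar\xx\in P\cap Q\cap B$ admits a neighborhood on which local linear regularity holds, with $\bar\xx$ allowed to lie on \emph{any} face of $P$, and (b) justify the uniform transverse absorption quantitatively; as it stands, the gap is genuine.
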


In view of \eqref{eq:feas}, we say that \emph{Slater's condition} is satisfied
if $(\stdSpace + \abd) \cap \reInt\stdCone \neq \emptyset$.
If $\stdCone$ can be written as $\stdCone^1 \times \stdCone^2\subseteq \ambSpace^1 \times \ambSpace^2$, where $\ambSpace^1$ and $\ambSpace^2$ are real Euclidean spaces and
$\stdCone^1\subseteq \ambSpace^1$ is polyhedral, we say that the \emph{partial polyhedral Slater's (PPS) condition} is satisfied if
\begin{equation}\label{eq:ppsc}
(\stdSpace + \abd) \cap (\stdCone^1 \times (\reInt\stdCone^2) ) \neq \emptyset.
\end{equation}
Adding a dummy coordinate, if necessary, we can see Slater's condition as a particular case of the PPS condition.
By convention, we consider that the PPS condition is satisfied for \eqref{eq:feas} if one of the following is satisfied:
 1) $\stdSpace+\abd$ intersects $\reInt\stdCone$; 2) $(\stdSpace+\abd)\cap \stdCone\neq \emptyset$ and $\stdCone$ is polyhedral; or 3) $\stdCone$ can be written as a direct product $\stdCone^1 \times \stdCone^2$   where $\stdCone^1$ is polyhedral and \eqref{eq:ppsc} is satisfied.

Noting that $(\stdSpace + \abd) \cap (\stdCone^1 \times (\reInt\stdCone^2) ) = (\stdSpace + \abd)\cap (\stdCone^1 \times \ambSpace^2) \cap (\ambSpace^1 \times (\reInt\stdCone^2) )$, we deduce the following result from Proposition~\ref{prop:cq_er}.
\begin{proposition}[Error bound under PPS condition]\label{prop:pps_er}
Suppose that \eqref{eq:feas} satisfies the 	\emph{partial polyhedral Slater's condition}. Then, for every bounded set $B$ there exists $\kappa _B>0$ such that
\[
\dist(\xx, (\stdSpace+\abd)\cap \stdCone) \leq \kappa _B \max\{\dist(\xx,\stdCone),\dist(\xx,\stdSpace+\abd)\}, \qquad \forall \xx \in B.
\]
\end{proposition}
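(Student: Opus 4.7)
The plan is to apply Proposition~\ref{prop:cq_er} to a suitable triple of convex sets, two of which are polyhedral, and then to reconcile the resulting bound with the distances appearing in the statement. I would split the argument according to the three clauses of the PPS convention. The first two clauses (Slater with $\stdCone$ arbitrary, or $(\stdSpace+\abd)\cap\stdCone\neq\emptyset$ with $\stdCone$ polyhedral) fall out immediately from Proposition~\ref{prop:cq_er} applied to $C_1=\stdSpace+\abd$ (affine, hence polyhedral) and $C_2=\stdCone$. The interesting case is the product one, which I treat next.

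Assume $\stdCone=\stdCone^1\times\stdCone^2\subseteq\ambSpace^1\times\ambSpace^2$ with $\stdCone^1$ polyhedral and \eqref{eq:ppsc} holding. Set
\[
C_1 := \stdSpace+\abd,\qquad C_2 := \stdCone^1\times\ambSpace^2,\qquad C_3 := \ambSpace^1\times\stdCone^2.
\]
Then $C_1$ and $C_2$ are polyhedral. Using $\reInt(\ambSpace^1\times\stdCone^2)=\ambSpace^1\times\reInt\stdCone^2$ together with the identity highlighted just before the statement, the PPS condition rewrites as
\[
C_1\cap C_2\cap\reInt C_3 = (\stdSpace+\abd)\cap(\stdCone^1\times\reInt\stdCone^2)\neq\emptyset.
\]
Proposition~\ref{prop:cq_er} therefore supplies, for each bounded $B\subseteq\ambSpace$, a constant $\kappa_B>0$ with
\[
\dist(\xx,C_1\cap C_2\cap C_3)\le\kappa_B\max_{1\le i\le3}\dist(\xx,C_i),\qquad\forall\,\xx\in B.
\]

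To finish, I would observe that $C_1\cap C_2\cap C_3=(\stdSpace+\abd)\cap(\stdCone^1\times\stdCone^2)=(\stdSpace+\abd)\cap\stdCone$, so the left-hand side is exactly the target quantity. On the right-hand side, since $\stdCone\subseteq C_2$ and $\stdCone\subseteq C_3$ one has $\dist(\xx,C_2)\le\dist(\xx,\stdCone)$ and $\dist(\xx,C_3)\le\dist(\xx,\stdCone)$, while $\dist(\xx,C_1)=\dist(\xx,\stdSpace+\abd)$. Hence $\max_i\dist(\xx,C_i)\le\max\{\dist(\xx,\stdCone),\dist(\xx,\stdSpace+\abd)\}$, which yields the claimed inequality with the same $\kappa_B$.

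The only delicate point is making sure the three clauses of the PPS convention are genuinely covered; once the product case is set up the rest is bookkeeping, and the key input (polyhedrality of two of the sets plus relative-interior intersection with the third) is the precise hypothesis of Proposition~\ref{prop:cq_er}. I do not foresee any substantive obstacle beyond verifying these inclusions.
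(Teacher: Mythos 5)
Your proposal is correct and takes essentially the same route as the paper: the paper deduces the proposition directly from Proposition~\ref{prop:cq_er} via the identity $(\stdSpace + \abd) \cap (\stdCone^1 \times \reInt\stdCone^2) = (\stdSpace + \abd)\cap (\stdCone^1 \times \ambSpace^2) \cap (\ambSpace^1 \times \reInt\stdCone^2)$, which is exactly your choice of $C_1, C_2, C_3$. You have simply spelled out the bookkeeping (the three PPS clauses and the inequalities $\dist(\xx,C_2)\le\dist(\xx,\stdCone)$, $\dist(\xx,C_3)\le\dist(\xx,\stdCone)$) that the paper leaves implicit.
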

We recall that for $a,b \in \RR_+$ we have $a+b \leq 2\max\{a,b\} \leq 2(a+b)$, so Propositions~\ref{prop:cq_er} and \ref{prop:pps_er} can also be equivalently stated in terms of sums of distances.

\section{Facial residual functions and error bounds}\label{sec:frf}
In this section, we discuss a strategy for obtaining error
bounds for the conic linear system \eqref{eq:feas} based on
the so-called \emph{facial residual functions} that
were introduced in \cite{L17}. In contrast to \cite{L17}, we will not require that $\stdCone$ be amenable.

The motivation for our approach is as follows.
If it were the case that \eqref{eq:feas} satisfies some constraint qualification, we would have a Lipschitizian error bound per Proposition~\ref{prop:pps_er}, see also \cite{BBL99} for other sufficient conditions. Unfortunately, this does not happen in general. However, as long as
\eqref{eq:feas} is feasible, there is always a face of $\stdCone$ that contains the feasible region of \eqref{eq:feas} and for which a constraint qualification holds. The error bound computation essentially boils down to understanding how to compute the distance to this special face.
The first result towards our goal is the following.

\begin{proposition}[An error bound when a face satisfying a CQ is known]\label{prop:err_cq2}
	Suppose that \eqref{eq:feas} is feasible and let $\stdFace \face \stdCone$ be a face such that
	\begin{enumerate}[{\rm (a)}]
		\item $\stdFace$ contains $\stdCone \cap (\stdSpace+\abd)$.
		\item $\{\stdFace, \stdSpace+\abd\}$ satisfies the PPS condition.\footnote{As a reminder, the PPS condition is, by convention, a shorthand for three closely related conditions, see remarks after \eqref{eq:ppsc}.}
	\end{enumerate}
	Then, for every bounded set $B$, there exists $\kappa_B > 0$ such that
	\[
	\dist(\xx, \stdCone \cap (\stdSpace + \abd)) \leq \kappa_B(\dist(\xx, \stdFace) + \dist(\xx, \stdSpace+\abd)), \qquad \forall \xx \in B.
	\]
\end{proposition}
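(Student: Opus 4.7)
The plan is to reduce the claim to a direct application of Proposition~\ref{prop:pps_er}. The key observation is that hypothesis (a), together with $\stdFace \subseteq \stdCone$, forces the equality of feasible sets
\[
\stdFace \cap (\stdSpace + \abd) \;=\; \stdCone \cap (\stdSpace + \abd),
\]
so distance to the original intersection can be replaced by distance to the intersection of $\stdFace$ with the affine subspace.

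Then I would invoke Proposition~\ref{prop:pps_er} with $\stdCone$ replaced by $\stdFace$ (which is legitimate since $\stdFace$ is itself a closed convex cone and, by hypothesis (b), the pair $\stdFace, \stdSpace+\abd$ satisfies the PPS condition). This yields, for every bounded set $B$, some $\kappa_B'>0$ with
\[
\dist(\xx, \stdFace \cap (\stdSpace+\abd)) \;\leq\; \kappa_B'\,\max\{\dist(\xx,\stdFace),\dist(\xx,\stdSpace+\abd)\}, \qquad \forall\,\xx \in B.
\]
Combining the displayed equality of feasible sets with the elementary bound $\max\{a,b\}\leq a+b$ and setting $\kappa_B := \kappa_B'$ finishes the proof.

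There is no real obstacle here: the only thing one must check carefully is that hypothesis (a), which is stated as an inclusion in one direction, actually upgrades to set equality (the reverse inclusion being automatic from $\stdFace \face \stdCone$). After that the argument is pure bookkeeping on top of Proposition~\ref{prop:pps_er}. The substance of the result is really the identification of such a face $\stdFace$ in applications; Proposition~\ref{prop:err_cq2} just packages the consequence of knowing one.
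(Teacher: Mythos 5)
Your proof is correct and is essentially the same as the paper's: both convert hypothesis (a) into the set equality $\stdCone \cap (\stdSpace+\abd) = \stdFace \cap (\stdSpace+\abd)$ and then apply Proposition~\ref{prop:pps_er} to the pair $\stdFace, \stdSpace+\abd$, with the trivial bound $\max\{a,b\}\le a+b$ giving the stated form. Nothing is missing.
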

\begin{proof}
	Since $\stdFace$ is a face of $\stdCone$, assumption (a) implies
	$\stdCone \cap (\stdSpace + \abd) = \stdFace \cap (\stdSpace+\abd)$. Then,
	the result follows from  assumption (b) and Proposition~\ref{prop:pps_er}.
\end{proof}
From Proposition~\ref{prop:err_cq2} we see that the key to obtaining
an error bound for the system \eqref{eq:feas} is to find a face $\stdFace \face \stdCone$ satisfying (a), (b) \emph{and} we must know how to
estimate the quantity $\dist(\xx, \stdFace)$ from the available information
$\dist(\xx, \stdCone)$ and $\dist(\xx, \stdSpace+\abd)$.

This is where we will make use of facial reduction and facial residual functions. The former will help us find $\stdFace$ and the latter will be instrumental in upper bounding $\dist(\xx, \stdFace)$.
First, we recall below a result that follows from the analysis of the FRA-poly facial reduction algorithm developed in \cite{LMT18}.
\begin{proposition}[{\cite[Proposition~5]{L17}}\footnote{Although {\cite[Proposition~5]{L17}} was originally stated for pointed cones, it holds for general closed convex cones. Indeed, its proof only relies on {\cite[Proposition~8]{LMT18}}, which holds for general closed convex cones.}]\label{prop:fra_poly}
	Let $\stdCone = \stdCone^1\times \cdots \times \stdCone^s$, where
	each $\stdCone^i$ is a closed convex cone. Suppose \eqref{eq:feas} is feasible.
	Then there is a chain of faces
	\begin{equation}\label{eq:chain}
	\stdFace _{\ell}  \subsetneq \cdots \subsetneq \stdFace_1 = \stdCone
	\end{equation}
	of length $\ell$ and vectors $\{\zz_1,\ldots, \zz_{\ell-1}\}$ satisfying the following properties.
	\begin{enumerate}[{\rm (i)}]
		\item \label{prop:fra_poly:1} $\ell -1\leq  \sum _{i=1}^{s} \distP(\stdCone ^i)  \leq \dim{\stdCone}$.
		\item \label{prop:fra_poly:2} For all $i \in \{1,\ldots, \ell -1\}$, we have
		\begin{flalign*}%
		\zz_i \in \stdFace _i^* \cap \stdSpace^\perp \cap \{\abd\}^\perp \ \ \ {and}\ \ \
		\stdFace _{i+1} = \stdFace _{i} \cap \{\zz_i\}^\perp.
		\end{flalign*}		
		\item \label{prop:fra_poly:3} $\stdFace _{\ell} \cap (\stdSpace+\abd) = \stdCone \cap (\stdSpace + \abd)$ and 	$\{\stdFace _{\ell},\stdSpace+\abd\}$ satisfies the PPS condition.
	\end{enumerate}
\end{proposition}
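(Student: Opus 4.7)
The plan is to build the chain iteratively via facial reduction, essentially repackaging the FRA-poly algorithm of \cite{LMT18}. Initialize $\stdFace_1 := \stdCone$. At step $i$, either $(\stdFace_i, \stdSpace+\abd)$ already satisfies the PPS condition (in which case we stop with $\ell = i$) or it does not. In the latter case, invoke \cite[Proposition~8]{LMT18} to produce a reducing direction $\zz_i \in \stdFace_i^* \cap \stdSpace^\perp \cap \{\abd\}^\perp$ with $\stdFace_i \not\subseteq \{\zz_i\}^\perp$, and set
\[
\stdFace_{i+1} := \stdFace_i \cap \{\zz_i\}^\perp,
\]
which is a proper face of $\stdFace_i$ by construction. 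This delivers (ii) immediately.

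For the first part of (iii), observe inductively that $\stdCone \cap (\stdSpace+\abd) \subseteq \stdFace_i$ for every $i$: given $\xx \in \stdCone \cap (\stdSpace+\abd) \subseteq \stdFace_i$, the relations $\zz_i \in \stdSpace^\perp \cap \{\abd\}^\perp$ force $\langle \zz_i, \xx \rangle = 0$, so $\xx \in \stdFace_{i+1}$. Since the process stops exactly when PPS is attained, the second half of (iii) holds at termination $\stdFace_\ell$.

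The bulk of the work is the length bound in (i). This is the main structural conclusion of FRA-poly: each reducing step either removes a non-polyhedral layer from some factor $\stdCone^j$ (i.e.\ strictly decreases the distance to polyhedrality of the projection of $\stdFace_i$ onto $\ambSpace^j$ for some $j$), or can be concatenated with a single ``polyhedral'' reduction that terminates the algorithm. Summed across factors, this yields $\ell - 1 \le \sum_{i=1}^s \distP(\stdCone^i)$. The second inequality is elementary: a chain of faces of $\stdCone^i$ has length at most $\dim \stdCone^i + 1$, so $\distP(\stdCone^i) \le \dim \stdCone^i$ and therefore $\sum_i \distP(\stdCone^i) \le \dim \stdCone$.

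The main obstacle, and the reason the footnote is needed, is verifying that \cite[Proposition~5]{L17} really extends from pointed cones to arbitrary closed convex cones. The only place pointedness could intervene is in the choice of reducing directions and in the counting argument. However, \cite[Proposition~8]{LMT18} is stated for general closed convex cones, and the distance-to-polyhedrality count depends only on chains of faces, which are well-defined regardless of pointedness (the lineality space $\stdCone \cap -\stdCone$ sits inside every face and is annihilated by every element of $\stdCone^*$, so it never obstructs the construction). Hence the argument carries through verbatim and the proposition follows.
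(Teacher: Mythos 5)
The paper does not actually prove this proposition: it is stated as a citation of \cite[Proposition~5]{L17}, with a footnote justifying the extension from pointed cones to general closed convex cones on the grounds that the original proof rests only on \cite[Proposition~8]{LMT18}, which already covers the general case. So there is no ``paper proof'' to compare against; what you have done is reconstruct the argument behind the cited result.

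Your reconstruction is reasonable in outline. The facial reduction loop, the verification of (ii), the easy half of (iii) (note the converse inclusion $\stdFace_\ell \cap (\stdSpace+\abd) \subseteq \stdCone \cap (\stdSpace+\abd)$ is immediate from $\stdFace_\ell \subseteq \stdCone$, which you did not spell out but is trivial), and the observation that termination plus \cite[Proposition~8]{LMT18} yields the PPS condition at $\stdFace_\ell$ are all correct. Your paragraph on why pointedness is not needed makes the same point as the paper's footnote and is sound: the lineality space lies in every face and is annihilated by $\stdCone^*$, so it never interferes.

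The weak spot, which you flag yourself, is item (i). Your description (``each reducing step either removes a non-polyhedral layer from some factor\ldots or can be concatenated with a single `polyhedral' reduction that terminates the algorithm'') is a heuristic gloss, not an argument. The actual FRA-poly accounting that produces $\ell - 1 \le \sum_i \distP(\stdCone^i)$ requires tracking, for the block decomposition of each $\stdFace_i$, how the non-polyhedral blocks shed distance-to-polyhedrality, and treating the aggregate of polyhedral blocks and the affine space together under the PPS condition. Simply asserting that each step ``strictly decreases the distance to polyhedrality of the projection of $\stdFace_i$ onto $\ambSpace^j$ for some $j$'' is not obviously true for an arbitrary reducing direction produced by \cite[Proposition~8]{LMT18}; FRA-poly has to choose or adjust the reducing direction so that this bookkeeping works. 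Since the paper itself delegates this to the citations, your deferral is acceptable, but be aware that if you were asked to prove this from scratch, (i) is where a genuine argument would have to be supplied rather than summarized.
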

In view of Proposition~\ref{prop:fra_poly}, we define the \emph{distance to the PPS condition} $\dpp(\stdCone,\stdSpace+\abd)$ as the length \emph{minus one} of the shortest chain of faces (as in \eqref{eq:chain}) satisfying item (iii) in Proposition~\ref{prop:fra_poly}. For example, if \eqref{eq:feas} satisfies the PPS condition, we have
$\dpp(\stdCone,\stdSpace+\abd) = 0$.

Next, we recall the definition of facial residual functions from \cite[Definition~16]{L17}.

\begin{definition}[Facial residual function\footnote{The only difference between Definition~\ref{def:ebtp} and the definition of facial residual functions in {\cite[Definition~16]{L17}} is that we added the ``with respect to $\stdCone$'' part, to emphasize the dependency on $\stdCone$.}]\label{def:ebtp}
	Let $\stdCone$ be a closed convex cone, $\stdFace \face \stdCone$ be a face, and let $\zz \in \stdFace^*$.
	Suppose that $\psi_{\stdFace,\zz} : \RP \times \RP \to \RP$ satisfies the following properties:
	\begin{enumerate}[label=({\roman*})]
		\item $\psi_{\stdFace,\zz}$ is nonnegative, monotone nondecreasing in each argument and $\psi_{\stdFace,\zz}(0,t) = 0$ for every $t \in \RP$.
		\item The following implication holds for any $\xx \in \spanVec \stdCone$ and any $\epsilon \geq 0$:
\[
\dist(\xx,\stdCone) \leq \epsilon, \quad \inProd{\xx}{\zz} \leq \epsilon, \quad \dist(\xx, \spanVec \stdFace ) \leq \epsilon \quad \Rightarrow \quad \dist(\xx,  \stdFace \cap \{\zz\}^{\perp})  \leq \psi_{\stdFace,\zz} (\epsilon, \norm{\xx}).
\]
	\end{enumerate}
	Then, $\psi_{\stdFace,\zz}$ is said to be \emph{a facial residual function for $\stdFace$ and $\zz$ with respect to $\stdCone$}.
\end{definition}
Definition~\ref{def:ebtp}, in its most general form, represents ``two-steps'' along the facial structure of a cone: we have a cone $\stdCone$, a face $\stdFace$ (which could be different from $\stdCone$) and a third face defined by $\stdFace \cap \{\zz\}^\perp$.
In this work, however, we  will be focused on the following special case of Definition~\ref{def:ebtp}.
\begin{definition}[{\OneFRF} ({\oneFRFs})]\label{def:onefrf}
Let $\stdCone$ be a closed convex cone and $\zz \in \stdCone^*$.
A function $\psi _{\stdCone,\zz}:\RP \times \RP \to \RP$ is called a \emph{{\oneFRF} (\oneFRFs) for $\stdCone$ and $\zz$} if it is a facial residual function of $\stdCone$ and $\zz$ with respect to $\stdCone$. That is, $\psi _{\stdCone,\zz}$ satisfies item (i) of Definition~\ref{def:ebtp} and for every $\xx \in \spanVec \stdCone$ and any $\epsilon \geq 0$:
\[
\dist(\xx,\stdCone) \leq \epsilon, \quad \inProd{\xx}{\zz} \leq \epsilon \quad \Rightarrow \quad \dist(\xx,  \stdCone \cap \{\zz\}^{\perp})  \leq \psi_{\stdCone,\zz} (\epsilon, \norm{\xx}).
\]

\end{definition}

\begin{remark}[Concerning the implication in {Definition~\ref{def:onefrf}}]
In view of the monotonicity of $\psi_{\stdCone,\zz}$, the implication in Definition~\ref{def:onefrf} can be equivalently and more succinctly written as
  \[
  \dist(\xx, \stdCone\cap\{\zz\}^\perp) \le \psi_{\stdCone,\zz}(\max\{\dist(\xx,\stdCone), \inProd{\xx}{\zz}\}, \norm{\xx}),\ \ \forall \xx \in \spanVec \stdCone.
  \]
  The {\em unfolded} form presented in Definition~\ref{def:onefrf} is more handy in our discussions and analysis below.
\end{remark}

Facial residual functions always exist (see \cite[Section~3.2]{L17}), but their computation is often nontrivial. Next, we review a few examples.
\begin{example}[Examples of facial residual functions]\label{ex:frf}
If $\stdCone$ is a symmetric cone (i.e., a self-dual homogeneous cone, see \cite{FK94,FB08}), then given $\stdFace \face \stdCone$ and  $\zz \in \stdFace^*$, there exists a $\kappa > 0$ such that $\psi _{\stdFace,\zz}(\epsilon,t) \coloneqq \kappa \epsilon + \kappa \sqrt{\epsilon t}$ is
a {\oneFRF} for $\stdFace$ and $\zz$, see \cite[Theorem~35]{L17}.

If $\stdCone$ is a polyhedral cone, the
function $\psi _{\stdFace,\zz}(\epsilon,t) \coloneqq \kappa \epsilon$ can be taken instead, with no dependency on $t$, see \cite[Proposition~18]{L17}.
\end{example}

Moving on, we say that
a function $\tilde \psi_{\stdFace,\zz}$ is a \emph{positively rescaled shift of $\psi_{\stdFace,\zz}$} if there are positive constants $M_1,M_2,M_3$ and nonnegative constant $M_4$ such that
\begin{equation}\label{eq:pos_rescale}
\tilde \psi _{\stdFace,\zz}(\epsilon,t) = M_3\psi_{\stdFace,\zz} (M_1\epsilon,M_2t) + M_4\epsilon.
\end{equation}
This is a generalization of the notion of positive rescaling in \cite{L17}, which sets $M_4 = 0$.
We also need to compose facial residual functions in a special manner.
Let $f:\RP\times \RP \to \RP$ and $g:\RP\times \RP \to \RP$ be functions.
We define the \emph{diamond composition} $f\comp g$ to be the function satisfying
\begin{equation}\label{eq:comp}
(f\comp g)(a,b) = f(a+g(a,b),b), \qquad \forall a,b \in \RP.
\end{equation}
Note that the above composition is not associative in general. When we have functions $f_i:\RP\times \RP \to \RP$, $i = 1,\ldots,m$ with $m\ge 3$, we define $f_m\comp \cdots \comp f_1$ inductively as the function $\varphi _m$ such that
\begin{align*}
\varphi_i &\coloneqq f_i \comp \varphi_{i-1},\qquad i \in \{2,\ldots,m\}\\
\varphi_1 &\coloneqq f_1.
\end{align*}
With that, we have $
f_m\comp f_{m-1} \comp \cdots \comp f_2 \comp f_1 \coloneqq f_m\comp (f_{m-1}\comp(\cdots \comp (f_2\comp f_1)))$.

The following lemma, which holds for a general closed convex cone $\stdCone$, shows how (positively rescaled shifts of) {\oneFRF s} for the \emph{faces of $\stdCone$} can be combined via the diamond composition to derive useful bounds on the distance to faces. A version of it was proved in {\cite[Lemma~22]{L17}}, which required the cones to be pointed and made use of general (i.e., not necessarily one-step) facial residual functions with respect to $\stdCone$. This is a subtle, but very crucial difference which will allows us to relax the assumptions in  \cite{L17}. %So for each $\stdFace_i \face \stdCone$ and
%$\zz_{i} \in \stdFace_{i}^*$ appearing in the following lemma, $\psi_i$ will be a facial residual function  for $\stdFace_{i}$ and $\zz_{i}$ with respect to $\stdFace_i$.
\begin{lemma}[Diamond composing facial residual functions]\label{lem:chain}
	Suppose \eqref{eq:feas} is feasible and let
	\[
	\stdFace _{\ell}  \subsetneq \cdots \subsetneq \stdFace_1 = \stdCone
	\]
	be a chain of faces of $\stdCone$ together with $\zz_i \in \stdFace _i^*\cap\stdSpace^\perp \cap \{\abd\}^\perp$ such that
	$\stdFace_{i+1} = \stdFace _i\cap \{\zz_i\}^\perp$, for $i = 1,\ldots, \ell - 1$.
	For each $i$, let $\psi _{i}$ be a {\oneFRFs} for $\stdFace_i$ and $\zz_i$.
	Then, there is a positively rescaled shift of $\psi_i$ (still denoted as $\psi_i$ by an abuse of notation) so that
	for every $\xx \in \ambSpace$ and $\epsilon \geq 0$:
	\[
	 \quad \dist(\xx,\stdCone) \leq \epsilon, \quad \dist(\xx,\stdSpace + \abd) \leq \epsilon \quad \Rightarrow\quad
	\dist(\xx,  \stdFace _{\ell})  \leq
	\varphi (\epsilon,\norm{\xx}),
	\]
	where $\varphi = \psi _{{\ell-1}}\comp \cdots \comp \psi_{{1}}$, if $\ell \geq 2$. If $\ell = 1$, we let $\varphi$ be the function satisfying $\varphi(\epsilon, t) = \epsilon$.
\end{lemma}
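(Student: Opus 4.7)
The plan is to prove the claim by induction on $i \in \{1,\ldots,\ell\}$, with the inductive statement being that, after absorbing suitable positive rescalings into each $\psi_j$,
$$\xx \in \spanVec\stdCone,\ \dist(\xx,\stdCone) \le \epsilon,\ \dist(\xx,\stdSpace+\abd) \le \epsilon \ \Rightarrow\ \dist(\xx,\stdFace_i) \le \varphi_{i-1}(\epsilon,\norm{\xx}),$$
where $\varphi_0(\epsilon,t) \coloneqq \epsilon$ and, for $i \ge 2$, $\varphi_{i-1} \coloneqq \psi_{i-1} \comp \varphi_{i-2}$. The case $i = \ell$ is then exactly the desired conclusion with $\varphi = \varphi_{\ell-1}$. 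The base case $i=1$ is immediate since $\stdFace_1 = \stdCone$ and the hypothesis already gives $\dist(\xx,\stdCone) \le \epsilon = \varphi_0(\epsilon,\norm{\xx})$, which also handles the degenerate case $\ell = 1$.

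For the inductive step, I would pass from index $i$ to $i+1$ by invoking the facial residual function $\psi_i$ for $\stdFace_i$ and $\zz_i$, which according to Definition~\ref{def:ebtp} requires a common upper bound $\tau$ on the three quantities (a) $\dist(\xx,\stdCone)$, (b) $\inProd{\xx}{\zz_i}$, and (c) $\dist(\xx,\spanVec\stdFace_i)$. Item (a) is bounded by $\epsilon$ by hypothesis. For (c), monotonicity of the distance under inclusion of sets gives $\dist(\xx,\spanVec\stdFace_i) \le \dist(\xx,\stdFace_i) \le \varphi_{i-1}(\epsilon,\norm{\xx})$ via the inductive hypothesis. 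The essential new ingredient is (b): since $\zz_i \in \stdSpace^\perp \cap \{\abd\}^\perp$, we have $\inProd{\yy}{\zz_i} = 0$ for every $\yy \in \stdSpace + \abd$, so Cauchy--Schwarz gives
$$|\inProd{\xx}{\zz_i}| = \inf_{\yy \in \stdSpace+\abd} |\inProd{\xx-\yy}{\zz_i}| \le \norm{\zz_i}\,\dist(\xx,\stdSpace+\abd) \le \norm{\zz_i}\,\epsilon.$$

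Setting $M_i \coloneqq \max\{1,\norm{\zz_i}\}$, the value $\tau \coloneqq M_i\bigl(\epsilon + \varphi_{i-1}(\epsilon,\norm{\xx})\bigr)$ simultaneously dominates the three bounds above, so monotonicity of $\psi_i$ and the fact that $\stdFace_{i+1} = \stdFace_i \cap \{\zz_i\}^\perp$ yield
$$\dist(\xx,\stdFace_{i+1}) \le \psi_i\bigl(M_i(\epsilon+\varphi_{i-1}(\epsilon,\norm{\xx})),\,\norm{\xx}\bigr).$$
Absorbing the constant $M_i$ into $\psi_i$ via the positive rescaling \eqref{eq:pos_rescale} with $M_1 = M_i$ and $M_2=M_3=1$ reshapes the right-hand side into $(\psi_i \comp \varphi_{i-1})(\epsilon,\norm{\xx}) = \varphi_i(\epsilon,\norm{\xx})$, completing the induction. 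The only genuine obstacle is the inner-product estimate in (b), which is the one place where the orthogonality data $\zz_i \in \stdSpace^\perp \cap \{\abd\}^\perp$ is consumed; the remainder is simply orchestrating the diamond composition so that the $\norm{\zz_i}$-dependent constants can be hidden inside the renamed $\psi_i$'s.
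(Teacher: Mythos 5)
Your proof is correct and follows essentially the same strategy as the paper's: induction along the chain, using the inductive bound on $\dist(\xx,\spanVec\stdFace_i)$ and the orthogonality $\zz_i\in\stdSpace^\perp\cap\{\abd\}^\perp$ to certify the three hypotheses of Definition~\ref{def:ebtp} for $\psi_i$, then absorbing the $\norm{\zz_i}$-dependent constant into the allowed positive rescaling. The one bookkeeping wrinkle worth flagging: by seeding the recursion with $\varphi_0(\epsilon,t)=\epsilon$, your $\varphi_{\ell-1}$ equals $\psi_{\ell-1}\comp\cdots\comp\psi_1\comp\varphi_0$ rather than the stated $\psi_{\ell-1}\comp\cdots\comp\psi_1$; since $(\psi_1\comp\varphi_0)(\epsilon,t)=\psi_1(2\epsilon,t)$ is itself a positive rescaling of $\psi_1$, this folds into the rescaling freedom and matches the lemma's form, but you should say so explicitly rather than asserting that $i=\ell$ is ``exactly'' the desired conclusion.
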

\begin{proof}
For $\ell = 1$, we have $\stdFace_{\ell} = \stdCone$, so the lemma follows immediately. Now, we consider the case $\ell \geq 2$.
 First we note that $\stdSpace + \abd$ is contained in all the $\{\zz_{i}\}^\perp$ for $i = 1, \ldots, \ell-1$. Since the distance of $\xx \in \ambSpace$ to $\{\zz_{i}\}^\perp$ is given by $\frac{|\inProd{\xx}{\zz_i}|}{\norm{\zz_i}}$, we have the following chain of implications
\begin{equation}\label{eq:hyper}
\dist(\xx,\stdSpace + \abd) \leq \epsilon\quad \Rightarrow\quad \dist(\xx,\{\zz_i\}^\perp) \leq \epsilon \quad \Rightarrow\quad  \inProd{\xx}{\zz_i} \leq  \epsilon \norm{\zz_i}.
\end{equation}
Next, we proceed by induction. If $\ell = 2$, we
have that $\psi_1$ is a  {\oneFRF} for
$\stdCone$ and $\zz_1$. By
Definition~\ref{def:onefrf}, we have
\[
\yy \in \spanVec \stdCone, \quad \dist(\yy,\stdCone) \leq \epsilon, \quad \inProd{\yy}{\zz_1} \leq \epsilon \quad \Rightarrow \quad \dist(\yy,\stdFace_{2})  \leq
\psi_{1} (\epsilon, \norm{\yy}).
\]
In view of \eqref{eq:hyper} and the monotonicity of $\psi_1$, we see further that
\begin{equation}\label{eq:ind_base}
\yy \in \spanVec \stdCone, \, \dist(\yy ,\stdCone) \leq \epsilon, \, \dist(\yy ,\stdSpace + \abd) \leq \epsilon \, \Rightarrow \, \dist(\yy ,\stdFace_{2})  \leq
\psi_{1} (\epsilon(1+\norm{\zz_{1}}), \norm{\yy}).
\end{equation}
Now, suppose that $\xx \in \ambSpace$ and
$\epsilon \geq 0$
are such that $\dist(\xx ,\stdCone) \leq \epsilon$ and $\dist(\xx ,\stdSpace + \abd) \leq \epsilon$.
Let $\hat\xx := P_{\spanVec\stdCone}(\xx)$.
Since $\stdCone \subseteq \spanVec \stdCone$,
we have $\dist(\xx, \spanVec \stdCone)\leq \dist(\xx, \stdCone)$ and, in view of \eqref{proj:p2}, we have that
\begin{equation}\label{eq:ind_base2}
\begin{aligned}
\dist(\hat\xx,\stdCone) &\leq \dist(\xx,\spanVec\stdCone) + \dist(\xx,\stdCone)\le  2\epsilon,\\
\dist(\hat \xx,\stdSpace + \abd) &\leq \dist(\xx,\spanVec\stdCone) + \dist(\xx,\stdSpace + \abd)\le 2\epsilon.
\end{aligned}
\end{equation}
From \eqref{proj:p1}, \eqref{eq:ind_base} and
\eqref{eq:ind_base2} we obtain
\begin{equation}\notag
\begin{aligned}
\dist(\xx, \stdFace_2) \leq \dist(\xx, \spanVec \stdCone) + \dist(\hat \xx, \stdFace_{2}) &\leq \epsilon + \psi_{{1}}(2\epsilon(1+\norm{\zz_{1}}),\norm{\hat \xx})\\
&\leq \epsilon + \psi_{{1}}(2\epsilon(1+\norm{\zz_{1}}),\norm{ \xx}),\\
\end{aligned}
\end{equation}
where the last inequality follows from the monotonicity of $\psi_{{1}}$ and the fact
that $\norm{\hat \xx} \leq \norm{\xx}$.
This proves the lemma for chains of length $\ell = 2$ because the function mapping $(\epsilon,t)$ to $\epsilon + \psi_{{1}}(2\epsilon(1+\norm{\zz_{1}}),t)$ is a positively rescaled shift of $\psi_{{1}}$.

 Now, suppose that the lemma holds for chains of length $\hat \ell$ and consider a chain of length $\hat \ell + 1$. By the induction hypothesis, we have
\begin{equation}\label{eq:inductive}
\dist(\xx,\stdCone) \leq \epsilon, \quad \dist(\xx,\stdSpace + \abd) \leq \epsilon \quad \Rightarrow \quad \dist(\xx,\stdFace_{\hat \ell})  \leq
\varphi (\epsilon, \norm{\xx}),
\end{equation}
where $\varphi = \psi _{{\hat\ell-1}}\comp \cdots \comp \psi_1$ and the $\psi_i$ are (positively rescaled shifts of) \oneFRF s.
By the definition of $\psi _{{\hat\ell}}$ as a {\oneFRF} and using \eqref{eq:hyper}, we may positively rescale $\psi_{{\hat\ell}}$ (still denoted as $\psi_{{\hat\ell}}$ by an abuse of notation) so that for $\yy \in \spanVec \stdFace_{\hat\ell}$
and $\hat \epsilon \ge 0$, the following implication holds:
\begin{equation}\label{eq:eps_hat_ell}
\dist(\yy,\stdFace_{\hat\ell}) \leq \hat\epsilon, \quad \dist(\yy,\stdSpace + \abd) \leq \hat\epsilon \quad \Rightarrow \quad \dist(\yy,\stdFace_{\hat \ell+1})  \leq
\psi_{\hat\ell}(\hat \epsilon, \norm{\yy}).
\end{equation}
Now, suppose that $\xx \in \ambSpace$ and $\epsilon \ge 0$ satisfy $\dist(\xx,\stdCone) \leq \epsilon$ and $\dist(\xx,\stdSpace + \abd) \leq \epsilon$. Let $\hat\xx := P_{\spanVec\stdFace_{\hat \ell}}(\xx)$.
As before, since $\stdFace_{\hat\ell} \subseteq \spanVec \stdFace_{\hat\ell}$,
we have $\dist(\xx, \spanVec \stdFace_{\hat\ell})\leq \dist(\xx, \stdFace_{\hat\ell})$ and, in view of \eqref{proj:p2}, we have
\begin{equation}\label{eq:ind}
\begin{aligned}
  \dist(\hat\xx,\stdFace_{\hat\ell}) &\leq \dist(\xx,\spanVec\stdFace_{\hat\ell}) + \dist(\xx,\stdFace_{\hat\ell})\le  2\dist(\xx,\stdFace_{\hat \ell}) \le  2\dist(\xx,\stdFace_{\hat \ell}) + \epsilon,\\
  \dist(\hat \xx,\stdSpace + \abd) &\leq \dist(\xx,\spanVec\stdFace_{\hat\ell}) + \dist(\xx,\stdSpace + \abd)\le \dist(\xx,\stdFace_{\hat \ell})+\epsilon \le  2\dist(\xx,\stdFace_{\hat \ell}) + \epsilon.
\end{aligned}
\end{equation}
Let $\hat \psi_{\hat\ell}$ be such that
$\hat \psi_{\hat\ell}(s,t)\coloneqq s + \psi_{\hat\ell}(2s,t)$, so that $\hat \psi_{\hat\ell}$ is a positively rescaled shift of $\psi_{\hat\ell}$.
Then, \eqref{eq:ind}  together with \eqref{eq:eps_hat_ell} and \eqref{proj:p1} gives
\begin{align*}
\dist(\xx,\stdFace_{\hat \ell+1}) &\le \dist(\xx,\spanVec\stdFace_{\hat\ell}) + \dist(\hat\xx,\stdFace_{\hat \ell+1})\le \dist(\xx,\stdFace_{\hat\ell}) + \dist(\hat\xx,\stdFace_{\hat \ell+1})\\
 &\leq \dist(\xx,\stdFace_{\hat\ell}) + \psi_{\hat\ell}(\epsilon+2\dist(\xx,\stdFace_{\hat \ell}) , \norm{\hat\xx}) \overset{\rm (a)}\leq \dist(\xx,\stdFace_{\hat\ell}) + \psi_{\hat\ell}(2\epsilon+2\dist(\xx,\stdFace_{\hat \ell}) , \norm{\xx})\\
 & \le \hat \psi_{\hat\ell}(\epsilon+\dist(\xx,\stdFace_{\hat \ell}), \norm{\xx})\overset{\rm (b)}\leq\hat \psi_{\hat\ell}(\epsilon+\varphi(\epsilon, \norm{\xx}), \norm{\xx})=(\hat \psi_{\hat\ell} \comp \varphi)(\epsilon,\norm{\xx}),
\end{align*}
where (a) follows from the monotonicity of $\psi_{\hat\ell}$ and the fact that $\|\hat \xx\|\le \|\xx\|$, and (b) follows from \eqref{eq:inductive} and the monotonicity of $\hat\psi_{\hat\ell}$. This completes the proof.
\end{proof}

We now have all the pieces to state an error bound result for \eqref{eq:feas} that does not require any constraint qualifications.
\begin{theorem}[Error bound based on {\oneFRFs}s]\label{theo:err}
	Suppose \eqref{eq:feas} is feasible and let
	\[
	\stdFace _{\ell}  \subsetneq \cdots \subsetneq \stdFace_1 = \stdCone
	\]
	be a chain of faces of $\stdCone$  together with $\zz_i \in \stdFace _i^*\cap\stdSpace^\perp \cap \{\abd\}^\perp$ such that
	$\{\stdFace _{\ell}, \stdSpace+\abd\}$ satisfies the
	PPS condition and $\stdFace_{i+1} = \stdFace _i\cap \{\zz_i\}^\perp$ for every $i$.
	For $i = 1,\ldots, \ell - 1$, let $\psi _{i}$ be a {\oneFRFs} for $\stdFace_{i}$ and $\zz_i$.

	Then, there is a suitable positively rescaled shift of the $\psi _{i}$ (still denoted as $\psi_i$ by an abuse of notation) such that for any bounded set $B$ there is a  positive constant $\kappa_B$ (depending on $B, \stdSpace, \abd, \stdFace _{\ell}$) such that
	\[
	\xx \in B, \quad \dist(\xx,\stdCone) \leq \epsilon, \quad \dist(\xx,\stdSpace + \abd) \leq \epsilon\quad \Rightarrow \quad \dist\left(\xx, (\stdSpace + \abd) \cap \stdCone\right) \leq \kappa _B (\epsilon+\varphi(\epsilon,M)),
	\]
	where $M = \sup _{\xx \in B} \norm{\xx}$,
	$\varphi = \psi _{{\ell-1}}\comp \cdots \comp \psi_{{1}}$, if $\ell \geq 2$. If $\ell = 1$, we let $\varphi$ be the function satisfying $\varphi(\epsilon, M) = \epsilon$.
\end{theorem}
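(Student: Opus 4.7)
The plan is to assemble the theorem from two results already in place: Lemma~\ref{lem:chain}, which bounds $\dist(\xx,\stdFace_\ell)$ in terms of $\varphi(\epsilon,\norm{\xx})$, and Proposition~\ref{prop:err_cq2}, which converts a bound on $\dist(\xx,\stdFace_\ell)$ into a bound on $\dist(\xx,(\stdSpace+\abd)\cap\stdCone)$ under the PPS condition at the terminal face. Once both results are brought to bear on $\stdFace_\ell$, the conclusion follows by direct substitution, coupled with the monotonicity of $\varphi$ in its second argument to replace $\norm{\xx}$ by $M$.

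First I would check that Proposition~\ref{prop:err_cq2} is applicable with $\stdFace=\stdFace_\ell$. Hypothesis (b) is precisely the PPS assumption that is part of the hypothesis. For hypothesis (a) I would argue by induction on $i$ that $\stdFace_i\cap(\stdSpace+\abd)=\stdCone\cap(\stdSpace+\abd)$. The base case $i=1$ is trivial since $\stdFace_1=\stdCone$. For the inductive step, the hypothesis $\zz_i\in\stdSpace^\perp\cap\{\abd\}^\perp$ implies $\stdSpace+\abd\subseteq\{\zz_i\}^\perp$, so any $\xx\in\stdFace_i\cap(\stdSpace+\abd)$ already lies in $\stdFace_i\cap\{\zz_i\}^\perp=\stdFace_{i+1}$. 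Setting $i=\ell$ gives $\stdCone\cap(\stdSpace+\abd)\subseteq\stdFace_\ell$, which is (a). Proposition~\ref{prop:err_cq2} then produces $\kappa_B>0$ depending on $B,\stdSpace,\abd,\stdFace_\ell$ with
\[
\dist(\xx,\stdCone\cap(\stdSpace+\abd))\le\kappa_B\bigl(\dist(\xx,\stdFace_\ell)+\dist(\xx,\stdSpace+\abd)\bigr),\qquad \forall\,\xx\in B.
\]

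Next I would invoke Lemma~\ref{lem:chain} to obtain, after a common positive rescaling of the $\psi_i$ (absorbed silently by the same abuse of notation as in the theorem statement), the implication
\[
\xx\in\spanVec\stdCone,\ \dist(\xx,\stdCone)\le\epsilon,\ \dist(\xx,\stdSpace+\abd)\le\epsilon\ \Longrightarrow\ \dist(\xx,\stdFace_\ell)\le\varphi(\epsilon,\norm{\xx}),
\]
with $\varphi(\epsilon,\cdot)=\epsilon$ in the degenerate case $\ell=1$ (where $\stdFace_\ell=\stdCone$ and no rescaling is needed). Each $\psi_i$ is monotone nondecreasing in each argument by Definition~\ref{def:ebtp}(i), and a routine check shows the diamond composition preserves this monotonicity, so $\varphi$ is monotone nondecreasing in its second argument. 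Hence for $\xx\in B\cap\spanVec\stdCone$, $\norm{\xx}\le M$ gives $\dist(\xx,\stdFace_\ell)\le\varphi(\epsilon,M)$. Combining this with the bound from Proposition~\ref{prop:err_cq2} and the assumption $\dist(\xx,\stdSpace+\abd)\le\epsilon$ yields the desired
\[
\dist(\xx,(\stdSpace+\abd)\cap\stdCone)\le\kappa_B\bigl(\epsilon+\varphi(\epsilon,M)\bigr).
\]
The proof is essentially a clean assembly of prior results, with no real obstacle; the only point demanding care is the inductive verification that $\stdFace_\ell$ absorbs $\stdCone\cap(\stdSpace+\abd)$, which hinges on the orthogonality relations $\zz_i\in\stdSpace^\perp\cap\{\abd\}^\perp$ built into the chain.
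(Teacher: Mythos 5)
Your proof is correct and takes essentially the same approach as the paper's: invoke Lemma~\ref{lem:chain} to bound $\dist(\xx,\stdFace_\ell)$ and then apply Proposition~\ref{prop:err_cq2} at the terminal face to conclude. Your extra inductive verification that $\stdFace_\ell$ contains $\stdCone\cap(\stdSpace+\abd)$ and your check that the diamond composition preserves monotonicity in the second argument are details the paper leaves implicit, but they are part of the same argument, not a different one.
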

\begin{proof}
	The case $\ell = 1$ follows from Proposition \ref{prop:err_cq2}, by
	taking $\stdFace = \stdFace_1$.
	Now, suppose $\ell \geq 2$.
	We apply Lemma \ref{lem:chain}, which tells us that, after positively rescaling and shifting the $\psi_i$,
	we have:
	\[
	\dist(\xx,\stdCone) \leq \epsilon, \quad \dist(\xx,\stdSpace + \abd) \leq \epsilon \implies
	\dist(\xx, \stdFace _{\ell})  \leq
	\varphi(\epsilon,\norm{\xx}),
	\]
	where $\varphi = \psi _{{\ell-1}}\comp \cdots \comp \psi_{{1}} $.
	In particular, since $\norm{\xx} \leq M$ for $\xx \in B$ we have
	\begin{equation}\label{eq:aam_aux1}
	\quad \dist(\xx,\stdCone) \leq \epsilon, \quad \dist(\xx,\stdSpace + \abd) \leq \epsilon \implies
	\dist(\xx, \stdFace _{\ell})  \leq
	\varphi(\epsilon,M), \qquad \forall \xx \in B
	\end{equation}	
	By assumption, $\{\stdFace _{\ell}, \stdSpace+\abd\}$ satisfies the
	PPS condition. We invoke Proposition~\ref{prop:err_cq2} to find $ \kappa _B > 0$ such that
	\begin{equation}\label{eq:aam_aux2}
\dist(\xx, \stdCone \cap (\stdSpace + \abd)) \leq \kappa_B(\dist(\xx, \stdFace_{\ell}) + \dist(\xx, \stdSpace+\abd)), \qquad \forall \xx \in B.
	\end{equation}	
	Combining \eqref{eq:aam_aux1}, \eqref{eq:aam_aux2}, we
	conclude that if $\xx \in B$ and $\epsilon\ge 0$ satisfy $\dist(\xx,\stdCone) \leq \epsilon$ and $\dist(\xx,\stdSpace + \abd) \leq \epsilon$,
	then we have $\dist\left(\xx, (\stdSpace + \abd) \cap \stdCone\right) \leq \kappa_B(\epsilon+\varphi(\epsilon,M))$. This completes the proof.
\end{proof}
Theorem~\ref{theo:err} is an improvement over \cite[Theorem~23]{L17} because it removes the amenability assumption. Furthermore, it shows that it is enough to determine the \oneFRF s for $\stdCone$ and its faces, whereas \cite[Theorem~23]{L17}  may require all possible facial residual functions related to $\stdCone$ and its faces.
Nevertheless, Theorem~\ref{theo:err} is still an abstract error bound result; whether some concrete inequality can be written down depends on obtaining a formula for the $\varphi$ function. To do so, it would require finding expressions for the {\oneFRF}s.
In the next subsections, we will address this challenge.

\subsection{How to compute {\oneFRF}s?}\label{sec:frf_comp}
In this section, we present some general tools for computing {\oneFRF}s.
\begin{lemma}[{\oneFRFs} from error bound]\label{lem:facialresidualsbeta}
	Suppose that $\stdCone$ is a closed convex cone and let $\zz\in \stdCone^*$ be such that $\stdFace = \{\zz \}^\perp\cap \stdCone$ is a proper face of $\stdCone$.
	Let $\frakg:\RR_+\to\RR_+$ be monotone nondecreasing with $\frakg(0)=0$, and let $\kappa_{\zz,\fraks}$ be a finite monotone nondecreasing nonnegative function in $\fraks\in \RR_+$ such that
	\begin{equation}\label{assumption:q}
	\dist(\qq,\stdFace) \leq \kappa_{\zz,\norm{\qq}} \frakg(\dist(\qq,\stdCone))\ \ \mbox{whenever}\ \ \qq \in \{\zz\}^\perp.
	\end{equation}
	Define the function $\psi_{\stdCone,\zz}:\RP\times \RP\to \RP$ by
	\begin{equation*}
	\psi_{\stdCone,\zz}(s,t) := \max \left\{s,s/\|\zz\| \right\} + \kappa_{\zz,t}\frakg \left(s +\max \left\{s,s/\|\zz\| \right\} \right).
	\end{equation*}
	Then we have
	\begin{equation}\label{haha}
	\dist(\pp,\stdFace) \leq \psi_{\stdCone,\zz}(\epsilon,\norm{\pp}) \mbox{\ \ \ \  whenever\ \ \ \ $\dist(\pp,\stdCone) \leq \epsilon$\ \ and\ \ $\inProd{\pp}{\zz} \leq \epsilon$.}
	\end{equation}
	%\label{epsilonconditions}
	Moreover, $\psi_{\stdCone,\zz}$ is a {\oneFRFs} for $\stdCone$ and $\zz$.
\end{lemma}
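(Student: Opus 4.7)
The strategy is to reduce the problem to the assumed error bound \eqref{assumption:q} by projecting $\pp$ onto the hyperplane $\{\zz\}^\perp$ and carefully tracking distances. Let $\pp\in \ambSpace$ satisfy $\dist(\pp,\stdCone)\le \epsilon$ and $\inProd{\pp}{\zz}\le \epsilon$. First I would show that $|\inProd{\pp}{\zz}|\le \|\zz\|\max\{\epsilon,\epsilon/\|\zz\|\}$; the upper bound $\inProd{\pp}{\zz}\le \epsilon$ is given, while for the lower bound I would pick $\pp'\in \stdCone$ with $\|\pp-\pp'\|\le \epsilon$ and use $\zz\in\stdCone^*$ to obtain $\inProd{\pp}{\zz}\ge \inProd{\pp'}{\zz}-\|\zz\|\|\pp-\pp'\| \ge -\epsilon\|\zz\|$. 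Dividing by $\|\zz\|$ then yields $\dist(\pp,\{\zz\}^\perp)\le \max\{\epsilon,\epsilon/\|\zz\|\}$.

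Next I would let $\qq := P_{\{\zz\}^\perp}(\pp) = \pp - \tfrac{\inProd{\pp}{\zz}}{\|\zz\|^2}\zz$, so that $\|\pp-\qq\|\le \max\{\epsilon,\epsilon/\|\zz\|\}$ and, since projecting onto a subspace containing the origin is nonexpansive with respect to $\|\cdot\|$, also $\|\qq\|\le \|\pp\|$. By the triangle inequality $\dist(\qq,\stdCone)\le \dist(\pp,\stdCone)+\|\pp-\qq\|\le \epsilon+\max\{\epsilon,\epsilon/\|\zz\|\}$. Now I apply the hypothesis \eqref{assumption:q} to $\qq\in\{\zz\}^\perp$ and use monotonicity of $\frakg$ and of $\kappa_{\zz,\cdot}$ (together with $\|\qq\|\le\|\pp\|$) to get
\[
\dist(\qq,\stdFace) \le \kappa_{\zz,\|\pp\|}\,\frakg\bigl(\epsilon+\max\{\epsilon,\epsilon/\|\zz\|\}\bigr).
\]
A final triangle inequality gives $\dist(\pp,\stdFace)\le \|\pp-\qq\|+\dist(\qq,\stdFace)\le \psi_{\stdCone,\zz}(\epsilon,\|\pp\|)$, which is exactly \eqref{haha}.

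For the second assertion I would verify the two bullets of Definition~\ref{def:ebtp} applied with $\stdFace$ replaced by $\stdCone$ (so that the relevant face under consideration is $\stdCone\cap\{\zz\}^\perp=\stdFace$). Property (i) is immediate from the defining formula of $\psi_{\stdCone,\zz}$: it is nonnegative, monotone nondecreasing in each argument (using monotonicity of $\frakg$ and of $\kappa_{\zz,\cdot}$), and $\psi_{\stdCone,\zz}(0,t)=0$ because $\frakg(0)=0$. For property (ii), note that when $\stdFace$ in the definition is taken to be $\stdCone$ itself the constraint $\dist(\xx,\spanVec\stdCone)\le \epsilon$ is automatic for $\xx\in \spanVec \stdCone$, so the desired implication reduces precisely to \eqref{haha}, which has already been established.

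The only genuinely delicate point is the two-sided control of $\inProd{\pp}{\zz}$, which is what produces the expression $\max\{s,s/\|\zz\|\}$ in the definition of $\psi_{\stdCone,\zz}$; everything else is triangle-inequality bookkeeping and monotonicity of the ingredients $\frakg$ and $\kappa_{\zz,\cdot}$.
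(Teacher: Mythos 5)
Your argument is correct and follows essentially the same route as the paper's own proof: bound $\dist(\pp,\{\zz\}^\perp)$ via the two-sided control on $\inProd{\pp}{\zz}$ using $\zz\in\stdCone^*$, pass to $\qq=P_{\{\zz\}^\perp}\pp$, apply the hypothesis \eqref{assumption:q} to $\qq$ together with the monotonicity of $\frakg$ and $\kappa_{\zz,\cdot}$, and finish by the triangle inequality. The only cosmetic difference is that the paper uses $P_\stdCone(\pp)$ where you use an arbitrary $\pp'\in\stdCone$ with $\|\pp-\pp'\|\le\epsilon$; both are valid since $\stdCone$ is closed.
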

\begin{proof}
	Suppose that $\dist(\pp,\stdCone) \leq \epsilon$ and $\inProd{\pp}{\zz} \leq \epsilon$. We first claim that
	\begin{equation}\label{dpzperp}
	\dist(\pp,\{\zz \}^\perp) \leq \max \left\{\epsilon,\epsilon/\|\zz\| \right\}.
	\end{equation}
	This can be shown as follows. Since $\zz \in \stdCone^*$, we have
	$\inProd{\pp+P_{\stdCone}(\pp)-\pp}{\zz} \geq 0$ and
	\[
	\inProd{\pp}{\zz} \geq - \inProd{P_{\stdCone}(\pp)-\pp}{\zz} \geq -\epsilon \norm{\zz}.
	\]
	We conclude that $|\langle \pp,\zz\rangle| \leq \max\{\epsilon \norm{\zz},\epsilon\}$.
	This, in combination with $\dist(\pp,\{\zz \}^\perp) = |\langle \pp,\zz\rangle|/\|\zz\|$, leads to \eqref{dpzperp}.

	%	This can be shown as follows. Note that $\zz \in \stdCone^*\backslash\{{\bf 0}\}$. If $\langle \pp,\zz \rangle < 0$ and $\dist(\pp,\{\zz\}^\perp)> \epsilon$, we have
	%	\[
	%	\dist(\pp,\stdCone)\ge \dist(\pp,\{\ww:\; \langle\zz,\ww\rangle\ge 0\}) = \dist(\pp,\{\zz\}^\perp) > \epsilon,\]
	%	which is a contradiction. Hence, we must either have $\dist(\pp,\{\zz\}^\perp)\le \epsilon$ or $\langle \pp,\zz \rangle \geq 0$. In the latter case, a direct computation shows that
	%	\begin{equation*}
	%	\dist(\pp,\{\zz \}^\perp) = |\langle \pp,\zz\rangle|/\|\zz\| = \langle \pp,\zz\rangle/\|\zz\| \le \epsilon/\|\zz\|.
	%	\end{equation*}
	%	Altogether, \eqref{dpzperp} holds.
	
	Next, let $\qq:=P_{\{\zz \}^\perp}\pp$.
	Then we have that
	\begin{equation*}
	\begin{aligned}
	\dist(\pp,\stdFace) &\leq \|\pp - \qq\|+\dist(\qq,\stdFace)\overset{\rm (a)}\leq \max \left\{\epsilon,\epsilon/\|\zz\| \right\} + \dist(\qq,\stdFace)\\
	&\overset{\rm (b)}\leq \max \left\{\epsilon,\epsilon/\|\zz\| \right\} + \kappa_{\zz,\norm{\qq}} \frakg\left(\dist(\qq,\stdCone)\right)\\
	&\overset{\rm (c)}\leq \max \left\{\epsilon,\epsilon/\|\zz\| \right\} + \kappa_{\zz,\norm{\pp}} \frakg\left(\dist(\qq,\stdCone)\right)\\
	&\overset{\rm (d)}\leq \max \left\{\epsilon,\epsilon/\|\zz\| \right\} + \kappa_{\zz,\norm{\pp}} \frakg\left(\epsilon +\max \left\{\epsilon,\epsilon/\|\zz\| \right\} \right),
	\end{aligned}
	\end{equation*}
	where (a) follows from \eqref{dpzperp}, (b) is a consequence of \eqref{assumption:q}, (c) holds because $\|\qq\| = \|P_{\{\zz \}^\perp}\pp\| \leq \|\pp\|$ so that $\kappa_{\zz,\|\qq\|}\leq \kappa_{\zz,\|\pp\|}$, and (d) holds because $\frakg$ is monotone nondecreasing and
	$$
	\dist(\qq,\stdCone) \leq \dist(\pp,\stdCone) + \|\qq - \pp\| \leq \epsilon + \max \left\{\epsilon,\epsilon/\|\zz\| \right\};
	$$
	here, the second inequality follows from \eqref{dpzperp} and the assumption that $\dist(\pp,\stdCone)\le \epsilon$. This proves \eqref{haha}. Finally, notice that $\psi_{\stdCone,\zz}$ is nonnegative, monotone nondecreasing in each argument, and that $\psi_{\stdCone,\zz}(0,t)=0$ for every $t \in \RR_+$. Hence, $\psi_{\stdCone,\zz}$ is a {\oneFRF} for $\stdCone$ and $\zz$.
\end{proof}

\begin{figure}
	\begin{center}
		\begin{tikzpicture}[scale=6]
		\draw[->,gray] (0,0) -- (1.1,0) node[right] {$x$-axis};
		\draw[->,gray] (0,0) -- (0,1.0) node[above] {$y$-axis};	
		
		%principal branch left
		%\draw[scale=1,domain=-1:-0.1,smooth,variable=\t,red] plot ({\t*exp(-\t)},{exp(-\t)});
		%principal branch right
		\draw[scale=1,domain=0:1,smooth,variable=\t,red] plot ({\t*exp(-\t)},{exp(-\t)});
		
		%secondary branch
		\draw[scale=1,domain=0.001:1,smooth,variable=\u,red] plot ({exp(-1/\u)/\u},{exp(-1/\u)});
		
		\node[right,red] at (0.08,0.94) {$K_{\exp} \cap \{(x,y,z)\;|\; z =1\}$};
		
		%\{ zz \}^\perp first solid
		\draw[scale=1,domain=0.1613:0.35,smooth,variable=\u,black] plot ({\u},{1.213*\u+-0.1956});
		
		%\{ zz \}^\perp first dotted
		\draw[scale=1,domain=0.35:.45,smooth,variable=\u,black,dotted] plot ({\u},{1.213*\u+-0.1956});
		
		%\{ zz \}^\perp second solid
		\draw[scale=1,domain=0.45:.6,smooth,variable=\u,black] plot ({\u},{1.213*\u+-0.1956});
		
		%\{ zz \}^\perp second dotted
		\draw[scale=1,domain=0.6:.8,smooth,variable=\u,black,dotted] plot ({\u},{1.213*\u+-0.1956});
		
		%\{ zz \}^\perp final solid
		\draw[scale=1,domain=0.8:1,smooth,variable=\u,black] plot ({\u},{1.213*\u+-0.1956});
		
		%\Fhat
		\draw[scale=1,domain=0.0:0.5,smooth,variable=\u,purple,dashed] plot ({\u},{(1.61/2.94)*\u+0});
		
		%draw u
		\shade[shading=ball, ball color=purple] (.294,.161) circle (0.015) node  [above left,purple] {$\uu = P_{\Fhat}\ww\;$};

		%draw v
		\shade[shading=ball, ball color=black] (0.294,0.6239) circle (0.015) node  [left,black] {$\vv=P_{\stdCone}\qq\;$};
		
		%draw q
		\shade[shading=ball, ball color=black] (0.9699,0.9809) circle (0.015) node [right,black] {$\;\;\qq \in \{\zz\}^\perp\cap B(\eta)\backslash \stdFace$};
		
%		%draw p
%		\shade[shading=ball, ball color=green!40!gray] (0.9699+.1,0.8985) circle (0.015) node [right,green!40!gray] {$\pp$};
		
		%draw w
		\shade[shading=ball, ball color=black] (0.52137,0.43676) circle (0.015) node  [below right,black] {$\ww=P_{\{\mathbf{z} \}^\perp}\vv$};
		
%		%p--q
%		\draw[->,thick,green!40!gray] (1.03990,0.92322) -- (0.9999,0.95618);
		
		%q--v
		\draw[->,thick,blue] (0.915828,0.952340) -- (0.348072,0.652460);
		
		%v--w
		\draw[->,thick,blue] (0.339474,0.586472) -- (0.475896,0.474188);
		
		%w--u
		\draw[<-,thick,blue] (0.3271055 +0.025,0.2023640-0.025) -- (0.4872645+0.025,0.395390 -0.025);

		\end{tikzpicture}
	\end{center}
	\caption{Theorem~\ref{thm:1dfacesmain} shows that we may replace the problem of showing that $\frakg(\|\qq-P_{\stdCone}\qq\|)/\|\qq-P_{\stdFace}\qq \|$ is uniformly bounded away from zero for all $\qq \in \{\zz \}^\perp \cap B(\eta)\setminus \stdFace$, with the equivalent problem of showing that $\frakg(\|\vv-P_{\{\zz \}^\perp}\vv \|)/\|P_{\{\zz \}^\perp}\vv - P_{\stdFace}\circ P_{\{\zz \}^\perp}\vv\|$ is uniformly bounded away from zero for all $\vv \in \bd \stdCone \cap B(\eta)\setminus \stdFace$ with $P_{\{\zz \}^\perp}\vv \neq P_{\stdFace}\circ P_{\{\zz \}^\perp}\vv$. This second problem can sometimes be easier to deal with, because it obviates the need to project onto $\stdCone$ and projects onto the nontrivial exposed face $\stdFace$ instead. For describing a possibly higher dimensional problem in 2D, we represent $\{\zz\}^\perp$ with a line, $\mathcal{K}$ with a 2D slice, and $\Fhat$ with a dot; of course, this is an oversimplification, since $\qq,\ww,\uu$ are \textit{not} generically colinear, nor would any of the points necessarily lie in the same horizontal slice. The scenario shown is meant to suggest intuition, but it is not a plausible configuration of points.}\label{fig:uvw}	
	%The construction of $\uu,\vv,\ww$ for one choice of $\qq$. For describing a possibly higher dimensional problem in 2D, we represent $\{\zz\}^\perp$ with a line, $\mathcal{K}$ with a horizontal slice, and $\Fhat$ with a dot; of course, this is an oversimplification, since $\qq,\ww,\uu$ are \textit{not} generically colinear, nor would any of the points necessarily lie in the same horizontal slice. The exact scenario shown is meant to suggest intuition, but it is not a plausible configuration of points.
\end{figure}

In view of Lemma~\ref{lem:facialresidualsbeta}, one may construct {\oneFRF}s after establishing the error bound \eqref{assumption:q}. In the next theorem, we present a characterization for the existence of such an error bound. Our result is based on the quantity \eqref{gammabetaeta} defined below being {\em nonzero}. Note that this quantity {\em does not} explicitly involve projections onto $\stdCone$; this enables us to work with the exponential cone later, whose projections do not seem to have simple expressions. Figure~\ref{fig:uvw} provides a geometric interpretation of \eqref{gammabetaeta}.

\begin{theorem}[Characterization of the existence of error bounds]\label{thm:1dfacesmain}
	Suppose that $\stdCone$ is a closed convex cone and let $\zz\in \stdCone^*$ be such that $\stdFace = \{\zz \}^\perp \cap \stdCone$ is a nontrivial exposed face of $\stdCone$.
	Let $\eta \ge 0$, $\alpha \in (0,1]$ and let $\frakg:\RR_+\to \RR_+$ be monotone nondecreasing with $\frakg(0) = 0$ and $\frakg \geq |\cdot|^\alpha$. Define
	\begin{equation}\label{gammabetaeta}
	\gamma_{\zz,\eta} := \inf_{\vv} \left\{\frac{\frakg(\|\ww-\vv\|)}{\|\ww-\uu\|}\;\bigg|\; \vv\in \bd \stdCone\cap B(\eta)\backslash\stdFace,\ \ww = P_{\{\zz \}^\perp}\vv,\  \uu = P_{\stdFace}\ww,\ \ww\neq \uu\right\}. % CANNOT remove w\neq u!
	\end{equation}
	Then the following statements hold.
	\begin{enumerate}[{\rm (i)}]
		\item\label{thm:1dfacesmaini} If $\gamma_{\zz,\eta} \in (0,\infty]$, then it holds that
		\begin{equation}\label{haha2}
		\dist(\qq,\stdFace) \leq \kappa_{\zz,\eta} \frakg(\dist(\qq,\stdCone))\ \ \mbox{whenever\ $\qq \in \{\zz \}^\perp \cap B(\eta)$},
		\end{equation}
		where $\kappa_{\zz,\eta} := \max \left \{2\eta^{1-\alpha}, 2\gamma_{\zz,\eta}^{-1}   \right \} < \infty$.
		\item\label{thm:1dfacesmainii} If there exists $\kappa_{_B} \in (0,\infty)$ so that
		\begin{equation}\label{hahaww2}
		\dist(\qq,\stdFace) \leq \kappa_{_B} \frakg(\dist(\qq,\stdCone))\ \ \mbox{whenever\ $\qq \in \{\zz \}^\perp \cap B(\eta)$},
		\end{equation}
		then $\gamma_{\zz,\eta} \in (0,\infty]$.
	\end{enumerate}
	
\end{theorem}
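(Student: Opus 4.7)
For part \eqref{thm:1dfacesmaini}, I would fix $\qq \in \{\zz\}^\perp \cap B(\eta)$. The case $\qq \in \stdCone$ forces $\qq \in \{\zz\}^\perp \cap \stdCone = \stdFace$, so the bound is trivial; assume $\qq \notin \stdCone$ and set $\vv := P_{\stdCone}\qq \in \bd\stdCone$, $\ww := P_{\{\zz\}^\perp}\vv$, $\uu := P_{\stdFace}\ww$. The first key step is to verify that $\vv \in B(\eta)$: since $\mathbf{0} \in \stdCone$, the variational inequality characterizing $\vv$ gives $\inProd{\qq-\vv}{\mathbf{0}-\vv} \le 0$, so $\|\vv\|^2 + \|\qq-\vv\|^2 \le \|\qq\|^2 \le \eta^2$. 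The second key step exploits that $\qq, \ww \in \{\zz\}^\perp$ while $\vv - \ww \in \lspan\{\zz\}$: by Pythagoras, $\|\qq-\vv\|^2 = \|\qq-\ww\|^2 + \|\ww-\vv\|^2$, so both $\|\qq-\ww\|$ and $\|\ww-\vv\|$ are at most $\dist(\qq,\stdCone)$.

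Next I would split into two cases. If $\ww = \uu$, then the triangle inequality together with $\dist(\qq,\stdCone) \le \|\qq\| \le \eta$ and $\frakg \ge |\cdot|^\alpha$ yield
\[
\dist(\qq,\stdFace) \le \|\qq-\uu\| = \|\qq-\ww\| \le \dist(\qq,\stdCone) \le \eta^{1-\alpha}\frakg(\dist(\qq,\stdCone)).
\]
If $\ww \neq \uu$, one observes that necessarily $\vv \notin \stdFace$ (otherwise $\vv = \ww = \uu$), so $\vv$ is admissible in the infimum defining $\gamma_{\zz,\eta}$. Hence $\|\ww-\uu\| \le \gamma_{\zz,\eta}^{-1}\frakg(\|\ww-\vv\|) \le \gamma_{\zz,\eta}^{-1}\frakg(\dist(\qq,\stdCone))$ by monotonicity of $\frakg$. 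Combining with $\dist(\qq,\stdFace) \le \|\qq-\ww\| + \|\ww-\uu\|$, the estimate follows with constant $\eta^{1-\alpha} + \gamma_{\zz,\eta}^{-1} \le \kappa_{\zz,\eta}$, the factor $2$ in $\kappa_{\zz,\eta}$ absorbing this sum of two summands.

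For part \eqref{thm:1dfacesmainii}, if the admissible set in \eqref{gammabetaeta} is empty, then $\gamma_{\zz,\eta} = +\infty > 0$ by convention and there is nothing to prove; otherwise I would fix any admissible $\vv$ and test the hypothesized bound \eqref{hahaww2} at the choice $\qq := \ww$. Nonexpansiveness of the linear projection $P_{\{\zz\}^\perp}$ (and $\mathbf{0} \in \{\zz\}^\perp$) gives $\|\ww\| \le \|\vv\| \le \eta$, so indeed $\qq \in \{\zz\}^\perp \cap B(\eta)$; since $\vv \in \stdCone$ one has $\dist(\ww,\stdCone) \le \|\ww-\vv\|$, while $\dist(\ww,\stdFace) = \|\ww-\uu\|$ by the definition of $\uu$. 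Monotonicity of $\frakg$ and \eqref{hahaww2} then yield $\|\ww-\uu\| \le \kappa_{_B}\frakg(\|\ww-\vv\|)$, so $\frakg(\|\ww-\vv\|)/\|\ww-\uu\| \ge 1/\kappa_{_B}$, and taking the infimum over all admissible $\vv$ gives $\gamma_{\zz,\eta} \ge 1/\kappa_{_B} > 0$. I expect the main obstacle to lie in part (i), specifically the two geometric observations that (a) $P_{\stdCone}\qq$ stays inside the \emph{same} ball $B(\eta)$ so that $\gamma_{\zz,\eta}$ can be invoked, and (b) the Pythagorean decomposition forced by $\qq \in \{\zz\}^\perp$ controls $\|\qq-\ww\|$ \emph{exactly} by $\dist(\qq,\stdCone)$ with no extra factor — together these are what permit $\kappa_{\zz,\eta}$ to be expressed purely in terms of $\eta$ and $\gamma_{\zz,\eta}$.
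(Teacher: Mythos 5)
Your proof is correct and follows the same core construction as the paper's: project $\qq$ successively to get $\vv = P_{\stdCone}\qq$, $\ww = P_{\{\zz\}^\perp}\vv$, $\uu = P_{\stdFace}\ww$, invoke the Pythagorean identity $\|\qq-\vv\|^2 = \|\qq-\ww\|^2 + \|\ww-\vv\|^2$, and then control $\|\ww-\uu\|$ via $\gamma_{\zz,\eta}$. Part (ii) is identical to the paper's.

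The one place you diverge is the case split in part (i), and your version is in fact a modest simplification. The paper first disposes of $\vv \in \stdFace$ separately, then (for $\vv \notin \stdFace$) splits on whether $\dist(\qq,\stdFace) \le 2\dist(\ww,\stdFace)$, which produces a max-type bound: in one case $\dist(\qq,\stdFace)\le 2\|\ww-\uu\|$, in the other $\dist(\qq,\stdFace)\le 2\|\qq-\ww\|$, with the factor $2$ in $\kappa_{\zz,\eta}$ coming from the threshold in the dichotomy. You instead split directly on $\ww = \uu$ versus $\ww \ne \uu$ — noting $\ww \ne \uu$ forces $\vv$ to be admissible in \eqref{gammabetaeta} — bound the sum $\|\qq-\ww\|+\|\ww-\uu\|$ term by term, and then let $a+b\le 2\max\{a,b\}$ absorb the sum into $\kappa_{\zz,\eta}$. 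Same constant, slightly more transparent reasoning, no threshold to motivate. Your explicit verification that $\vv \in B(\eta)$ via the variational inequality also fills in a step the paper leaves implicit. This is a legitimate and clean alternative organization of the same argument.
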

\begin{proof}
	We first consider item (i).
	If $\eta = 0$ or $\qq \in \stdFace$, the result is vacuously true, so let $\eta > 0$ and $\qq \in \{\zz \}^\perp \cap B(\eta) \backslash \stdFace$. Then $\qq\notin \stdCone$ because $\stdFace = \{\zz\}^\perp\cap \stdCone$.
	Define
	\[
	\vv=P_{\stdCone}\qq,\quad \ww = P_{\{\zz \}^\perp}\vv,\quad \text{and}\quad \uu = P_{\stdFace}\ww.
	\]
	Then $\vv\in \bd \stdCone\cap B(\eta)$ because $\qq\notin \stdCone$ and $\|\qq\|\le \eta$. If $\vv\in \stdFace$, then we have $\dist(\qq,\stdFace) = \dist(\qq,\stdCone)$ and hence
	\[
	\dist(\qq,\stdFace) = \dist(\qq,\stdCone)^{1-\alpha}\cdot \dist(\qq,\stdCone)^\alpha\le \eta^{1-\alpha}\dist(\qq,\stdCone)^\alpha\le \kappa_{\zz,\eta} \frakg(\dist(\qq,\stdCone)),
	\]
	where the first inequality holds because $\norm{\qq}\le \eta$, and the last inequality follows from the definitions of $\frakg$ and $\kappa_{\zz,\eta}$.
	Thus, from now on, we assume that $\vv\in \bd \stdCone\cap B(\eta)\backslash\stdFace$.
	
	Next, since $\ww=P_{\{\zz\}^\perp}\vv$, it holds that $\vv-\ww\in \{\zz\}^{\perp\perp}$ and hence
	$\|\qq-\vv\|^2 = \|\qq-\ww\|^2+\|\ww-\vv\|^2$. In particular, we have
	\begin{equation}\label{cinequality_infty}
	\dist(\qq,\stdCone) = \|\qq-\vv\| \geq \max\{\|\vv-\ww\|,\|\qq-\ww\|\},
	\end{equation}
	where the equality follows from the definition of $\vv$.
	Now, to establish \eqref{haha2}, we consider two cases.
	\begin{enumerate}[(I)]
		\item\label{thMbetafacescase1_infty} $\dist(\qq,\stdFace) \leq 2\dist(\ww,\stdFace)$;
		\item\label{thMbetafacescase2_infty} $\dist(\qq,\stdFace) > 2\dist(\ww,\stdFace)$.
	\end{enumerate}
	\ref{thMbetafacescase1_infty}: In this case, we have from $\uu = P_{\stdFace}\ww$ and $\qq\notin \stdFace$ that
	\begin{equation}\label{haha3}
	2\|\ww - \uu\|= 2\dist(\ww,\stdFace) \ge \dist(\qq,\stdFace) > 0,
	\end{equation}
	where the first inequality follows from the assumption in this case \ref{thMbetafacescase1_infty}.
	Hence,
	\begin{equation}\label{biginequalitybetageneral_infty}
	\frac{1}{\kappa_{\zz,\eta}} \stackrel{\rm (a)}{\le} \frac{1}{2}\gamma_{\zz,\eta} \stackrel{\rm (b)}{\leq} \frac{\frakg(\|\ww-\vv\|)}{2\|\ww-\uu\|} \stackrel{\rm (c)}{\leq} \frac{\frakg(\dist(\qq,\stdCone))}{2\|\ww-\uu\|}\stackrel{\rm (d)}{\leq} \frac{\frakg(\dist(\qq,\stdCone))}{\dist(\qq,\stdFace)},
	\end{equation}
	where (a) is true by the definition of $\kappa_{\zz,\eta}$, (b) uses the condition that $\vv\in \bd \stdCone\cap B(\eta)\backslash\stdFace$, \eqref{haha3} and the definition of $\gamma_{\zz,\eta}$, (c) is true by \eqref{cinequality_infty} and the monotonicity of $\frakg$, and (d) follows from \eqref{haha3}. This concludes case \ref{thMbetafacescase1_infty}.\footnote{In particular, in view of \eqref{biginequalitybetageneral_infty}, we see that this case only happens when $\gamma_{\zz,\eta} < \infty$.}
	
	\noindent\ref{thMbetafacescase2_infty}: Using the triangle inequality, we have
	\begin{equation*}
	2\dist(\qq,\stdFace) \leq 2\|\qq-\ww\|+2\dist(\ww,\stdFace)< 2\|\qq-\ww\|+\dist(\qq,\stdFace),
	\end{equation*}
	where the strict inequality follows from the condition for this case \ref{thMbetafacescase2_infty}. Consequently, we have $\dist(\qq,\stdFace) \leq 2\|\qq-\ww\|$. Combining this with \eqref{cinequality_infty}, we deduce further that
	\[
	\begin{aligned}
	\dist(\qq,\stdFace) &\le 2\|\qq-\ww\|\le 2 \max\{\|\vv-\ww\|,\|\qq-\ww\|\} \le 2\dist(\qq,\stdCone) = 2\dist(\qq,\stdCone)^{1-\alpha}\cdot \dist(\qq,\stdCone)^\alpha\\
	& \le 2\eta^{1-\alpha}\dist(\qq,\stdCone)^\alpha\le \kappa_{\zz,\eta} \frakg(\dist(\qq,\stdCone)),
	\end{aligned}
	\]
	where the fourth inequality holds because $\norm{\qq}\le \eta$, and the last inequality follows from the definitions of $\frakg$ and $\kappa_{\zz,\eta}$. This proves item (i).
	
	We next consider item (ii). Again, the result is vacuously true if $\eta = 0$, so let $\eta > 0$. Let $\vv\in \bd \stdCone\cap B(\eta)\backslash\stdFace$, $\ww = P_{\{\zz \}^\perp}\vv$ and $\uu = P_{\stdFace}\ww$ with $\ww\neq \uu$. Then $\ww\in B(\eta)$, and we have in view of \eqref{hahaww2} that
	\[
	\|\ww-\uu\| \overset{\rm (a)}= \dist(\ww,\stdFace) \overset{\rm (b)}\le \kappa_{_B}\frakg(\dist(\ww,\stdCone)) \overset{\rm (c)}\le \kappa_{_B}\frakg(\|\ww - \vv\|),
	\]
	where (a) holds because $\uu = P_{\stdFace}\ww$, (b) holds because of \eqref{hahaww2}, $\ww\in \{\zz\}^\perp$ and $\|\ww\|\le \eta$, and (c) is true because $\frakg$ is monotone nondecreasing and $\vv\in \stdCone$. Thus, we have $\gamma_{\zz,\eta} \ge 1/\kappa_{_B} > 0$. This completes the proof.
\end{proof}
\begin{remark}[About $\kappa_{\zz,\eta}$ and $\gamma_{\zz,\eta}^{-1}$]\label{rem:kappa}
As $\eta$ increases, the infimum in \eqref{gammabetaeta} is taken over a larger region, so 	$\gamma_{\zz,\eta}$ does not increase.
Accordingly, $\gamma_{\zz,\eta}^{-1}$ does not decrease when $\eta$ increases. Therefore, the $\kappa_{\zz,\eta}$ and $\gamma_{\zz,\eta}^{-1}$ considered in Theorem~\ref{thm:1dfacesmain} are monotone nondecreasing as functions of $\eta$ when $\zz$ is fixed. We are also using the convention that $1/\infty = 0$ so that $\kappa_{\zz,\eta} = 2\eta^{1-\alpha}$ when $\gamma_{\zz,\eta} = \infty$.
\end{remark}

Thus, to establish an error bound as in \eqref{haha2}, it suffices to show that $\gamma_{\zz,\eta} \in (0,\infty]$ for the choice of $\frakg$ and $\eta\ge 0$. Clearly, $\gamma_{\zz,0} = \infty$. The next lemma allows us to check whether $\gamma_{\zz,\eta} \in (0,\infty]$ for an $\eta > 0$ by considering {\em convergent} sequences.
\begin{lemma}\label{lem:infratio}
	Suppose that $\stdCone$ is a closed convex cone and let $\zz\in \stdCone^*$ be such that $\stdFace = \{\zz \}^\perp \cap \stdCone$ is a nontrivial exposed face of $\stdCone$.
	Let $\eta > 0$, $\alpha \in (0,1]$ and let $\frakg:\RR_+\to \RR_+$ be monotone nondecreasing with $\frakg(0) = 0$ and $\frakg \geq |\cdot|^\alpha$. Let $\gamma_{\zz,\eta}$ be defined as in \eqref{gammabetaeta}. If $\gamma_{\zz,\eta} = 0$, then there exist
	$\bv \in \stdFace$ and a sequence $\{\vv^k\}\subset \bd \stdCone\cap B(\eta) \backslash \stdFace$ such that
	\begin{subequations}\label{infratiohk}
	\begin{align}
	\underset{k \rightarrow \infty}{\lim}\vv^k = \underset{k \rightarrow \infty}{\lim}\ww^k &= \bv \label{infratiohka} \\
	{\rm and}\ \ \ \lim_{k\rightarrow\infty} \frac{\frakg(\|\ww^k - \vv^k\|)}{\|\ww^k- \uu^k\|} &= 0, \label{infratiohkb}
	\end{align}
	\end{subequations}
	where $\ww^k = P_{\{\zz\}^\perp}\vv^k$, $\uu^k = P_{\stdFace}\ww^k$ and $\ww^k\neq \uu^k$.
\end{lemma}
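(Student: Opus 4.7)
The plan is to unfold the definition of the infimum $\gamma_{\zz,\eta} = 0$ and then use compactness, closedness, and the growth condition $\frakg \ge |\cdot|^\alpha$ to upgrade a minimizing sequence into one satisfying all the desired convergence properties.

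By definition of infimum, if $\gamma_{\zz,\eta} = 0$ there is a sequence $\{\vv^k\} \subseteq \partial \stdCone \cap B(\eta) \setminus \stdFace$ together with the associated $\ww^k = P_{\{\zz\}^\perp}\vv^k$ and $\uu^k = P_{\stdFace}\ww^k$ (with $\ww^k \neq \uu^k$) such that $\frakg(\|\ww^k-\vv^k\|)/\|\ww^k-\uu^k\| \to 0$. This already delivers \eqref{infratiohkb}; the work is to produce \eqref{infratiohka} (after passing to a subsequence).

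The key quantitative observation is that $\stdFace$ is a cone containing the origin, hence $\|\ww^k - \uu^k\| = \dist(\ww^k,\stdFace) \le \|\ww^k\|$. Since projection onto the subspace $\{\zz\}^\perp$ is nonexpansive and $\vv^k \in B(\eta)$, we have $\|\ww^k\|\le \eta$, and therefore $\|\ww^k-\uu^k\|\le \eta$. Multiplying \eqref{infratiohkb} by this bounded denominator yields $\frakg(\|\ww^k-\vv^k\|) \to 0$. Invoking $\frakg \ge |\cdot|^\alpha$ with $\alpha \in (0,1]$ then forces $\|\ww^k-\vv^k\| \to 0$.

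Now compactness of $B(\eta)$ lets us pass to a subsequence (not relabeled) along which $\vv^k \to \bv$ for some $\bv \in \ambSpace$. Since $\stdCone$ is closed and $\vv^k \in \partial \stdCone \subseteq \stdCone$, we get $\bv \in \stdCone$. From $\|\ww^k-\vv^k\|\to 0$ we conclude $\ww^k \to \bv$ as well, which gives $\bv \in \{\zz\}^\perp$ since each $\ww^k$ lies in this closed hyperplane. Combining, $\bv \in \stdCone \cap \{\zz\}^\perp = \stdFace$, which establishes \eqref{infratiohka} and completes the proof.

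The only mild subtlety, and the point worth pausing on, is the bound $\|\ww^k-\uu^k\| \le \eta$: it is what converts the vanishing of the ratio in \eqref{infratiohkb} into the vanishing of its numerator, and hence into the clustering of $\vv^k$ and $\ww^k$ around a common limit in $\stdFace$. Without an a priori bound on the denominator (supplied here by $0 \in \stdFace$ together with $\vv^k \in B(\eta)$) the argument would not go through; everything else is a routine application of nonexpansiveness of projections, closedness of $\stdCone$ and of $\{\zz\}^\perp$, and compactness of $B(\eta)$.
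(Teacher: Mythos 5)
Your proof is correct and follows essentially the same approach as the paper's: extract a sequence achieving the infimum, bound $\|\ww^k - \uu^k\|$ using $0\in\stdFace$ and nonexpansiveness of projections, deduce $\|\ww^k-\vv^k\|\to 0$ via $\frakg \ge |\cdot|^\alpha$, and then use compactness of $B(\eta)$ together with closedness of $\stdCone$ and $\{\zz\}^\perp$ to place the common limit in $\stdFace$. The only cosmetic difference is ordering: the paper passes to a convergent subsequence of $\{\vv^k\}$ before establishing $\|\ww^k-\vv^k\|\to 0$, whereas you do it afterward.
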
	
\begin{proof}
	Suppose that $\gamma_{\zz,\eta} = 0$. Then, by the definition of infimum, there exists a sequence $\{\vv^k\}\subset\bd \stdCone\cap B(\eta) \backslash \stdFace$ such that
	\begin{equation}\label{infratio1}
	\underset{k \rightarrow \infty}{\lim} \frac{\frakg(\|\ww^k - \vv^k\|)}{\|\ww^k - \uu^k\|} = 0,
	\end{equation}
	where $\ww^k = P_{\{\zz\}^\perp}\vv^k$, $\uu^k = P_{\stdFace}\ww^k$ and $\ww^k\neq \uu^k$.
	Since $\{\vv^k\}\subset B(\eta)$, by passing to a convergent subsequence if necessary, we may assume without loss of generality that
	\begin{equation}\label{infratio4}
	\underset{k \rightarrow \infty}{\lim}\vv^k = \bv
	\end{equation}
	for some $\bv\in \stdCone\cap B(\eta)$. In addition, since
	$0\in \stdFace\subseteq \{\zz\}^\perp$, and projections onto closed convex sets are nonexpansive, we see that $\{\ww^k\}\subset B(\eta)$ and $\{\uu^k\}\subset B(\eta)$,
	and hence the sequence $\{\|\ww^k-\uu^k\|\}$ is bounded. Then we can conclude from \eqref{infratio1} and the assumption $\frakg \geq |\cdot|^\alpha$ that
	\begin{equation}\label{infratio3}
	\underset{k \rightarrow \infty}{\lim}\|\ww^k-\vv^k\| =0.
	\end{equation}
	Now \eqref{infratio3}, \eqref{infratio4}, and the triangle inequality give $\ww^k\to \bv$. Since $\{\ww^k\}\subset \{\zz\}^\perp$, it then follows that $\bv \in \{\zz \}^\perp$. Thus, $\bv\in \{\zz \}^\perp \cap \stdCone=\stdFace$.
	This completes the proof.
\end{proof}

Let $\stdCone$ be a closed convex cone.
Lemma~\ref{lem:facialresidualsbeta}, Theorem~\ref{thm:1dfacesmain} and Lemma~\ref{lem:infratio} are tools to obtain {\oneFRF}s for $\stdCone$. These are exactly the kind of facial residual functions needed in the abstract error bound result, Theorem~\ref{theo:err}. We conclude this subsection with a result that connects the {\oneFRF}s of a product cone and those of its constituent cones, which is useful for deriving error bounds for product cones.

\begin{proposition}[{\oneFRFs} for products]\label{prop:frf_prod}
Let $\stdCone^i \subseteq \ambSpace^i$ be closed convex cones for every $i \in \{1,\ldots,m\}$ and let $\stdCone = \stdCone^1 \times \cdots \times \stdCone^m$.
Let
$\stdFace \face \stdCone$, $\zz \in \stdFace^*$ and suppose that
$\stdFace = \stdFace^1\times \cdots \times \stdFace^m$ with $\stdFace^i \face \stdCone^i$ for every $i \in \{1,\ldots,m\}$.
Write $\zz = (\zz_1,\ldots,\zz_m)$ with $\zz_i \in (\stdFace^i)^*$.

For every $i$, let $\psi_{\stdFace^i,\zz_i}$ be a {\oneFRFs} for $\stdFace^i$ and $\zz_i$.
Then, there exists a $\kappa > 0$ such that  the function $\psi _{\stdFace,\zz}$ satisfying
\[
\psi _{\stdFace,\zz}(\epsilon,t) = \sum _{i=1}^m \psi_{\stdFace^i,\zz_i}(\kappa\epsilon,t)
\]
is a {\oneFRFs} for $\stdFace$ and $\zz$.

%In addition, if all the $\stdFace^i$ are amenable faces (i.e., $1$-amenable), then $\sigma(\cdot)$ can be taken to be a constant function.

\end{proposition}
\begin{proof}
Suppose that $\xx \in \spanVec \stdFace$ and $\epsilon\ge 0$ satisfy the inequalities
\begin{equation*}
\dist(\xx,\stdFace) \leq \epsilon, \quad \inProd{\xx}{\zz} \leq \epsilon.
\end{equation*}
We note that
\[
\stdFace\cap \{\zz\}^{\perp} = (\stdFace^{1} \cap\{\zz_1\}^\perp) \times \cdots \times (\stdFace^{m} \cap\{\zz_m\}^\perp),
\]
and that for every $i \in \{1,\ldots,m\}$,
\begin{align}\label{eq:f1}
\dist(\xx_i,\stdFace^i)\le \dist(\xx,\stdFace)\le \epsilon.
\end{align}
Since $\zz_i \in (\stdFace^i)^*$, we have from \eqref{eq:f1} that
\begin{equation}\label{eq:p_f1}
0 \leq \inProd{\zz_i}{P_{\stdFace^{i}}({\xx}_i)} = \inProd{\zz_i}{P_{\stdFace^{i}}({\xx}_i) - {\xx}_i + {\xx}_i } \leq \epsilon\norm{\zz_i} + \inProd{\zz_i}{{\xx}_i}.
\end{equation}
Using \eqref{eq:p_f1} for all $i$ and
recalling that $\inProd{\zz}{\xx}\leq \epsilon$, we have
\begin{align}
\inProd{(\zz_1,\ldots,\zz_m)}{(P_{\stdFace^{1}}({\xx}_1),\ldots,P_{\stdFace^{m}}({\xx}_m)) } \leq  \sum _{i=1}^m\left[\epsilon\norm{\zz_i} + \inProd{\zz_i}{\xx_i}\right] \leq \hat\kappa \epsilon, \label{eq:p_f}
\end{align}
where $\hat\kappa = 1+\sum_{i=1}^m\norm{\zz_i}$.
Since $\inProd{\zz_i}{P_{\stdFace^{i}}(\xx_i)} \geq 0$ for $i \in \{1,\ldots,m\}$, from \eqref{eq:p_f} we obtain
\begin{equation}\label{eq:z_pf}
\inProd{\zz_i}{P_{\stdFace^{i}}(\xx_i)} \leq \hat\kappa \epsilon, \qquad i \in \{1,\ldots,m\}.
\end{equation}
This implies that for $i \in\{1,\ldots,m\}$ we have
\begin{equation}\label{eq:revisef1}
\inProd{\zz_i}{\xx_i} = \inProd{\zz_i}{\xx_i -P_{\stdFace^{i}}(\xx_i) + P_{\stdFace^{i}}(\xx_i) } \leq \epsilon\norm{\zz} + \hat\kappa \epsilon,
\end{equation}
where the inequality follows from \eqref{eq:f1} and \eqref{eq:z_pf}.
Now, recapitulating, the facial
residual function $\psi_{\stdFace^i,\zz_i}$ has the property that if $\gamma_1,\gamma_2 \in \RR_+$ then  the relations
\begin{align*}
\yy_i\in \spanVec\stdFace^{i},\quad \dist(\yy_i,\stdFace^{i}) \leq \gamma_1,\quad \inProd{\yy_i}{\zz_i} \leq \gamma_2
\end{align*}
imply $\dist(\yy_i,\stdFace^i\cap \{\zz_i\}^\perp) \leq  \psi_{\stdFace^i,\zz_i}(\max_{1\leq j \leq 2}\{\gamma_j \},\norm{\yy_i})$.
Therefore, from \eqref{eq:f1}, \eqref{eq:revisef1} and the monotonicity of
$\psi_{\stdFace^i,\zz_i}$, we have upon recalling $\xx\in \spanVec\stdFace$ that
\begin{equation}\label{eq:x_df}
\dist(\xx_i,\stdFace^i\cap \{\zz_i\}^\perp) \leq  \psi_{\stdFace^i,\zz_i}(\max\{1,\hat\kappa+\|\zz\|\}\epsilon,\norm{\xx_i}).
\end{equation}
Finally, from \eqref{eq:x_df}, we conclude that
\begin{align}\notag
\dist(\xx, \stdFace\cap \{\zz\}^\perp) \le
\sum _{i=1}^m{\dist({\xx}_i, \stdFace^i \cap \{\zz_i\}^\perp)} \leq \sum _{i=1}^m\psi_{\stdFace^i,\zz_i}(\max\{1,\hat\kappa+\|\zz\|\}\epsilon,\norm{\xx}),
\end{align}
where we also used the monotonicity of $\psi_{\stdFace^i,\zz_i}$ for the last inequality.
This completes the proof.
%We have shown that for every $\rho > 0$, there exists a constant $M_{\rho}$ such that
%if $\norm{\xx} \leq \rho$, $\xx\in \spanVec\stdCone$, and $\xx$ and $\epsilon$ satisfy
%\eqref{eq:prod}, then \eqref{eq:res_sum} holds.
%From the monotonicity of the facial residual functions, if $M$ is greater than $M_{\rho}$, then
%\eqref{eq:res_sum} is still valid if we replace
%$M_{\rho}$ by $M$.
%Therefore, there exists a monotone nondecreasing function $\sigma:\RR_+ \to \RR_+$ with the property that \eqref{eq:res_sum} holds if
%$M_{\rho}$ is replaced\footnote{\label{foot:sigma}Let $\sigma _1 \coloneqq M_1$ and for every positive integer $i > 1$ let $\sigma _{i} \coloneqq  \max\{\sigma_{i-1},M_i\}$. Then, we define $\sigma:\RR_+ \to \RR_+$ to be the function such that $\sigma(\rho) \coloneqq {\sigma _{\ceil{\rho}}} $, where $\ceil{\rho}$ is the smallest integer greater or equal than $\rho$. Then $\sigma$ has the required properties.} by $\sigma(\rho)$.
%We conclude that
%\begin{align*}
%\dist(\xx, \stdFace\cap \{\zz\}^\perp) \leq \psi _{\stdFace,\zz}(\epsilon,\norm{\xx}) = \sum _{i=1}^m \psi_{\stdFace^i,\zz_i}(\sigma(\norm{\xx})\max\{\epsilon,\amf(2\epsilon) \},\norm{\xx}) ,
%\end{align*}	
%where the function on right-hand-side has all the monotonicity properties required of facial residual functions.
%
%Finally, if all the $\stdFace^i$ are $1$-amenable, the fact that $\sigma(\cdot)$ can be taken to be a constant function follows from \cite[Proposition~17]{L17}.
\end{proof}

\section{The exponential cone}\label{sec:exp_cone}
In this section, we will use all the techniques developed so far to obtain error bounds for the 3D exponential cone $\expCone$. We will start with a study of its facial structure in Section~\ref{sec:facial_structure}, then we will compute its {\oneFRF}s in Section~\ref{sec:exp_frf}.
Finally, error bounds will be presented in Section~\ref{sec:exp_err}. In Section~\ref{sec:odd}, we summarize odd behaviour found in the facial structure of the exponential cone.

\subsection{Facial structure}\label{sec:facial_structure}

Recall that the exponential cone is defined as follows:
\begin{align}\label{d:Kexp}
K_{\exp}:=&\left \{(x,y,z)\;|\;y>0,z\geq ye^{x/y}\right \} \cup \left \{(x,y,z)\;|\; x \leq 0, z\geq 0, y=0  \right \}.
\end{align}

\begin{figure}
	\begin{center}
		\begin{tikzpicture}
		[scale=0.76]
		\node[anchor=south west,inner sep=0] (image) at (0,0) {\includegraphics[width=.75\linewidth]{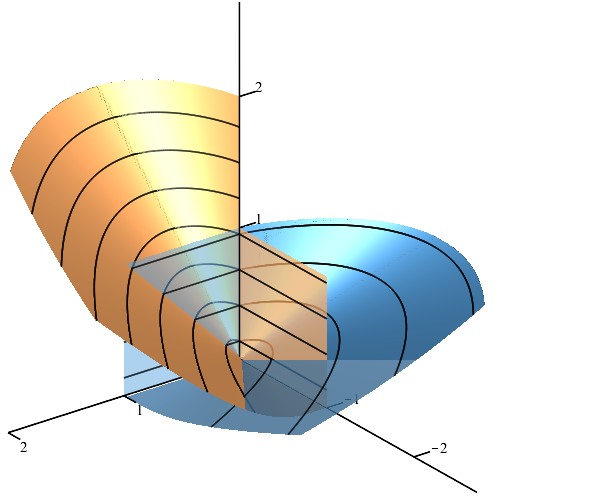}};
		\begin{scope}[x={(image.south east)},y={(image.north west)}]
		
		%F beta>1
		%\node [left,brown] at (0.05,0.4) {$K_{\Fhat_{\beta>1}}$};
		%\draw [->,thick] (0.05,0.4) -- (0.1,0.425);
		
		%Kexp label
		\node [left,brown] at (0,0.67) {$\mathbf{K_{{\bf \rm exp}}}$};
		
		%F 0<beta<1
		%\node [above] at (0.04,0.82) {$\Fhat_{\beta \in \left(0,1 \right) }$};
		%\draw [->,thick] (0.04,0.82) -- (0.05,0.76);
		
		%F beta in R
		\node [above] at (0.12,.88) {$\bigcup_{\beta \in \RR}\Fhat_{\beta}$};
		\draw [->,thick] (0.12,.88) -- (0.17,0.86);
		\draw [->,thick] (0.12,.88) -- (0.09,0.81);
		
		%F beta<0
		%\node [above] at (0.25,.88) {$\Fhat_{\beta <0 }$};
		%\draw [->,thick] (0.25,.88) -- (0.25,0.85);
		
		%F beta=infty
		\node [below] at (0.46,.10) {$\Fhat_{\infty}$};
		\draw [->] (0.46,.10) -- (0.46,0.21);
		
		%F beta=-infty
		\node [above] at (0.48,.6) {$\Fhat_{-\infty}$};
		\draw [->,thick] (0.48,.6) -- (0.48,0.5);
		
		%z beta>1
		%\node [above] at (0.6,.62) {$\zz_{\beta>1}$};
		%\draw [->,thick] (0.6,.62) -- (0.6,0.58);
		
		%z beta=1
		%\node [above] at (0.75,.60) {$\zz_{\beta=1}$};
		%\draw [->,thick] (0.75,.60) -- (0.75,0.55);
		
		%K*
		\node [blue] at (0.88,.4) {$\mathbf{K}_{{\rm \bf exp}}^*$};
		%\draw [->,thick] (0.88,.56) -- (0.88,0.51);
		
		%z beta<0
		\node [right] at (0.70,.19) {$\bigcup_{\beta \in \RR}\zz_{\beta}$};
		\draw [->,thick] (0.70,.19) -- (0.63,0.20);
		\draw [->,thick] (0.70,.19) -- (0.72,0.25);
		
		%z beta=0
		%\node [right] at (0.87,.25) {$\zz_{\beta =0}$};
		%\draw [->,thick] (0.87,.25) -- (0.83,0.26);
		
		%z beta=-infty
		\node [below] at (0.27,.10) {$\zz_{\beta =-\infty}$};
		\draw [->,thick] (0.27,.10) -- (0.27,0.21);
		
		%z beta=infty
		\node [right] at (0.45,.72) {$\zz_{\beta =\infty}, \, \cone\{\zz_{\infty}\} = \Fhat_{ne}$};
		\draw [->,thick] (0.45,.72) -- (0.42,0.72);
		
		\node at (0.05,0.18) {$y$-axis};
		\node at (0.85,0.05) {$x$-axis};
		\node at (0.45,0.95) {$z$-axis};
		
		\end{scope}
		\end{tikzpicture}
	\end{center}
	\caption{The exponential cone and its dual, with faces and exposing vectors labeled according to our index $\beta$.}\label{fig:exp}
\end{figure}

%\subsection{The Dual Cone}

\noindent Its dual cone is given by
\begin{align*}
K_{\exp}^*:=&\left \{(x,y,z)\;|\;x<0, ez\geq -xe^{y/x}\right \} \cup \left \{(x,y,z)\;|\; x = 0, z\geq 0, y\geq0  \right \}.
\end{align*}
It may therefore be readily seen that $K_{\exp}^*$ is a scaled and rotated version of $K_{\exp}$. % stuff for the rest of this section is temporarily moved to be after \end{document}
In this subsection, we will describe the
nontrivial faces of  $K_{\exp}$; see Figure~\ref{fig:exp}.
We will show that we have the following types of nontrivial faces:
\begin{enumerate}[{\rm (a)}]
	\item infinitely many exposed extreme rays (1D faces) parametrized by $\beta \in \RR$ as follows:
	\begin{equation}\label{eq:exp_1d_beta}
	\Fhat_\beta  \coloneqq
	\left\{\left(-\beta y+y,y,e^{1-\beta}y \right)\;\bigg|\;y \in [0,\infty )\right\}.
	\end{equation}
	\item a single ``exceptional'' exposed extreme ray denoted by $\Fhat_{\infty}$:
	\begin{equation}\label{eq:exp_1d_exc}
	\Fhat_{\infty}\coloneqq \{(x,0,0)\;|\;x\le0 \}.
	\end{equation}
	\item a single non-exposed extreme ray denoted by
	$\Fhat _{ne} $:
		\begin{equation}\label{eq:exp_1d_ne}
	\Fhat _{ne}\coloneqq \{(0,0,z)\;|\;z\ge0 \}.
	\end{equation}
	\item a single 2D exposed face denoted by $\Fhat_{-\infty}$:
	\begin{equation}\label{eq:exp_2d}
	\Fhat_{-\infty}\coloneqq \{(x,y,z)\;|\;x\le0, z\ge0, y=0\},
	\end{equation}
	where we note that $\Fhat_{\infty}$ and  $\Fhat _{ne}$ are the extreme rays of $\Fhat_{-\infty}$.
\end{enumerate}
Notice that except for the case (c), all faces are
exposed and thus arise as an intersection $\{\zz\}^\perp\cap K_{\exp}$ for some $\zz \in K_{\exp}^*$.
To establish the above characterization, we start by examining how the components of $\zz$ determine
the corresponding exposed face.

\subsubsection{Exposed faces}\label{sec:exposed}
Let $\zz\in \expCone^*$ be such that $\{\zz\}^\perp\cap \expCone$ is a nontrivial face of $\expCone$. Then $\zz\neq 0$ and $\zz\in \partial K_{\exp}^*$. We consider the following cases.

\noindent \underline{$z_x < 0$}:
Since $\zz\in \partial K_{\exp}^*$, we must have $z_z e = -z_x e^{\frac{z_y}{z_x}}$ and hence
%Notice that $\zz$ is given by
\begin{equation}\label{d:pz}
\zz=(z_x,z_y,-z_x e^{\frac{z_y}{z_x}-1}).
\end{equation}
%We will first describe $\{\zz\}^\perp$.
Since $z_x \neq 0$, we see that $\qq \in \{\zz\}^\perp$ if and only if
\begin{equation}\label{qdotp1}
q_x+ q_y\left(\frac{z_y}{z_x}\right)-q_z e^{\frac{z_y}{z_x}-1} = 0.
\end{equation}
Solving \eqref{qdotp1} for $q_z$ and letting $\beta:=\frac{z_y}{z_x}$ to simplify the exposition, we have
\begin{equation}\label{qdotp4}
q_z = e^{1-\frac{z_y}{z_x}}\left(q_x+q_y\cdot \frac{z_y}{z_x}  \right) = e^{1-\beta}\left(q_x+q_y\beta  \right) \;\; \text{with}\;\;\beta:=\frac{z_y}{z_x}\in (-\infty,\infty).
\end{equation}	
Thus,
%\begin{equation}\label{d:pperp}
we obtain that $\{\zz\}^\perp = \left \{ \left(x,y, e^{1-\beta}\left(x+y\beta  \right) \right)\;\big|\; x,y \in \RR  \right \}$.
%\end{equation}
Combining this with the definition of $K_{\exp}$ and the fact that $\{\zz\}^\perp$ is a supporting hyperplane (so that $K_{\exp} \cap \{\zz\}^\perp = \partial K_{\exp}\cap \{\zz\}^\perp$) yields
\begin{equation}\label{capperp}
\begin{aligned}
&K_{\exp} \cap \{\zz\}^\perp = \partial K_{\exp}\cap \{\zz\}^\perp \\
=& \left \{ \left(x,y, e^{1-\beta}\left(x+y\beta  \right) \right) \;\;\big|\;\; e^{1-\beta}\left(x+y\beta  \right) = ye^{\frac{x}{y}},y>0   \right \} \\
& \bigcup \left \{ \left(x,y, e^{1-\beta}\left(x+y\beta  \right) \right) \;\;\big|\;\; x \leq 0, e^{1-\beta}\left(x+y\beta  \right) \geq 0,y=0   \right \} \\
=&\left \{ \left(x,y, e^{1-\beta}\left(x+y\beta  \right) \right) \;\;\big|\;\; e^{1-\beta}\left(x+y\beta  \right) = ye^{\frac{x}{y}},y>0   \right \} \cup \{0\}.
\end{aligned}
\end{equation}
We now refine the above characterization in the next proposition.
\begin{proposition}[Characterization of $\Fhat_\beta$, $\beta\in \RR$]\label{d:FcapbdKexp}
	Let $\zz \in K_{\exp}^*$ satisfy $\zz=(z_x,z_y,z_z)$, where $z_z e = -z_x e^{\frac{z_y}{z_x}}$ and $z_x <0$. Define $\beta=\frac{z_y}{z_x}$ as in \eqref{qdotp4} and let $\Fhat_\beta:= K_{\exp}\cap \{\zz\}^\perp$. Then
	\begin{align*}
	\Fhat_\beta  =
	\left\{\left(-\beta y+y,y,e^{1-\beta}y \right)\;\bigg|\;y \in [0,\infty )\right\}.
	\end{align*}
\end{proposition}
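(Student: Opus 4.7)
The plan is to start from the representation of $\Fhat_\beta$ already derived in \eqref{capperp}, namely,
\[
\Fhat_\beta = \left\{\left(x,y, e^{1-\beta}(x+y\beta)\right) \;\Big|\; e^{1-\beta}(x+y\beta) = y e^{x/y},\ y>0\right\} \cup \{0\},
\]
and then reduce the defining equation to a one-parameter identity that has a unique solution via a standard logarithmic inequality.

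More precisely, fix $y>0$ and set $t := x/y$. Dividing the equation $e^{1-\beta}(x+y\beta) = y e^{x/y}$ by $y$ yields $e^{1-\beta}(t+\beta) = e^{t}$. Since the left-hand side must be positive (it equals $e^{t}$), we have $t+\beta > 0$, and we may take logarithms to obtain
\[
\ln(t+\beta) = t + \beta - 1.
\]
At this point I would invoke the elementary inequality $\ln u \le u - 1$ for all $u > 0$, with equality if and only if $u = 1$. Applied to $u = t+\beta$, this forces $t+\beta = 1$, i.e., $x = (1-\beta)y$. Substituting back into the third coordinate gives $z = e^{1-\beta}(x+y\beta) = e^{1-\beta} y$. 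Hence every element of $\Fhat_\beta$ with $y > 0$ is of the form $((1-\beta)y,\, y,\, e^{1-\beta}y)$ for some $y > 0$, and the origin corresponds to $y = 0$ in the claimed parametrization.

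For the reverse inclusion, I would simply check that every point of the form $((1-\beta)y,\, y,\, e^{1-\beta}y)$ with $y \ge 0$ satisfies the defining equation $e^{1-\beta}(x+y\beta)=ye^{x/y}$ (trivially for $y=0$, and by direct substitution for $y>0$, since $x/y = 1-\beta$), so it lies in $K_{\exp}\cap\{\zz\}^\perp = \Fhat_\beta$. No step here is a serious obstacle: once \eqref{capperp} is in hand, the only substantive ingredient is the tight bound $\ln u \le u-1$, which pins down the unique ray on the boundary exposed by $\zz$.
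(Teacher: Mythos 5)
Your proposal is correct, and it is essentially the same argument as in the paper: both proofs start from \eqref{capperp}, divide by $y>0$ to reduce the defining equation to a one-variable transcendental equation, and pin down the unique solution $x/y = 1-\beta$ using a standard elementary fact before substituting back. The only cosmetic difference is the fact you invoke --- the paper rewrites the equation as $(-s)e^{-s}=-e^{-1}$ with $s=x/y+\beta$ and appeals to the behaviour of $t\mapsto te^t$, whereas you take logarithms and use $\ln u\le u-1$ with equality iff $u=1$; these are two faces of the same inequality.
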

\begin{proof}
	Let $\Omega := \left\{\left(-\beta y+y,y,e^{1-\beta}y \right)\;\big|\;y \in [0,\infty )\right\}$. In view of \eqref{capperp}, we can check that $\Omega\subseteq \Fhat_\beta$. To prove the converse inclusion, pick any $\qq=\left(x,y,e^{1-\beta}(x+y\beta)\right) \in \Fhat_\beta$. We need to show that $\qq\in \Omega$.
	
	To this end, we note from \eqref{capperp} that if $y = 0$, then necessarily $\qq = \mathbf{0}$ and consequently $\qq\in \Omega$. On the other hand, if $y > 0$, then \eqref{capperp} gives $ye^{x/y}= (x+\beta y)e^{1-\beta}$. Then we have the following chain of equivalences:
	\begin{equation}\label{xneq0case}
	\begin{array}{crl}
	& ye^{x/y}&\!= (x+\beta y)e^{1-\beta} \\
	%	\iff & e^{x/y} &\!\!\!= (x/y+\beta)e^{1-\beta} \\
	\iff & -e^{-1} &\!=-(x/y+\beta)e^{-(x/y+\beta)} \\
	\overset{\rm (a)}\iff & -x/y-\beta&\!=-1 \\
	\iff & x&\!=y-y\beta,
	\end{array}
	\end{equation}
	where (a) follows from the fact that the function $t\mapsto te^t$ is strictly increasing on $[-1,\infty)$.
	Plugging the last expression back into $\qq$, we may compute
	\begin{equation}\label{xneq0case6}
	q_z = e^{1-\beta}(x+y\beta) = e^{1-\beta}(y-y\beta+y\beta)=ye^{1-\beta}.
	\end{equation}
	Altogether, \eqref{xneq0case}, \eqref{xneq0case6} together with $y>0$ yield
	\begin{equation*}
	\qq=\left(y-\beta y ,y,ye^{1-\beta} \right)\in \Omega.
	\end{equation*}
	This completes the proof
\end{proof}
Next, we move on to the two remaining cases.

\noindent{\underline{$z_x = 0$, $z_z > 0$}}: Notice that $\qq\in \expCone$ means that $q_y \ge 0$ and $q_z\ge 0$. Since $z_z > 0$ and $z_y \ge 0$, in order to have $\qq\in \{\zz\}^\perp$, we must have $q_z = 0$. The the definition of $K_{\exp}$ also forces $q_y = 0$ and hence
\begin{equation}\label{Finfinity}
\{\zz\}^\perp\cap K_{\exp}=\{(x,0,0)\;|\; x \leq 0\} =:\Fhat_{\infty}.
\end{equation}
This one-dimensional face is exposed by any hyperplane with normal vectors coming from the set $\{(0,z_y,z_z):\; z_y\ge 0, z_z > 0)\}$.

\noindent{\underline{$z_x = 0$, $z_z = 0$}}: In this case, we have $z_y > 0$. In order to have $\qq\in \{\zz\}^\perp$, we must have $q_y = 0$. Thus
\begin{equation}\label{Fneginfinity}
\{\zz\}^\perp\cap K_{\exp} = \{(x,y,z)\;|\;x\le0, z\ge0, y=0\} =: \Fhat_{-\infty},
\end{equation}
which is the unique two-dimensional face of $K_{\exp}$.

\subsubsection{The single non-exposed face and completeness of the classification}
The face $\Fhat_{ne}$ is non-exposed because, as shown in
Proposition~\ref{d:FcapbdKexp}, \eqref{Finfinity} and \eqref{Fneginfinity}, it never arises as an intersection
of the form $\{\zz\}^\perp\cap K_{\exp}$, for $\zz \in K_{\exp}^*$.

We now show that all nontrivial faces of $K_{\exp}$ were
accounted for in \eqref{eq:exp_1d_beta}, \eqref{eq:exp_1d_exc}, \eqref{eq:exp_1d_ne}, \eqref{eq:exp_2d}.
First of all, by the discussion in Section~\ref{sec:exposed}, all nontrivial exposed faces must be among the ones in \eqref{eq:exp_1d_beta}, \eqref{eq:exp_1d_exc} and  \eqref{eq:exp_2d}.
So, let $\stdFace$ be a non-exposed face of $K_{\exp}$.
Then, it must be contained in a nontrivial exposed face of $K_{\exp}$.\footnote{This is a general fact. A proper face of a closed convex cone is contained in a proper exposed face, e.g., \cite[Proposition~3.6]{BW81}.}
%{Given a closed convex set $C$, every non-exposed face is contained in a proper exposed face. A sketch is as follows. If $F \face C$ is non-exposed, then we have that $F$ is properly contained in $C$, which implies that $\reInt F \cap \reInt C = \emptyset$. Then, $F$ and $C$ can be properly separated by a hyperplane $H$, e.g., \cite[Theorem~11.3]{RT97}. The fact that $F \subseteq C$ and the definition of proper separation imply that $H$ must be a supporting hyperplane of $C$ and $C\cap H \neq C$. Then $C \cap H$ is a proper exposed face of $C$ containing $F$.}
	Therefore, $\stdFace$ must be a proper face of the unique 2D face \eqref{eq:exp_2d}.
This implies that  $\stdFace$ is one of the extreme rays of
\eqref{eq:exp_2d}: $\Fhat_{\infty}$ or $\Fhat_{ne}$. By
assumption, $\stdFace$ is non-exposed, so it must be
$\Fhat_{ne}$.

\subsection{{\OneFRF}s}\label{sec:exp_frf}
In this subsection, we will use the machinery developed in Section~\ref{sec:frf} to obtain the {\oneFRF}s for $K_{\exp}$.

Let us first discuss how the discoveries were originally made, and how that process motivated the development of the framework we built in Section~\ref{sec:frf}. The FRFs proven here were initially found by using the characterizations of Theorem~\ref{thm:1dfacesmain} and Lemma~\ref{lem:infratio} together with numerical experiments. Specifically, we used \emph{Maple}\footnote{Though one could use any suitable computer algebra package.} to numerically evaluate limits of relevant sequences \eqref{infratiohk}, as well as plotting lower dimensional slices of the function $\vv \mapsto \frakg(\|\vv-\ww\|)/\|\ww-\uu\|$, where $\ww$ and $\uu$ are defined similarly as in \eqref{gammabetaeta}.
	
A natural question is whether it might be simpler to change coordinates and work with the nearly equivalent $\ww \mapsto \frakg(\|\vv-\ww\|)/\|\ww-\uu\|$, since $\ww \in \{\zz \}^\perp$. However, $P_{\{\zz\}^\perp}^{-1}\{\ww\}\cap \partial \stdCone$ may contain multiple points, which creates many challenges. We encountered an example of this when working with the exponential cone, where the change of coordinates from $\vv$ to $\ww$ necessitates the introduction of the two real branches of the Lambert $\W$ function (see, for example, \cite{BL2016,bauschke2018proximal,burachik2019generalized} or \cite{Se15} for the closely related Wright Omega function). With terrible effort, one can use such a parametrization to prove the FRFs for $\Fhat_{\beta}, \beta \in \left[-\infty,\infty \right]\setminus \{\hat{\beta}:=-\W_{{\rm principal}}(2e^{-2})/2 \}$. However, the change of branches inhibits proving the result for the exceptional number $\hat{\beta}$. The change of variables to $\vv$ cures this problem by obviating the need for a branch function in the analysis; see \cite{WWJD} for additional details. This is why we present Theorem~\ref{thm:1dfacesmain} in terms of $\vv$. Computational investigation also pointed to the path of proof, though the proof we present may be understood without the aid of a computer.

\subsubsection{$\Fhat_{-\infty}$: the unique 2D face}
\label{sec:freas_2d}

Recall the unique 2D face of $K_{\exp}$:
$$
\stdFace_{-\infty}:=\{(x,y,z)\;|\; x\leq 0, z\geq 0, y=0 \}.
$$
Define the piecewise modified Boltzmann--Shannon entropy $\frakg_{-\infty}:\RP\to \RP$ as follows:
\begin{equation}\label{d:entropy}
\frakg_{-\infty}(t) := \begin{cases}
0 & \text{if}\;\; t=0,\\
-t \ln(t) & \text{if}\;\; t\in \left(0,1/e^2\right],\\
t+ \frac{1}{e^2} & \text{if}\;\; t>1/e^2.
\end{cases}
\end{equation}
For more on its usefulness in optimization, see, for example, \cite{bauschke2018proximal,burachik2019generalized}. We note that $\frakg_{-\infty}$ is monotone increasing and there exists $L\ge 1$ such that the following inequalities hold for every $t \in \RP$ and $M > 0$:\footnote{The third relation in \eqref{d:entropy_p} is derived from the second relation and the monotonicity of $\frakg_{-\infty}$ as follows: $\frakg_{-\infty}(Mt) = \frakg_{-\infty}(2^{\log_2 M}t )\le \frakg_{-\infty}(2^{\lceil |\log_2 M|\rceil}t )\le L^{\lceil |\log_2 M|\rceil}\frakg_{-\infty}(t)\le L^{1 +|\log_2 M|}\frakg_{-\infty}(t)$.}
\begin{equation}\label{d:entropy_p}
|t| \leq \frakg_{-\infty}(t), \quad \frakg_{-\infty}(2t) \leq L\frakg_{-\infty}(t),\quad \frakg_{-\infty}(Mt) \leq L^{1+|\log_2(M)|}\frakg_{-\infty}(t).
\end{equation}
%In this subsection, we denote
%the $\gamma_{\zz,\eta},\kappa_{\zz,\eta}$ defined in Theorem~\ref{thm:1dfacesmain} by $\gamma_{\zz,\eta}^{\frakg}$ and $\kappa_{\zz,\eta}^{\frakg}$ respectively, in order to emphasize the dependency on $\frakg$.
With that, we prove in the next theorem that $\gamma_{\zz,\eta}$  is positive for $\Fhat_{-\infty}$, which implies that an \emph{entropic error bound} holds.

\begin{theorem}[Entropic error bound concerning $\Fhat_{-\infty}$]\label{thm:entropic}
	Let $\zz\in K_{\exp}^*$ with $z_x=z_z=0$ and $z_y > 0$ so that $\{\zz \}^\perp \cap K_{\exp}=\Fhat_{-\infty}$ is the two-dimensional face of $K_{\exp}$.
	Let $\eta > 0$ and let $\gamma_{\zz,\eta}$ be defined as in \eqref{gammabetaeta} with $\frakg = \frakg_{-\infty}$ in \eqref{d:entropy}. Then $\gamma_{\zz,\eta} \in (0,\infty]$ and
	\begin{equation}\label{eq:entropic}
	\dist(\qq,\Fhat_{-\infty})\le \max\{2,2\gamma_{\zz,\eta}^{-1}\}\cdot\frakg_{-\infty}(\dist(\qq,K_{\exp}))\ \ \ \mbox{whenever $\qq\in \{\zz\}^\perp\cap B(\eta)$.}
	\end{equation}
\end{theorem}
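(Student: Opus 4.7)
The plan is to invoke Theorem~\ref{thm:1dfacesmain}(i) with $\alpha = 1$ and $\frakg = \frakg_{-\infty}$, whose hypotheses hold because \eqref{d:entropy_p} gives $\frakg_{-\infty}(t)\ge t$. Once $\gamma_{\zz,\eta} > 0$ is in hand, the claimed bound \eqref{eq:entropic} follows immediately, since $\eta^{1-\alpha} = 1$ makes $\kappa_{\zz,\eta} = \max\{2,2\gamma_{\zz,\eta}^{-1}\}$. Because both $\{\zz\}^\perp$ and the infimum defining $\gamma_{\zz,\eta}$ depend only on the direction of $\zz$, I would normalize to $\zz = (0,1,0)$, so $\{\zz\}^\perp = \{(x,y,z) : y = 0\}$.

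To establish $\gamma_{\zz,\eta} > 0$, I would argue by contradiction using Lemma~\ref{lem:infratio}: if $\gamma_{\zz,\eta} = 0$, there would exist a sequence $\vv^k\in \bd\expCone \cap B(\eta)\setminus \Fhat_{-\infty}$ with $\vv^k\to \bv\in \Fhat_{-\infty}$, $\ww^k := P_{\{\zz\}^\perp}\vv^k\to \bv$, and $\uu^k := P_{\Fhat_{-\infty}}\ww^k\ne \ww^k$, such that $\frakg_{-\infty}(\|\vv^k-\ww^k\|)/\|\ww^k-\uu^k\|\to 0$. From \eqref{d:Kexp} and \eqref{eq:exp_2d}, any boundary point of $\expCone$ outside $\Fhat_{-\infty}$ must satisfy $y>0$ and $z=ye^{x/y}$, so I would write $\vv^k = (x_k,y_k,y_ke^{x_k/y_k})$ with $y_k>0$. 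Then $\ww^k = (x_k,0,y_ke^{x_k/y_k})$; since the third coordinate is positive, the projection onto $\Fhat_{-\infty}$ equals $(\min\{x_k,0\},0,y_ke^{x_k/y_k})$, and the condition $\uu^k\ne \ww^k$ forces $x_k>0$, whence $\uu^k=(0,0,y_ke^{x_k/y_k})$, $\|\vv^k-\ww^k\|=y_k$, and $\|\ww^k-\uu^k\|=x_k$.

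The convergence $\vv^k\to \bv\in \Fhat_{-\infty}$ forces $y_k\to 0^+$ and $x_k\to 0^+$. The ball constraint $\|\vv^k\|\le \eta$ gives $y_ke^{x_k/y_k}\le \eta$, and taking logarithms yields $x_k\le y_k\ln(\eta/y_k)$. For $k$ large enough, $y_k\le 1/e^2$, so $\frakg_{-\infty}(y_k) = -y_k\ln y_k$ and
\begin{equation*}
\frac{\frakg_{-\infty}(y_k)}{x_k} \;=\; \frac{-y_k\ln y_k}{x_k} \;\ge\; \frac{-\ln y_k}{\ln\eta-\ln y_k} \;\longrightarrow\; 1,
\end{equation*}
contradicting the assumption that the ratio tends to $0$. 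The main obstacle is recognizing the entropic scale hidden in this estimate: the curved boundary $z=ye^{x/y}$ together with $\|\vv^k\|\le \eta$ compels $x_k$ to shrink no faster than $y_k|\ln y_k|$, which is exactly the modulus $\frakg_{-\infty}(y_k)$. Identifying the correct parametrization (in terms of $\vv^k$ rather than $\ww^k$, to avoid Lambert-$\W$ branch issues, as noted in the excerpt) is what makes the limit computation transparent; everything else is a direct application of Theorem~\ref{thm:1dfacesmain} and Lemma~\ref{lem:infratio}.
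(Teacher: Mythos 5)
Your proof is correct, and the skeleton is the same as the paper's: invoke Theorem~\ref{thm:1dfacesmain}\ref{thm:1dfacesmaini} with $\alpha=1$ (so $\kappa_{\zz,\eta}=\max\{2,2\gamma_{\zz,\eta}^{-1}\}$), then rule out $\gamma_{\zz,\eta}=0$ via Lemma~\ref{lem:infratio}, and you arrive at the same formulas $\|\vv^k-\ww^k\|=y_k$ and $\|\ww^k-\uu^k\|=x_k=y_k\ln(z_k/y_k)$. Where you genuinely diverge is in the final estimate. The paper splits into two cases depending on whether $\bar v_z>0$ or $\bar v_z=0$: in the first it takes the exact limit $\frac{-\ln y_k}{\ln z_k - \ln y_k}\to 1$, and in the second it uses $z_k>y_k$ (from $\ww^k\ne\uu^k$) together with $z_k<1$ to show the ratio exceeds $1$ outright. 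You instead feed the ball constraint $z_k\le\eta$ into the logarithm to get the single lower bound
\begin{equation*}
\frac{\frakg_{-\infty}(y_k)}{x_k}=\frac{-\ln y_k}{\ln z_k - \ln y_k}\ge\frac{-\ln y_k}{\ln\eta-\ln y_k}\longrightarrow 1,
\end{equation*}
valid in both cases at once. This unification is a mild but real streamlining: it trades the paper's case analysis for one extra inequality that comes free from $\vv^k\in B(\eta)$. (You still need the sign information $x_k>0$, hence $z_k>y_k$, to know that $\ln z_k-\ln y_k>0$ and that the inequality points the right way — you do have this, it is exactly the $\uu^k\ne\ww^k$ condition — but it is worth flagging explicitly rather than leaving implicit.) Both arguments are correct and of comparable length; yours is slightly cleaner, the paper's is slightly more transparent about what happens in the degenerate limit $\bar v_z=0$.
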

\begin{proof}
    In view of Lemma~\ref{lem:infratio}, take any
	$\bv \in \Fhat_{-\infty}$ and a sequence $\{\vv^k\}\subset \bd K_{\exp}\cap B(\eta) \backslash \Fhat_{-\infty}$ such that
	\begin{equation}\label{infratiohk_contradiction_neg_infty}
	\underset{k \rightarrow \infty}{\lim}\vv^k = \underset{k \rightarrow \infty}{\lim}\ww^k =\bv,%\ \ \ {\rm and}\ \ \ \lim_{k\rightarrow\infty} \frac{\|\ww^k - \vv^k\|^\frac12}{\|\ww^k- \uu^k\|}	= 0,
	\end{equation}
	where $\ww^k = P_{\{\zz\}^\perp}\vv^k$, $\uu^k = P_{\Fhat_{-\infty}}\ww^k$, and $\ww^k\neq \uu^k$. We will show that \eqref{infratiohkb} does not hold for $\frakg = \frakg_{-\infty}$.
	
	Since $\vv^k \notin \Fhat_{-\infty}$, in view of \eqref{d:Kexp} and \eqref{Fneginfinity}, we have $v^k_y>0$ and
	\begin{equation}\label{formulav_entropic}
	\vv^k = (v^k_x,v^k_y,v^k_ye^{v^k_x/v^k_y}) = (v^k_y\ln(v^k_z/v^k_y),v^k_y,v^k_z),
	\end{equation}
	where the second representation is obtained by solving for $v^k_x$ from $v^k_z = v^k_ye^{v^k_x/v^k_y} > 0$. Using the second representation in \eqref{formulav_entropic}, we then have
	\begin{equation}\label{formulawu_entropic}
	\ww^k = (v^k_y\ln(v^k_z/v^k_y),0,v^k_z)\ \ {\rm and}\ \ \uu^k = (0,0,v^k_z);
	\end{equation}
	here, we made use of the fact that $\ww^k\neq \uu^k$, which implies that $v^k_y\ln(v^k_z/v^k_y) > 0$ and thus the resulting formula for $\uu^k$. In addition, we also note from $v^k_y\ln(v^k_z/v^k_y) > 0$ (and $v^k_y>0$) that
	\begin{equation}\label{inequality_entropic}
	v^k_z > v^k_y > 0.
	\end{equation}
    Furthermore, since $\bar \vv \in \Fhat_{-\infty}$, we see from \eqref{Fneginfinity} and \eqref{infratiohk_contradiction_neg_infty} that
    \begin{equation}\label{limitvyzero}
      \lim_{k\to\infty} v_y^k = 0.
    \end{equation}
	Now, using \eqref{formulav_entropic}, \eqref{formulawu_entropic}, \eqref{limitvyzero} and the definition of $\frakg_{-\infty}$, we see that for $k$ sufficiently large,
	\begin{equation}\label{e:entropic}
	\frac{\frakg_{-\infty}(\|\vv^k-\ww^k\|)}{\|\uu^k-\ww^k\|} = \frac{-v_y^k \ln(v_y^k)}{v_y^k\ln(v_z^k/v_y^k)} = \frac{-\ln(v_y^k)}{\ln(v_z^k)-\ln(v_y^k)}.
	\end{equation}
	We will show that \eqref{infratiohkb} does not hold for $\frakg = \frakg_{-\infty}$ in each of the following cases.
	\begin{enumerate}[(I)]
		\item\label{entropiccase1} $\bar{v}_z >0$.
		\item\label{entropiccaseinfty} $\bar{v}_z =0$.
	\end{enumerate}
	
    \noindent \ref{entropiccase1}: In this case, we deduce from \eqref{limitvyzero} and \eqref{e:entropic} that
	\begin{equation*}
	\lim_{k\rightarrow\infty} \frac{\frakg_{-\infty}(\|\vv^k-\ww^k\|)}{\|\uu^k-\ww^k\|} = 1.
	\end{equation*}
	Thus \eqref{infratiohkb} does not hold for $\frakg = \frakg_{-\infty}$.
	
	\noindent \ref{entropiccaseinfty}: By passing to a subsequence if necessary, we may assume that $v^k_z < 1$ for all $k$. This together with \eqref{inequality_entropic} gives $\frac{\ln(v_z^k)}{\ln(v_y^k)} \in (0,1)$ for all $k$. Thus, we conclude from \eqref{e:entropic} that for all $k$,
	\begin{align*}
	\frac{\frakg_{-\infty}(\|\vv^k-\ww^k\|)}{\|\uu^k-\ww^k\|} & = \frac{-\ln(v_y^k)}{\ln(v_z^k)-\ln(v_y^k)} = \frac{1}{1 - \frac{\ln(v_z^k)}{\ln(v_y^k)}}> 1.%\label{entropy_zero}
	\end{align*}
	Consequently, \eqref{infratiohkb} also fails for $\frakg = \frakg_{-\infty}$ in this case.
	
	Having shown that \eqref{infratiohkb} does not hold for $\frakg = \frakg_{-\infty}$ in any case, we conclude by Lemma~\ref{lem:infratio} that $\gamma_{\zz,\eta} \in \left(0,\infty \right]$.
	With that, \eqref{eq:entropic} follows from Theorem~\ref{thm:1dfacesmain} and \eqref{d:entropy_p}.
\end{proof}
Using Theorem~\ref{thm:entropic}, we can also show weaker H\"olderian error bounds.

\begin{corollary}\label{col:2d_hold}
Let $\zz\in K_{\exp}^*$ with $z_x=z_z=0$ and $z_y > 0$ so that $\{\zz \}^\perp \cap \expCone=\Fhat_{-\infty}$ is the two-dimensional face of $\expCone$.
Let $\eta>0$, $\alpha \in (0,1)$, and $\gamma_{\zz,\eta}$ be as in \eqref{gammabetaeta} with $\frakg = |\cdot|^\alpha$.
Then $\gamma_{\zz,\eta} \in (0,\infty]$ and
\begin{equation*}
	\dist(\qq,\Fhat_{-\infty})\le \max\{2\eta^{1-\alpha},2\gamma_{\zz,\eta}^{-1}\}\cdot\dist(\qq,K_{\exp})^\alpha\ \ \ \mbox{whenever $\qq\in \{\zz\}^\perp\cap B(\eta)$.}
\end{equation*}
\end{corollary}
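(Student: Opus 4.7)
The plan is to deduce Corollary~\ref{col:2d_hold} from Theorem~\ref{thm:entropic} by a direct comparison between the moduli $\frakg_{-\infty}$ and $|\cdot|^\alpha$ on bounded intervals, thereby avoiding a repeat of the case analysis in the proof of Theorem~\ref{thm:entropic}.

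First I would observe that, for any $\alpha \in (0,1)$,
\[
\lim_{t\to 0^+}\frac{\frakg_{-\infty}(t)}{t^\alpha} \;=\; \lim_{t\to 0^+}\bigl(-t^{1-\alpha}\ln t\bigr) \;=\; 0,
\]
and $t \mapsto \frakg_{-\infty}(t)/t^\alpha$ extends to a continuous nonnegative function on $[0,\infty)$ (with value $0$ at $0$). Consequently, on any bounded interval $[0,M]$ there is a constant $C_M>0$ with $\frakg_{-\infty}(t)\le C_M t^\alpha$ for all $t\in [0,M]$.

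Next I would bound the arguments that appear in the definition \eqref{gammabetaeta}. If $\vv\in \bd \expCone\cap B(\eta)\setminus \Fhat_{-\infty}$ and $\ww=P_{\{\zz\}^\perp}\vv$, then since $0\in \{\zz\}^\perp$, nonexpansiveness of projections gives $\|\ww-\vv\|=\dist(\vv,\{\zz\}^\perp)\le \|\vv\|\le \eta$. Therefore, denoting by $\gamma^{\mathrm{ent}}_{\zz,\eta}$ the quantity in \eqref{gammabetaeta} with $\frakg=\frakg_{-\infty}$ and by $\gamma_{\zz,\eta}$ the one with $\frakg=|\cdot|^\alpha$, the comparison above yields, for every admissible $\vv$,
\[
\frac{\frakg_{-\infty}(\|\ww-\vv\|)}{\|\ww-\uu\|} \;\le\; C_\eta \,\frac{\|\ww-\vv\|^\alpha}{\|\ww-\uu\|}.
\]
Taking the infimum over the admissible set gives $\gamma^{\mathrm{ent}}_{\zz,\eta}\le C_\eta\,\gamma_{\zz,\eta}$.

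By Theorem~\ref{thm:entropic}, $\gamma^{\mathrm{ent}}_{\zz,\eta}\in (0,\infty]$, so the displayed inequality forces $\gamma_{\zz,\eta}\in (0,\infty]$ as well. With $\frakg = |\cdot|^\alpha$ (which is monotone nondecreasing, vanishes at $0$, and satisfies $\frakg\ge |\cdot|^\alpha$ trivially), Theorem~\ref{thm:1dfacesmain}(i) then delivers the H\"olderian error bound
\[
\dist(\qq,\Fhat_{-\infty})\le \max\{2\eta^{1-\alpha},2\gamma_{\zz,\eta}^{-1}\}\,\dist(\qq,\expCone)^\alpha \quad\text{for all } \qq\in\{\zz\}^\perp\cap B(\eta),
\]
which is the claimed inequality. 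There is no real obstacle here; the only point deserving care is the bound $\|\ww-\vv\|\le \eta$, which secures that the comparison $\frakg_{-\infty}\le C_\eta|\cdot|^\alpha$ is applied inside a region where the constant $C_\eta$ is finite.
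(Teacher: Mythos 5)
Your proof is correct, and it takes a genuinely different route from the paper's. The paper argues by contradiction: assuming $\gamma_{\zz,\eta}=0$ for $\frakg=|\cdot|^\alpha$, it invokes Lemma~\ref{lem:infratio} to extract a \emph{convergent} sequence along which $\|\ww^k-\vv^k\|\to 0$, and then multiplies and divides by $\|\ww^k-\vv^k\|^\alpha$ to show that the entropic $\liminf$ is also zero, contradicting Theorem~\ref{thm:entropic}. Your argument avoids the sequential detour entirely: you observe that the arguments $\|\ww-\vv\|$ fed into $\frakg$ are always bounded by $\eta$, and on $[0,\eta]$ the function $\frakg_{-\infty}$ is dominated by $C_\eta\,|\cdot|^\alpha$ (by continuity of the extension of $t\mapsto\frakg_{-\infty}(t)/t^\alpha$ to $[0,\eta]$, which vanishes at $0$ when $\alpha<1$). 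Taking infima over the same admissible set gives $\gamma^{\mathrm{ent}}_{\zz,\eta}\le C_\eta\gamma_{\zz,\eta}$ outright, so positivity of the left-hand side transfers to the right. Both approaches hinge on the same asymptotic fact, $\frakg_{-\infty}(t)/t^\alpha\to 0$ as $t\downarrow 0$, but yours packages it as a uniform domination on a bounded interval rather than an asymptotic limit along a sequence; this is arguably more direct and also gives an explicit quantitative relation between the two $\gamma$'s (and hence between the two error-bound constants), which the contradiction argument does not. The final appeal to Theorem~\ref{thm:1dfacesmain}(i) with $\frakg=|\cdot|^\alpha$ is applied exactly as in the paper.
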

\begin{proof}
Suppose that $\gamma_{\zz,\eta} = 0$ and let sequences $\{\vv^k\},\{\ww^k\},\{\uu^k\}$ be as in Lemma~\ref{lem:infratio}. Then $\vv^k\neq \ww^k$ for all $k$ because $\{\vv^k\}\subset \expCone\backslash \Fhat_{-\infty}$, $\{\ww^k\}\subset\{\zz\}^\perp$, and $\Fhat_{-\infty} = \expCone\cap \{\zz\}^\perp$.
Since $\frakg_{-\infty}(t)/|t|^{\alpha} \downarrow 0 $ as $t \downarrow 0$ we have
\begin{equation*}
\liminf_{k\rightarrow\infty} \frac{\frakg_{-\infty}(\|\ww^k - \vv^k\|)}{\|\ww^k- \uu^k\|} =  \liminf_{k\rightarrow\infty} \frac{\frakg_{-\infty}(\|\ww^k - \vv^k\|)}{\|\ww^k- \vv^k\|^\alpha} \frac{\|\ww^k - \vv^k\|^{\alpha}}{\|\ww^k- \uu^k\|} = 0,
\end{equation*}
which contradicts Theorem~\ref{thm:entropic} because the quantity in \eqref{gammabetaeta} should be positive for $\frakg = \frakg _{-\infty}$.	
\end{proof}

Recalling \eqref{d:entropy_p}, we obtain {\oneFRF}s using Theorem~\ref{thm:entropic} and Corollary~\ref{col:2d_hold} in combination with
Theorem~\ref{thm:1dfacesmain}, Remark~\ref{rem:kappa} and Lemma~\ref{lem:facialresidualsbeta}.
\begin{corollary}[{\oneFRFs} concerning $\Fhat_{-\infty}$]\label{col:frf_2dface_entropic}
	Let $\zz\in K_{\exp}^*$ be such that $\{\zz \}^\perp \cap K_{\exp}=\Fhat_{-\infty}$ is the two-dimensional face of $K_{\exp}$.
	Let $\frakg = \frakg_{-\infty}$ in \eqref{d:entropy} or
	$\frakg = |\cdot|^\alpha$ for $\alpha\in (0,1)$.
	
	Let $\kappa_{\zz,t}$ be defined as in \eqref{haha2}. Then the function $\psi_{\stdCone,\zz}:\RR_+\times\RR_+\to \RP$ given by
	\begin{equation*}
	\psi_{\stdCone,\zz}(\epsilon,t):=\max \left\{\epsilon,\epsilon/\|\zz\| \right\} + \kappa_{\zz,t}\frakg(\epsilon +\max \left\{\epsilon,\epsilon/\|\zz\| \right\} )
	\end{equation*}
	is a {\oneFRFs} for $K_{\exp}$ and $\zz$. In particular,
	there exist $\kappa > 0$ and a nonnegative monotone nondecreasing function $\rho:\RR_+ \to \RR_+$ such that the function $\hat \psi_{\stdCone,\zz}$ given by $\hat \psi_{\stdCone,\zz}(\epsilon,t) \coloneqq \kappa \epsilon + \rho(t)\frakg(\epsilon)$ is a {\oneFRFs}   for $K_{\exp}$ and $\zz$.
\end{corollary}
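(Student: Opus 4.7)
The plan is to assemble this corollary directly from the machinery already developed, with no new geometric work required. First, I would invoke either Theorem~\ref{thm:entropic} (when $\frakg = \frakg_{-\infty}$) or Corollary~\ref{col:2d_hold} (when $\frakg = |\cdot|^\alpha$ with $\alpha \in (0,1)$) to conclude that $\gamma_{\zz,\eta} \in (0,\infty]$ for every $\eta > 0$. With this in hand, Theorem~\ref{thm:1dfacesmain}(i) yields the error bound
\[
\dist(\qq,\Fhat_{-\infty}) \le \kappa_{\zz,\eta}\,\frakg(\dist(\qq,K_{\exp}))\ \ \text{whenever $\qq\in \{\zz\}^\perp\cap B(\eta)$,}
\]
with an explicit $\kappa_{\zz,\eta}$ (namely $\max\{2,2\gamma_{\zz,\eta}^{-1}\}$ in the entropic case and $\max\{2\eta^{1-\alpha},2\gamma_{\zz,\eta}^{-1}\}$ in the power case).

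Next, Remark~\ref{rem:kappa} tells us that $\kappa_{\zz,\eta}$ is finite, nonnegative, and monotone nondecreasing in $\eta$ (for fixed $\zz$). Viewing $\eta$ as the variable $t$, this produces exactly the function $\kappa_{\zz,t}$ hypothesized by Lemma~\ref{lem:facialresidualsbeta} with $\stdFace = \Fhat_{-\infty}$ and the above $\frakg$. Applying Lemma~\ref{lem:facialresidualsbeta} then shows that
\[
\psi_{\stdCone,\zz}(\epsilon,t) = \max\{\epsilon,\epsilon/\|\zz\|\} + \kappa_{\zz,t}\,\frakg\bigl(\epsilon + \max\{\epsilon,\epsilon/\|\zz\|\}\bigr)
\]
is a facial residual function for $K_{\exp}$ and $\zz$ with respect to $K_{\exp}$, which is the first assertion.

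For the ``in particular'' part, set $M := \max\{1,1/\|\zz\|\}$, so that $\max\{\epsilon,\epsilon/\|\zz\|\} = M\epsilon$ and hence
\[
\psi_{\stdCone,\zz}(\epsilon,t) = M\epsilon + \kappa_{\zz,t}\,\frakg((1+M)\epsilon).
\]
When $\frakg = |\cdot|^\alpha$ we have $\frakg((1+M)\epsilon) = (1+M)^\alpha \frakg(\epsilon)$, and when $\frakg = \frakg_{-\infty}$ the third inequality in \eqref{d:entropy_p} gives $\frakg((1+M)\epsilon) \le L^{1+\log_2(1+M)}\frakg(\epsilon)$. In either case we obtain a bound of the form
\[
\psi_{\stdCone,\zz}(\epsilon,t) \le \kappa\,\epsilon + \rho(t)\,\frakg(\epsilon),
\]
with $\kappa := M$ and $\rho(t)$ equal to $\kappa_{\zz,t}$ times the appropriate constant depending only on $\|\zz\|$ (and $\alpha$ or $L$). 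Since $\kappa_{\zz,t}$ is monotone nondecreasing and nonnegative, so is $\rho$, and $\hat\psi_{\stdCone,\zz}(\epsilon,t) = \kappa\epsilon + \rho(t)\frakg(\epsilon)$ inherits the facial residual function properties by monotonic domination. There is no real obstacle here beyond carefully recording the scaling constants; all the hard work was done in Theorem~\ref{thm:entropic} and the abstract results of Section~\ref{sec:frf}.
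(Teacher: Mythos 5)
Your proposal is correct and follows precisely the assembly route the paper indicates in the sentence preceding the corollary: invoke Theorem~\ref{thm:entropic} (resp.\ Corollary~\ref{col:2d_hold}) to get $\gamma_{\zz,\eta}\in(0,\infty]$, feed this into Theorem~\ref{thm:1dfacesmain}(i), use Remark~\ref{rem:kappa} to ensure $\kappa_{\zz,t}$ is monotone nondecreasing in $t$ so that Lemma~\ref{lem:facialresidualsbeta} applies, and then deduce the simplified form $\hat\psi$ via the scaling inequalities in \eqref{d:entropy_p}. Nothing to add; the paper gives only a one-line pointer to these ingredients, and your write-up fills in exactly those details.
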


\subsubsection{$\Fhat_\beta$: the family of one-dimensional faces $\beta \in \RR$}%\label{s:beta_faces}

Recall from Proposition~\ref{d:FcapbdKexp} that for each $\beta \in \RR$,
\begin{align*}
	\Fhat_\beta :=
	\left\{\left(-\beta y+y,y,e^{1-\beta}y \right)\;\bigg|\;y \in [0,\infty )\right\}
\end{align*}
is a one-dimensional face of $K_{\exp}$. We will now show that for $\Fhat_\beta$, $\beta\in \RR$, the $\gamma_{\zz,\eta}$ defined in Theorem~\ref{thm:1dfacesmain} is positive when $\frakg = |\cdot|^\frac12$.
Our discussion will be centered around the following quantities, which were also defined and used in the proof of Theorem~\ref{thm:1dfacesmain}. Specifically, for $\zz \in \expCone^*$ such that $\Fhat_{\beta} = \expCone\cap \{\zz\}^\perp$, we let $\vv\in \bd K_{\exp}\cap B(\eta)\backslash \Fhat_\beta$ and define
\begin{align} \label{d:w}
%v:=&P_{K_{\exp} }(q) \label{d:v} \\
\ww:=P_{\{\zz \}^\perp }\vv\ \ \ {\rm and}\ \ \
\uu:=P_{\Fhat_\beta}\ww.%\begin{cases}\Fhat_\beta \cap \{(x,y,z) \;|\;z=w_z  \} & \text{if}\;w_z \geq 0  ;\\
%0 & \text{otherwise}
%\end{cases}\label{d:u}
\end{align}

We first note the following three important vectors:
\begin{equation}\label{veczfp}
\hzz := \begin{bmatrix}
1\\ \beta\\ -e^{\beta-1}
\end{bmatrix},\ \ \ \hff = \begin{bmatrix}
1-\beta \\ 1\\ e^{1-\beta}
\end{bmatrix},\ \ \ \hpp = \begin{bmatrix}
\beta e^{1-\beta} + e^{\beta-1}\\
-e^{1-\beta} - (1-\beta)e^{\beta-1}\\
\beta^2-\beta+1
\end{bmatrix}.
\end{equation}
Note that $\hzz$ is parallel to $\zz$ in \eqref{d:pz} (recall that $z_x < 0$ for $\Fhat_\beta$, where $\beta := \frac{z_y}{z_x}\in \RR$), $\Fhat_\beta$ is the conic hull of $\{\hff\}$ according to Proposition~\ref{d:FcapbdKexp}, $\langle\hzz,\hff\rangle=0$ and $\hpp = \hzz\times\hff\neq {\bf 0}$. These three {\em nonzero} vectors form a mutually orthogonal set. The next lemma represents $\|\uu - \ww\|$ and $\|\ww - \vv\|$ in terms of inner products of $\vv$ with these vectors, whenever possible.

\begin{lemma}\label{lem:distance}
	Let $\beta\in \RR$ and $\zz\in K_{\exp}^*$ with $z_x<0$ such that $\Fhat_\beta = \{\zz \}^\perp \cap K_{\exp}$ is a one-dimensional face of $K_{\exp}$. Let $\eta >0$, $\vv\in \bd K_{\exp}\cap B(\eta)\backslash \Fhat_\beta$ and define $\ww$ and $\uu$ as in \eqref{d:w}. Let $\{\hzz,\hff,\hpp\}$ be as in \eqref{veczfp}. Then
	\[
	\|\ww - \vv\| = \frac{|\langle \hzz,\vv\rangle|}{\|\hzz\|}\ \ \ {\rm and}\ \ \
	\|\ww - \uu\| =\begin{cases}
	\frac{|\langle \hpp,\vv\rangle|}{\|\hpp\|} & {\rm if}\ \langle\hff,\vv\rangle\ge 0,\\
	\|\ww\| & {\rm otherwise}.
	\end{cases}
	\]
	Moreover, when $\langle\hff,\vv\rangle\ge 0$, we have $\uu = P_{{\rm span}\Fhat_\beta}\ww$.
\end{lemma}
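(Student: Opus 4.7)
The plan is to exploit the fact that $\{\hzz,\hff,\hpp\}$ form an orthogonal basis of $\RR^3$. This is immediate by direct computation: $\langle \hzz, \hff\rangle = (1)(1-\beta)+\beta - e^{\beta-1}e^{1-\beta} = 1-\beta+\beta-1 = 0$, and $\hpp = \hzz\times \hff$ by construction, so $\hpp\perp \hzz$ and $\hpp\perp \hff$. All three vectors are nonzero (e.g., $\hff$ has a strictly positive third component), so they span $\RR^3$. The vector $\hzz$ is a positive scalar multiple of $\zz$, so $\{\zz\}^\perp = \{\hzz\}^\perp = \lspan\{\hff,\hpp\}$, and $\Fhat_\beta = \cone\{\hff\}$ by Proposition~\ref{d:FcapbdKexp}.

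For the first formula, the projection onto the hyperplane $\{\zz\}^\perp = \{\hzz\}^\perp$ satisfies
\[
\ww - \vv = -\frac{\langle \hzz,\vv\rangle}{\|\hzz\|^2}\hzz,
\]
so $\|\ww-\vv\| = |\langle \hzz,\vv\rangle|/\|\hzz\|$. Moreover, because $\hff\perp \hzz$ and $\hpp\perp\hzz$, I get the useful identities $\langle \hff,\ww\rangle = \langle \hff,\vv\rangle$ and $\langle \hpp,\ww\rangle = \langle \hpp,\vv\rangle$; these will let me phrase the final answers in terms of $\vv$.

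For the second formula, I project $\ww$ onto the ray $\Fhat_\beta = \{t\hff : t\ge 0\}$. Writing $\ww$ in the orthogonal basis $\{\hff,\hpp\}$ of $\{\zz\}^\perp$, namely
\[
\ww = \frac{\langle \hff,\ww\rangle}{\|\hff\|^2}\hff + \frac{\langle \hpp,\ww\rangle}{\|\hpp\|^2}\hpp,
\]
I split into two cases depending on the sign of $\langle \hff,\ww\rangle = \langle \hff,\vv\rangle$. If $\langle \hff,\vv\rangle\ge 0$, then $\frac{\langle \hff,\ww\rangle}{\|\hff\|^2}\hff\in \Fhat_\beta$, so this is the projection of $\ww$ onto $\lspan\Fhat_\beta$ and simultaneously onto $\Fhat_\beta$. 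Hence $\uu = P_{\lspan\Fhat_\beta}\ww$ (proving the final assertion), and by orthogonality
\[
\|\ww - \uu\| = \frac{|\langle \hpp,\ww\rangle|}{\|\hpp\|} = \frac{|\langle \hpp,\vv\rangle|}{\|\hpp\|}.
\]
If instead $\langle \hff,\vv\rangle < 0$, every nonzero element of $\Fhat_\beta$ lies strictly on the other side of $\{\hff\}^\perp$ from $\ww$, so $\uu = \mathbf{0}$ and $\|\ww-\uu\| = \|\ww\|$.

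No step presents a real obstacle; the only thing to be careful about is checking that the projection of $\ww$ onto $\Fhat_\beta$ (as opposed to $\lspan\Fhat_\beta$) really equals $\mathbf{0}$ in the negative case, which follows from standard convex analysis applied to the closed convex cone generated by a single vector.
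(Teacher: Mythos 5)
Your proof is correct and uses essentially the same orthogonal-decomposition argument as the paper: projecting $\vv$ onto $\{\hzz\}^\perp$ to get $\ww - \vv$ along $\hzz$, then decomposing $\ww$ in the basis $\{\hff,\hpp\}$ and splitting on the sign of $\langle\hff,\ww\rangle=\langle\hff,\vv\rangle$. One trivial slip: since $z_x < 0$ and $\zz = z_x\hzz$, the vector $\hzz$ is a \emph{negative} (not positive) scalar multiple of $\zz$, but this is immaterial since you only use it to conclude $\{\zz\}^\perp = \{\hzz\}^\perp$.
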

\begin{proof}
	Since $\{\hzz,\hff,\hpp\}$ is orthogonal, one can decompose $\vv$ as
	\begin{equation}\label{eq:vdecompose}
	\vv = \lambda_1 \hzz + \lambda_2 \hff + \lambda_3 \hpp,
	\end{equation}
	with
	\begin{equation}\label{eq:lambda}
	\lambda_1 = \langle \hzz,\vv\rangle/\|\hzz\|^2, \ \  \lambda_2 = \langle \hff,\vv\rangle/\|\hff\|^2\ \ {\rm and}\ \ \lambda_3 = \langle \hpp,\vv\rangle/\|\hpp\|^2.
	\end{equation}
	Also, since $\hzz$ is parallel to $\zz$, we must have $\ww = \lambda_2 \hff + \lambda_3 \hpp$. Thus, it holds that $\|\ww-\vv\| = |\lambda_1|\|\hzz\|$ and the first conclusion follows from this and \eqref{eq:lambda}.
	
	Next, we have $\uu = \hat t \,\hff$, where
	\[
	\hat t= \argmin_{t\ge 0}\|\ww - t\hff\| = \begin{cases}
	\frac{\langle \ww,\hff\rangle}{\|\hff\|^2} & {\rm if}\ \langle \hff,\ww\rangle\ge 0,\\
	0 & {\rm otherwise}.
	\end{cases}
	\]
	Moreover, observe from \eqref{eq:vdecompose} that $\langle \hff,\ww\rangle = \langle \hff,\vv - \lambda_1\hzz\rangle = \langle \hff,\vv\rangle$. These mean that when $\langle \hff,\vv\rangle< 0$, we have $\uu = 0$, while when $\langle \hff,\vv\rangle\ge 0$, we have $\uu = \frac{\langle \ww,\hff\rangle}{\|\hff\|^2}\hff=P_{{\rm span}\Fhat_\beta}\ww$ and
	\[
	\|\ww - \uu\| = \left\|\ww - \frac{\langle \ww,\hff\rangle}{\|\hff\|^2}\hff\right\| = |\lambda_3|\|\hpp\| = |\langle\hpp,\vv\rangle|/\|\hpp\|,
	\]
	where the second and the third equalities follow from \eqref{eq:vdecompose}, \eqref{eq:lambda}, and the fact that $\ww = \lambda_2 \hff + \lambda_3 \hpp$.
	This completes the proof.
\end{proof}

We now prove our main theorem in this section.
\begin{theorem}[H\"{o}lderian error bound concerning $\Fhat_\beta$, $\beta\in \RR$]\label{thm:nonzerogamma}
	Let $\beta\in \RR$ and $\zz\in K_{\exp}^*$ with $z_x<0$ such that $\Fhat_\beta = \{\zz \}^\perp \cap K_{\exp}$ is a one-dimensional face of $K_{\exp}$.
	Let $\eta > 0$ and let $\gamma_{\zz,\eta}$ be defined as in \eqref{gammabetaeta} with $\frakg = |\cdot|^\frac12$. Then $\gamma_{\zz,\eta} \in (0,\infty]$ and
	\[
	\dist(\qq,\Fhat_\beta)\le \max\{2\sqrt{\eta},2\gamma_{\zz,\eta}^{-1}\}\cdot\dist(\qq,\expCone)^\frac12\ \ \ \mbox{whenever $\qq\in \{\zz\}^\perp\cap B(\eta)$.}
	\]
\end{theorem}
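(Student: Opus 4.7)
My plan is to argue by contradiction using Lemma~\ref{lem:infratio}. Suppose $\gamma_{\zz,\eta} = 0$. Then there exist $\bv\in \Fhat_\beta$ and a sequence $\{\vv^k\}\subset \bd\expCone\cap B(\eta)\setminus \Fhat_\beta$ with $\vv^k\to \bv$, $\ww^k\to \bv$, and $|\langle\hzz,\vv^k\rangle|^{1/2}/(\|\hzz\|^{1/2}\|\ww^k-\uu^k\|)\to 0$, where I use Lemma~\ref{lem:distance} to identify $\|\ww^k-\vv^k\|$ with $|\langle\hzz,\vv^k\rangle|/\|\hzz\|$. The key reduction is to parametrize points of $\bd\expCone$ with $v_y>0$ adapted to the face: writing $v_x/v_y = (1-\beta)+t$, an elementary calculation using $\langle\hzz,\hff\rangle = 0$ yields
\[
\langle\hzz,\vv\rangle \;=\; -v_y\bigl(e^{t}-t-1\bigr),\qquad \langle\hpp,\vv\rangle \;=\; v_y\,f(t),
\]
where $f(0)=0$ and $f'(0) = e^{1-\beta}(\beta^{2}+1)+e^{\beta-1}>0$ (verification of $f(0)=0$ uses orthogonality $\hpp\perp \Fhat_\beta$). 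Moreover, $\langle\hff,\vv\rangle > 0$ in a neighborhood of $\Fhat_\beta$, so Lemma~\ref{lem:distance} gives $\|\ww^k-\uu^k\| = |\langle\hpp,\vv^k\rangle|/\|\hpp\|$ there.

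The non-degenerate case is $\bv = \bar y\,\hff$ with $\bar y > 0$. Then eventually $v_y^k>0$ and the parameter $t^k\to 0$ while $v_y^k\to \bar y$. Taylor expanding the two explicit expressions above gives $|\langle\hzz,\vv^k\rangle|\sim v_y^k(t^k)^2/2$ and $|\langle\hpp,\vv^k\rangle|\sim v_y^k\, f'(0)\,|t^k|$, whence
\[
\frac{|\langle\hzz,\vv^k\rangle|^{1/2}}{|\langle\hpp,\vv^k\rangle|} \;\longrightarrow\; \frac{1}{f'(0)\sqrt{2\bar y}} \;>\; 0,
\]
contradicting the hypothesis that the ratio tends to $0$.

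The remaining case is $\bv = \mathbf 0$. I split it further. If $v_y^k>0$ with $v_x^k/v_y^k\to 1-\beta$, the same Taylor computation applies and produces the \emph{stronger} estimate $1/(f'(0)\sqrt{2v_y^k})\to +\infty$, again contradictory. In every other situation (namely, $v_y^k = 0$ along a subsequence, or $v_y^k>0$ with $v_x^k/v_y^k$ bounded away from $1-\beta$ or diverging to $\pm\infty$), I exploit the conic homogeneity of everything in sight: rescale by $s^k := \|\vv^k\|\to 0$ and extract a subsequential limit $\tilde\vv\in\bd\expCone$ with $\|\tilde\vv\|=1$. A case-by-case check using the parametrization of $\bd\expCone$ verifies that, in each of these sub-cases, $\tilde\vv\notin \Fhat_\beta$; combined with $\Fhat_\beta = \{\zz\}^\perp\cap \expCone$ and $\tilde\vv\in\expCone$, this gives $\tilde\vv\notin\{\zz\}^\perp$, so $\|\tilde\ww-\tilde\vv\|>0$. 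Since the rescaled $\|\tilde\ww^k-\tilde\uu^k\|$ stays bounded, the rescaled ratio has positive liminf, and by homogeneity the original ratio equals $(s^k)^{-1/2}$ times the rescaled ratio and therefore diverges — again contradicting the assumption. Hence $\gamma_{\zz,\eta}\in(0,\infty]$, and the displayed error bound is immediate from Theorem~\ref{thm:1dfacesmain} together with Remark~\ref{rem:kappa}.

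The main obstacle is the degenerate case $\bv = \mathbf 0$: scaling alone is insufficient precisely when $v_x^k/v_y^k\to 1-\beta$ (since then the rescaled limit $\tilde\vv$ lies in $\Fhat_\beta$ and the rescaled ratio becomes indeterminate). The resolution is to keep the direct Taylor expansion available as a fallback for exactly that configuration, so the two techniques — direct asymptotic analysis when approaching the face tangentially, and scaling when approaching transversally — together exhaust all subsequential behaviours.
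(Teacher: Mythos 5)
Your proof is correct, and it diverges from the paper's in how it handles the ``non-tangential'' behaviour. The common ground is the Taylor expansion near $\Fhat_\beta$: you parametrize $v_x/v_y = (1-\beta)+t$, and your formulas $\langle\hzz,\vv\rangle = -v_y(e^t-t-1)$ and $\langle\hpp,\vv\rangle = v_y f(t)$ with $f(0)=0$, $f'(0)=e^{\beta-1}+(\beta^2+1)e^{1-\beta}>0$ agree (after tracking the norms $\|\hzz\|,\|\hpp\|$) with the paper's functions $h_1,h_2$ and the limit in~\eqref{taylor_limit}. Where you depart from the paper is in the remaining cases. The paper splits on the sign of $\langle\hff,\vv^k\rangle$ and, for $v_x^k/v_y^k$ away from $1-\beta$ or for $\langle\hff,\vv^k\rangle<0$, constructs explicit auxiliary functions $H$ and $G$ and bounds their infima below by analysing their limits as $\zeta\to\pm\infty$; it also treats $\vv^k\in\Fhat_{-\infty}$ by a separate direct estimate. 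You instead observe that those remaining configurations all force $\bv=\mathbf 0$, and you exploit the conic homogeneity: normalize by $s^k=\|\vv^k\|$, pass to a subsequential limit $\tilde\vv\in\bd\expCone$ of unit norm, show $\tilde\vv\notin\Fhat_\beta$ (hence $\tilde\vv\notin\{\zz\}^\perp$, so $\|\tilde\ww-\tilde\vv\|>0$), and conclude that the rescaled ratio has positive liminf while $(s^k)^{-1/2}\to\infty$, giving divergence of the original ratio. This compactness argument is cleaner and more conceptual — it avoids the explicit bookkeeping of $H$ and $G$, and it absorbs the $\Fhat_{-\infty}$ case automatically — at the cost of being nonconstructive (the paper's route yields, at least in principle, an explicit positive lower bound on $\gamma_{\zz,\eta}$). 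Two small points you should tighten in a write-up: the assertion that $\langle\hff,\vv\rangle>0$ ``in a neighborhood of $\Fhat_\beta$'' is false near the origin, so state it for the specific tangential subsequences where it actually holds, namely when $v_y^k$ stays bounded away from $0$ or when $v_x^k/v_y^k\to 1-\beta$ with $v_y^k>0$; and the ``case-by-case check'' that $\tilde\vv\notin\Fhat_\beta$ deserves to be spelled out, especially the case $v_x^k/v_y^k\to+\infty$ where one must verify $\tilde\vv=(0,0,1)$ so that $\tilde\vv\in\bd\expCone$ genuinely, since $v_x^k>0$ there and a careless reading might suggest the limit escapes the cone.
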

\begin{proof}
	In view of Lemma~\ref{lem:infratio}, take any
	$\bv \in \Fhat_\beta$ and a sequence $\{\vv^k\}\subset \bd K_{\exp}\cap B(\eta) \backslash \Fhat_\beta$ such that
	\begin{equation}\label{infratiohk_contradiction}
	\underset{k \rightarrow \infty}{\lim}\vv^k = \underset{k \rightarrow \infty}{\lim}\ww^k =\bv,%\ \ \ {\rm and}\ \ \ \lim_{k\rightarrow\infty} \frac{\|\ww^k - \vv^k\|^\frac12}{\|\ww^k- \uu^k\|}	= 0,
	\end{equation}
	where $\ww^k = P_{\{\zz\}^\perp}\vv^k$, $\uu^k = P_{\Fhat_\beta}\ww^k$, and $\ww^k\neq \uu^k$. We will show that \eqref{infratiohkb} does not hold for $\frakg = |\cdot|^\frac12$.
	
	We first suppose that $\vv^k\in \Fhat_{-\infty}$ infinitely often. By extracting a subsequence if necessary, we may assume that $\vv^k\in \Fhat_{-\infty}$ for all $k$. From the definition of $\Fhat_{-\infty}$ in \eqref{Fneginfinity}, we have $v_x^k\le 0$, $v_y^k=0$ and $v_z^k \ge 0$. Thus, recalling the definition of $\hzz$ in \eqref{veczfp}, it holds that
	\begin{equation}\label{easycase1}
	\begin{aligned}
	|\langle\hzz,\vv^k\rangle| &= |v^k_x - v_z^ke^{\beta-1}| = -v^k_x + v_z^ke^{\beta-1} = |v^k_x| + e^{\beta-1}|v_z^k| \\
	&\ge \min\{1,e^{\beta-1}\}\sqrt{|v^k_x|^2 + |v_z^k|^2} = \min\{1,e^{\beta-1}\}\|\vv^k\|,
	\end{aligned}
	\end{equation}
	where the last equality holds because $v_y^k=0$. Next, using properties of projections onto subspaces, we have $\|\ww^k\|  \le \|\vv^k\|$.
	This together with Lemma~\ref{lem:distance} and \eqref{easycase1} shows that
	\[
	\|\ww^k - \uu^k\| = \dist(\ww^k,\Fhat_\beta)\le \|\ww^k\| \le \|\vv^k\|\le \frac{1}{\min\{1,e^{\beta-1}\}}|\langle\hzz,\vv^k\rangle| = \frac{\|\hzz\|}{\min\{1,e^{\beta-1}\}}\|\ww^k - \vv^k\|.
	\]
	Since $\ww^k \to \bv$ and $\vv^k \to \bv$, the above display shows that \eqref{infratiohkb} does not hold for $\frakg = |\cdot|^\frac12$ in this case.
	
	Next, suppose that $\vv^k\notin \Fhat_{-\infty}$ for all large $k$ instead. By passing to a subsequence, we assume that $\vv^k\notin \Fhat_{-\infty}$ for all $k$. In view of \eqref{d:Kexp} and \eqref{Fneginfinity}, this means in particular that
	\begin{equation}\label{formulavstar}
	\vv^k = (v^k_x,v^k_y,v^k_ye^{v^k_x/v^k_y})\ \ {\rm and}\ \ v^k_y>0\ \mbox{ for all}\ k.
	\end{equation}
	
	We consider two cases  and show that \eqref{infratiohkb} does not hold for $\frakg = |\cdot|^\frac12$ in either of them:
	\begin{enumerate}[(I)]
		\item\label{ebsubcasev0} $\langle \hff,\vv^k\rangle\ge 0$ infinitely often;
		\item\label{ebsubcasevneq0} $\langle \hff,\vv^k\rangle< 0$ for all large $k$.
	\end{enumerate}
	
	\ref{ebsubcasev0}: Since $\langle \hff,\vv^k\rangle\ge 0$ infinitely often, by extracting a subsequence if necessary, we assume that $\langle \hff,\vv^k\rangle\ge 0$ for all $k$. Now, consider the following functions:
	\[
	\begin{aligned}
	h_1(\zeta)&:= \zeta + \beta - e^{\beta+\zeta-1},\\
	h_2(\zeta)&:= (\beta e^{1-\beta} + e^{\beta-1})\zeta - e^{1-\beta}-(1-\beta)e^{\beta-1} + (\beta^2-\beta+1)e^\zeta.
	\end{aligned}
	\]
	Using these functions, Lemma~\ref{lem:distance}, \eqref{veczfp} and \eqref{formulavstar}, one can see immediately that
	\begin{equation}\label{hahahehe1}
	\|\ww^k - \vv^k\| = \frac{|\langle\hzz,\vv^k\rangle|}{\|\hzz\|} = \frac{v^k_y |h_1(v^k_x/v^k_y)|}{\|\hzz\|}\ \ {\rm and}\ \
	\|\ww^k - \uu^k\| = \frac{|\langle\hpp,\vv^k\rangle|}{\|\hpp\|} = \frac{v^k_y |h_2(v^k_x/v^k_y)|}{\|\hpp\|}.
	\end{equation}
	%Recalling that $\ln(a) = a-1$ if and only if $a = 1$,
	Note that $h_1$ is zero {\bf if and only if} $\zeta = 1 - \beta$.
	Furthermore, we have $h_1'(1-\beta) = 0$ and $h_1''(1-\beta) = -1$.
	Then, considering the Taylor expansion of $h_1$ around $1-\beta$ we have
	\begin{equation*}
	h_1(\zeta) = 1 + (\zeta + \beta-1) - e^{\beta+\zeta-1} = -\frac{(\zeta+\beta-1)^2}{2} + O(|\zeta+\beta-1|^3)\ \ \ {\rm as}\ \ \zeta\to 1-\beta.
	\end{equation*}
	Also, one can check that $h_2(1-\beta)=0$ and that
	\[
	h_2'(1-\beta) = \beta e^{1-\beta} + e^{\beta-1} + (\beta^2-\beta+1)e^{1-\beta} = e^{\beta-1} + (\beta^2+1)e^{1-\beta} > 0.
	\]
	Therefore, we have the following Taylor expansion of $h_2$ around $1-\beta$:
	\begin{equation}\label{newlyadded}
	h_2(\zeta) = (e^{\beta-1} + (\beta^2+1)e^{1-\beta})(\zeta + \beta-1) + O(|\zeta+\beta-1|^2)\ \ \ {\rm as}\ \ \zeta\to 1-\beta.
	\end{equation}
	Thus, using the Taylor expansions of $h_1$ and $h_2$ at $1-\beta$
	we have
	\begin{equation}\label{taylor_limit}
	\lim\limits_{\zeta\to 1-\beta}\frac{|h_1(\zeta)|^\frac12}{|h_2(\zeta)|} = \frac1{\sqrt{2}(e^{\beta-1} + (\beta^2+1)e^{1-\beta})}> 0.
	\end{equation}
	Hence, there exist $C_h > 0$ and $\epsilon>0$ so that
	\begin{equation}\label{maybeusefulrel}
	|h_1(\zeta)|^\frac12 \ge C_h|h_2(\zeta)| \ \ {\rm whenever}\ |\zeta - (1-\beta)| \le \epsilon.
	\end{equation}
	Next, consider the following function\footnote{Notice that this function is well defined because $h_1$ is zero only at $1-\beta$ and thus we will not end up with $\frac00$.}
	\[
	H(\zeta):=\begin{cases}
	\frac{|h_1(\zeta)|}{|h_2(\zeta)|} & {\rm if}\ |\zeta -(1-\beta)|\ge \epsilon, h_2(\zeta)\neq 0,\\
	\infty & {\rm otherwise}.
	\end{cases}
	\]
	Then it is easy to check that $H$ is proper closed and is never zero.
	Moreover, by direct computation, we have $\lim\limits_{\zeta\to \infty}H(\zeta) = \frac{e^{\beta-1}}{\beta^2-\beta+1} > 0$ and
	\[
	\lim\limits_{\zeta\to -\infty}H(\zeta) = \begin{cases}
	|\beta e^{1-\beta} + e^{\beta-1}|^{-1} & {\rm if}\ \beta e^{1-\beta} + e^{\beta-1}\neq 0,\\
	\infty & {\rm otherwise}.
	\end{cases}
	\]
	Thus, we deduce that $\inf H > 0$.
	
	Now, if it happens that $|v^k_x/v^k_y - (1-\beta)|> \epsilon$ for all large $k$, upon letting $\zeta_k:= v^k_x/v^k_y$, we have from \eqref{hahahehe1} that for all large $k$,
	\begin{equation}\label{finalcase2}
	\begin{aligned}
	\frac{\|\ww^k-\vv^k\|}{\|\ww^k-\uu^k\|} &=\frac{\|\hpp\|}{\|\hzz\|}\frac{|h_1(\zeta_k)|}{|h_2(\zeta_k)|} = \frac{\|\hpp\|}{\|\hzz\|}H(\zeta_k)\ge \frac{\|\hpp\|}{\|\hzz\|}\inf H >0,
	\end{aligned}
	\end{equation}
	where the second equality holds because of the definition of $H$ and the facts that $\ww^k\neq \uu^k$ (so that $h_2(\zeta_k)\neq 0$ by \eqref{hahahehe1}) and $|v^k_x/v^k_y - (1-\beta)|> \epsilon$ for all large $k$.
	
	On the other hand, if it holds that $|v^k_x/v^k_y - (1-\beta)|\le \epsilon$ infinitely often, then by extracting a further subsequence, we may assume that $|v^k_x/v^k_y - (1-\beta)|\le \epsilon$ for all $k$. Upon letting $\zeta_k:= v^k_x/v^k_y$, we have from \eqref{hahahehe1} that
	\begin{equation}\label{finalcase1}
	\begin{aligned}
	\frac{\|\ww^k-\uu^k\|}{\|\ww^k-\vv^k\|^\frac12} =\frac{\sqrt{v^k_y\|\hzz\|}}{\|\hpp\|}\frac{|h_2(\zeta_k)|}{|h_1(\zeta_k)|^\frac12} \le \frac{\sqrt{v^k_y\|\hzz\|}}{C_h\|\hpp\|} \le \frac{\sqrt{\eta\|\hzz\|}}{C_h\|\hpp\|},
	\end{aligned}
	\end{equation}
	where the first inequality holds thanks to $|v^k_x/v^k_y - (1-\beta)|\le \epsilon$ for all $k$, \eqref{maybeusefulrel} and the fact that $\ww^k\neq {\uu}^k$ (so that $h_2(\zeta_k)\neq 0$ and hence $h_1(\zeta_k)\neq 0$), and the second inequality holds because $\vv^k\in B(\eta)$.
	
	Using \eqref{finalcase2} and \eqref{finalcase1} together with \eqref{infratiohk_contradiction}, we see that \eqref{infratiohkb} does not hold for $\frakg = |\cdot|^\frac12$. This concludes case~\ref{ebsubcasev0}.
	
	\ref{ebsubcasevneq0}: By passing to a subsequence, we may assume that $\langle \hff,\vv^k\rangle< 0$ for all $k$. Then we see from \eqref{veczfp} and \eqref{formulavstar} that
	\[
	(1-\beta)\frac{v_x^k}{v_y^k} + 1 + e^{1-\beta}e^{v_x^k/v_y^k} = \frac1{v_y^k} \langle \hff,\vv^k\rangle< 0.
	\]
	Using this together with the fact that $(1-\beta)^2+1+e^{2(1-\beta)} > 0$, we deduce that
	there exists $\epsilon> 0$ so that
	\begin{equation}\label{eq:keyrelation}
	\left|\frac{v_x^k}{v_y^k}-(1-\beta)\right|\ge \epsilon\ \ \mbox{for all}\ k.
	\end{equation}
	Now, consider the following function
	\[
	G(\zeta):= \frac{|\zeta+\beta-e^{\beta+\zeta-1}|}{\sqrt{\zeta^2+1+e^{2\zeta}}}.
	\]
	Then $G$ is continuous and is zero {\bf if and only if} $\zeta = 1-\beta$. Moreover, by direct computation, we have $\lim\limits_{\zeta\to\infty} G(\zeta) = e^{\beta-1} > 0$ and $\lim\limits_{\zeta\to-\infty} G(\zeta) = 1 > 0$. Thus, it follows that
	\begin{equation}\label{infG}
	\underline{G}:= \inf_{|\zeta +\beta-1|\ge \epsilon}G(\zeta) > 0.
	\end{equation}
	Finally, since $\langle \hff,\vv^k\rangle< 0$ for all $k$, we see that
	\[
	\begin{aligned}
	\frac{\|\ww^k-\vv^k\|}{\|\ww^k-\uu^k\|}&\overset{\rm (a)}\ge \frac{\|\ww^k-\vv^k\|}{\|\vv^k\|} \overset{\rm (b)}= \frac{|{\widehat{z}}_xv^k_x + {\widehat{z}}_yv^k_y + {\widehat{z}}_zv^k_ye^{v^k_x/v^k_y}|}{\|\hzz\|\sqrt{(v^k_x)^2+(v^k_y)^2+(v^k_y)^2e^{2v^k_x/v^k_y}}}\\
	&\overset{\rm (c)}=\frac{|{\widehat{z}}_x\zeta_k + {\widehat{z}}_y + {\widehat{z}}_ze^{\zeta_k}|}{\|\hzz\|\sqrt{(\zeta_k)^2 + 1 + e^{2\zeta_k}}} \overset{\rm (d)}= \frac{1}{\|\hzz\|}G(\zeta_k)\overset{\rm (e)}\ge \frac{1}{\|\hzz\|}\underline{G} > 0,
	\end{aligned}
	\]
	where (a) follows from $\|\ww^k-\uu^k\| = \|\ww^k\|$ (see Lemma~\ref{lem:distance}) and $\|\ww^k\|\leq \norm{\vv^k}$ (because $\ww^k$ is the projection of ${\vv}^k$ onto a subspace), (b) follows from Lemma~\ref{lem:distance} and \eqref{formulavstar}, (c) holds because $v^k_y > 0$ (see \eqref{formulavstar}) and we defined $\zeta_k:= v_x^k/v_y^k$, (d) follows from \eqref{veczfp} and the definition of $G$, and (e) follows from \eqref{eq:keyrelation} and \eqref{infG}. The above together with \eqref{infratiohk_contradiction} shows that \eqref{infratiohkb} does not hold for $\frakg = |\cdot|^\frac12$, which is what we wanted to show in case~\ref{ebsubcasevneq0}.
	
	Summarizing the above cases, we conclude that there does not exist the sequence $\{\vv^k\}$ and its associates so that \eqref{infratiohkb} holds for $\frakg = |\cdot|^\frac12$. By Lemma~\ref{lem:infratio}, it must then hold that $\gamma_{\zz,\eta}\in (0,\infty]$ and we have the desired error bound in view of Theorem~\ref{thm:1dfacesmain}. This completes the proof.
\end{proof}

Combining Theorem~\ref{thm:nonzerogamma}, Theorem~\ref{thm:1dfacesmain} and Lemma~\ref{lem:facialresidualsbeta}, and using the observation that $\gamma_{\zz,0}=\infty$ (see \eqref{gammabetaeta}), we obtain a {\oneFRF} in the following corollary.
\begin{corollary}[{\oneFRFs} concerning $\Fhat_\beta$, $\beta\in \RR$]\label{col:frf_fb}
	Let $\beta\in \RR$ and $\zz\in K_{\exp}^*$ with $z_x<0$ such that $\Fhat_\beta = \{\zz \}^\perp \cap K_{\exp}$ is a one-dimensional face of $K_{\exp}$.
	Let $\kappa_{\zz,t}$ be defined as in \eqref{haha2} with $\frakg = |\cdot|^\frac12$. Then the function $\psi_{\stdCone,\zz}:\RR_+\times \RR_+\to \RR_+$ given by
	\begin{equation*}
	\psi_{\stdCone,\zz}(\epsilon,t) := \max \left\{\epsilon,\epsilon/\|\zz\| \right\} + \kappa_{\zz,t}(\epsilon +\max \left\{\epsilon,\epsilon/\|\zz\| \right\} )^\frac12
	\end{equation*}
	is a {\oneFRFs}  for $K_{\exp}$. In particular,
	there exist $\kappa > 0$ and a nonnegative monotone nondecreasing function $\rho:\RR_+ \to \RR_+$ such that the function $\hat \psi_{\stdCone,\zz}$ given by $\hat \psi_{\stdCone,\zz}(\epsilon,t) \coloneqq \kappa \epsilon + \rho(t)\sqrt{\epsilon}$ is a {\oneFRFs}   for $K_{\exp}$ and $\zz$.
\end{corollary}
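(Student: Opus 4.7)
The plan is to assemble the three ingredients already at hand: Theorem~\ref{thm:nonzerogamma} furnishes the required H\"olderian error bound with exponent $\frac12$ on every ball, Theorem~\ref{thm:1dfacesmain} quantifies the constant $\kappa_{\zz,\eta}$ appearing in that bound, and Lemma~\ref{lem:facialresidualsbeta} converts such a bound into a facial residual function. So there is no genuinely new estimate to produce; the task is just to check that the hypotheses of Lemma~\ref{lem:facialresidualsbeta} are met with $\frakg = |\cdot|^{1/2}$ and then to rewrite the resulting expression in the simplified form.

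\emph{Step 1 (apply Theorem~\ref{thm:nonzerogamma}).} For each $\eta > 0$, Theorem~\ref{thm:nonzerogamma} yields $\gamma_{\zz,\eta}\in(0,\infty]$ together with
\[
\dist(\qq,\Fhat_\beta) \le \kappa_{\zz,\eta}\dist(\qq,K_{\exp})^{1/2} \qquad \forall\,\qq\in\{\zz\}^\perp\cap B(\eta),
\]
where $\kappa_{\zz,\eta}:=\max\{2\sqrt{\eta},2\gamma_{\zz,\eta}^{-1}\}$. By Remark~\ref{rem:kappa}, $\kappa_{\zz,\eta}$ is a finite, nonnegative, monotone nondecreasing function of $\eta$. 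Given an arbitrary $\qq\in\{\zz\}^\perp$, specializing to $\eta=\|\qq\|$ gives
\[
\dist(\qq,\Fhat_\beta) \le \kappa_{\zz,\|\qq\|}\,\frakg(\dist(\qq,K_{\exp})) \qquad \forall\,\qq\in\{\zz\}^\perp,
\]
which is exactly assumption \eqref{assumption:q} of Lemma~\ref{lem:facialresidualsbeta} with $\frakg=|\cdot|^{1/2}$.

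\emph{Step 2 (apply Lemma~\ref{lem:facialresidualsbeta}).} Plugging the data from Step 1 into the formula for $\psi_{\stdCone,\zz}$ provided by Lemma~\ref{lem:facialresidualsbeta} immediately gives
\[
\psi_{\stdCone,\zz}(\epsilon,t) = \max\{\epsilon,\epsilon/\|\zz\|\} + \kappa_{\zz,t}\,\bigl(\epsilon+\max\{\epsilon,\epsilon/\|\zz\|\}\bigr)^{1/2},
\]
and guarantees that this is a facial residual function for $K_{\exp}$ and $\zz$ with respect to $K_{\exp}$, establishing the first conclusion.

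\emph{Step 3 (the ``in particular'' simplification).} Set $C_\zz:=\max\{1,1/\|\zz\|\}$, so that $\max\{\epsilon,\epsilon/\|\zz\|\}=C_\zz\epsilon$ and $\epsilon+\max\{\epsilon,\epsilon/\|\zz\|\}=(1+C_\zz)\epsilon$. Then
\[
\psi_{\stdCone,\zz}(\epsilon,t) = C_\zz\,\epsilon + \kappa_{\zz,t}\sqrt{1+C_\zz}\,\sqrt{\epsilon},
\]
so taking $\kappa:=C_\zz$ and $\rho(t):=\sqrt{1+C_\zz}\,\kappa_{\zz,t}$ yields a facial residual function of the form $\hat\psi_{\stdCone,\zz}(\epsilon,t)=\kappa\epsilon+\rho(t)\sqrt{\epsilon}$; monotonicity of $\rho$ in $t$ is inherited from that of $\kappa_{\zz,t}$. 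There is no real obstacle here: all the analytic work (the delicate Taylor expansions of $h_1,h_2$, the treatment of the different sign regimes of $\langle\hff,\vv\rangle$, and the boundary behaviour as $|\zeta|\to\infty$) was already carried out inside Theorem~\ref{thm:nonzerogamma}; the present corollary is purely a bookkeeping consequence.
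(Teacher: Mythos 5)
Your proof is correct and matches the paper's approach exactly: the paper itself disposes of this corollary in a single sentence by citing precisely the same three ingredients you use (Theorem~\ref{thm:nonzerogamma} for the H\"olderian error bound, Theorem~\ref{thm:1dfacesmain} for the definition and finiteness of $\kappa_{\zz,\eta}$, and Lemma~\ref{lem:facialresidualsbeta} to convert the error bound into a facial residual function). You have filled in the bookkeeping carefully, including the key observation that for an arbitrary $\qq\in\{\zz\}^\perp$ one may specialize to $\eta=\|\qq\|$ to obtain assumption \eqref{assumption:q}, the monotonicity and finiteness of $\kappa_{\zz,\cdot}$ via Remark~\ref{rem:kappa} (which also quietly handles the $\qq=\mathbf{0}$ edge case through the convention $\gamma_{\zz,0}=\infty$ and $1/\infty=0$), and the explicit algebraic simplification in Step 3 with $C_\zz=\max\{1,1/\|\zz\|\}$ that produces $\kappa$ and $\rho$ for the ``in particular'' clause.
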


\subsubsection{$\Fhat_{\infty}$: the exceptional one-dimensional face}%\label{sec:1d_inf}

Recall the special one-dimensional face of $K_{\exp}$ defined by
$$
\stdFace_{\infty}:= \{(x,0,0) \;|\; x\leq 0 \}.
$$
We first show that we have a Lipschitz error bound for any exposing normal vectors $\zz = (0, z_y,z_z)$ with $z_y > 0$ and $z_z > 0$.
\begin{theorem}[Lipschitz error bound concerning $\Fhat_{\infty}$]\label{thm:nonzerogammasec71}
	Let $\zz\in K_{\exp}^*$ with $z_x=0$, $z_y > 0$ and $z_z>0$ so that $\{\zz \}^\perp \cap K_{\exp}=\Fhat_\infty$.
	Let $\eta > 0$ and let $\gamma_{\zz,\eta}$ be defined as in \eqref{gammabetaeta} with $\frakg = |\cdot|$. Then $\gamma_{\zz,\eta} \in (0,\infty]$ and
	\[
	\dist(\qq,\Fhat_{\infty})\le \max\{2,2\gamma_{\zz,\eta}^{-1}\}\cdot\dist(\qq,K_{\exp})\ \ \ \mbox{whenever $\qq\in \{\zz\}^\perp\cap B(\eta)$.}
	\]
\end{theorem}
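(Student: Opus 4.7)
The proof follows the same template as Theorems~\ref{thm:entropic} and~\ref{thm:nonzerogamma}: argue by contradiction via Lemma~\ref{lem:infratio}. Assume $\gamma_{\zz,\eta}=0$ and extract sequences $\{\vv^k\}\subset\bd\expCone\cap B(\eta)\setminus\Fhat_\infty$ and a point $\bv\in\Fhat_\infty$ so that $\vv^k\to\bv$, $\ww^k:=P_{\{\zz\}^\perp}\vv^k\to\bv$, $\ww^k\neq\uu^k:=P_{\Fhat_\infty}\ww^k$, and $\|\ww^k-\vv^k\|/\|\ww^k-\uu^k\|\to 0$. The plan is to contradict this last convergence by showing the ratio is bounded below by a positive constant.

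The geometry here is much more tractable than in the $\Fhat_\beta$ case, because $\Fhat_\infty$ lies on the nonpositive $x$-axis while $\zz=(0,z_y,z_z)$ has zero $x$-component. Consequently $\{\zz\}^\perp$ contains the whole $x$-axis, so $\ww^k$ preserves $v_x^k$ and $P_{\Fhat_\infty}\ww^k=(\min(v_x^k,0),0,0)$. Since every $\vv\in\expCone$ satisfies $v_y,v_z\ge 0$, one gets $\|\ww^k-\vv^k\|=(z_y v_y^k+z_z v_z^k)/\|\zz\|$, and a direct calculation of $\ww^k=\vv^k-\langle\vv^k,\zz\rangle\zz/\|\zz\|^2$ yields $\|\ww^k-\uu^k\|=|z_z v_y^k-z_y v_z^k|/\|\zz\|$ whenever $v_x^k\le 0$, with only an additional $(v_x^k)^2$ term appearing under the square root when $v_x^k>0$.

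I then split into cases based on whether $\vv^k$ sits on the $2$-dimensional face $\Fhat_{-\infty}$. If $\vv^k\in\Fhat_{-\infty}$ for infinitely many $k$, then after passing to a subsequence $v_y^k=0$ and $v_z^k>0$, so the ratio simplifies to the constant $z_z/z_y>0$. Otherwise $v_y^k>0$ and $v_z^k=v_y^k e^{r^k}$ with $r^k:=v_x^k/v_y^k$; the factor $v_y^k$ cancels in the ratio, leaving $f(r^k):=(z_y+z_z e^{r^k})/|z_z-z_y e^{r^k}|$ when $v_x^k\le 0$, and a slightly modified expression with a $(r^k)^2$ contribution in the denominator when $v_x^k>0$ (this sub-case can only arise when $\bv=\mathbf{0}$). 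Computing the limits at $r\to\pm\infty$ (respectively at $r\to 0^+$ and $r\to\infty$) and at the potential singularity $r^*:=\ln(z_z/z_y)$, where the denominator vanishes and the ratio blows up to $+\infty$, shows that each of these functions is continuous and strictly positive with positive infimum, yielding the desired contradiction. The error bound then follows from Theorem~\ref{thm:1dfacesmain}(i) with $\alpha=1$.

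The main technical obstacle, analogous to that in the proof of Theorem~\ref{thm:nonzerogamma}, is the careful verification that $\inf f>0$ together with the corresponding statement for the $v_x^k>0$ sub-case. Unlike the $\Fhat_\beta$ situation, no Taylor expansion is needed here because $\frakg=|\cdot|^1$, so the argument reduces to a routine real-variable check of the asymptotic behaviour of an explicit single-variable continuous positive function.
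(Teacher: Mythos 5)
Your argument is correct, and the case split is the same as the paper's (whether $\vv^k$ lies on $\Fhat_{-\infty}$, and otherwise whether $v_x^k\le 0$ or $v_x^k>0$), but you take a genuinely different route. The paper does \emph{not} invoke Lemma~\ref{lem:infratio} for this face: it normalizes $\zz=(0,a,1)$, forms the orthogonal triple $\tzz,\tff,\tpp$, and then in each case produces a pointwise inequality $\|\ww-\vv\|\ge c\,\|\ww-\uu\|$ using only elementary algebraic bounds (e.g.\ $e^t\ge 1+t$ in the $v_x>0$ case), which yields the explicit lower bound $\gamma_{\zz,\eta}\ge \min\bigl\{\tfrac{\|\tpp\|}{a\|\tzz\|},\tfrac{\min\{1,a\}\|\tpp\|}{\max\{1,a\}\|\tzz\|},\tfrac1{2\|\tzz\|}\bigr\}$. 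You instead wrap the argument in the sequential contradiction framework (as in Theorems~\ref{thm:entropic} and \ref{thm:nonzerogamma}), parametrize by $r=v_x/v_y$, and show that the single-variable function $f(r)=(z_y+z_z e^r)/|z_z-z_y e^r|$ (and its variant with the extra $r^2$ in the denominator when $v_x>0$) has positive infimum via limits at $\pm\infty$ and the blow-up at $r^*=\ln(z_z/z_y)$. Both are sound; the trade-off is that the paper's direct computation hands you a concrete numerical constant, while yours keeps the presentation uniform with the harder neighboring cases. One small observation: because you end up proving a positive lower bound on the ratio uniformly over the relevant domain in every case, the contradiction scaffolding and the passage to convergent subsequences (and the remark that $\bv=\mathbf 0$ when $v_x^k>0$) do no real work here and could simply be dropped, which is in effect what the paper does. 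A minor gap in rigor to address if you wrote this out fully: you assert ``positive infimum'' from the limiting values, but one should check monotonicity or compactness to rule out an interior minimum approaching zero; for $f$ on $r<r^*$ one can compute $f'(r)=e^r(1+(z_y/z_z)^2\cdot\text{const})/(\,\cdot\,)^2>0$ (and symmetrically $f'<0$ for $r>r^*$), so $\inf f=\min\{z_y/z_z,\,z_z/z_y\}$; a similar check is needed for the $v_x>0$ branch, where continuity and the strictly positive limits at $0^+$ and $+\infty$ together with the absence of zeros suffice.
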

\begin{proof}
	Without loss of generality, upon scaling, we may assume that $\zz = (0,a,1)$ for some $a > 0$.
	Similarly as in the proof of Theorem~\ref{thm:nonzerogamma}, we will consider the following vectors:
	\begin{equation*}
	\tzz := \begin{bmatrix}
	0\\a\\1
	\end{bmatrix}, \ \
	\tff := \begin{bmatrix}
	-1\\0\\0
	\end{bmatrix},\ \
	\tpp := \begin{bmatrix}
	0\\1\\-a
	\end{bmatrix}.
	\end{equation*}
	Here, $\Fhat_\infty$ is the conical hull of $\tff$ (see \eqref{Finfinity}), and $\tpp$ is constructed so that $\{\tzz,\tff,\tpp\}$ is orthogonal.
	
	Now, let $\vv\in \bd K_{\exp}\cap B(\eta)\backslash \Fhat_\infty$, $\ww = P_{\{\zz\}^\perp }\vv$ and $\uu=P_{\Fhat_\infty}\ww$ with $\uu\neq \ww$. Then, as in Lemma~\ref{lem:distance}, by decomposing $\vv$ as a linear combination of $\{\tzz,\tff,\tpp\}$, we have
	\begin{equation}\label{haha5}
	\|\ww - \vv\| = \frac{|\langle\tzz,\vv\rangle|}{\|\tzz\|}\ \ {\rm and}\ \ \|\ww-\uu\| = \begin{cases}
	\frac{|\langle\tpp,\vv\rangle|}{\|\tpp\|} & {\rm if}\ \langle\tff,\vv\rangle\ge 0,\\
	\|\ww\| & {\rm otherwise}.
	\end{cases}
	\end{equation}
	We consider the following cases for estimating $\gamma_{\zz,\eta}$.
	\begin{enumerate}[(I)]
		\item\label{inftyebcase1} $\vv\in \Fhat_{-\infty}\backslash \Fhat_\infty$;
		\item\label{inftyebcase2} $\vv\notin \Fhat_{-\infty}$ with $v_x \le 0$;
		\item\label{inftyebcase3} $\vv\notin \Fhat_{-\infty}$ with $v_x > 0$.
	\end{enumerate}
	
	\ref{inftyebcase1}: In this case, $\vv = (v_x,0,v_z)$ with $v_x\le 0\le v_z$; see \eqref{Fneginfinity}. Then $\langle\tff,\vv\rangle = -v_x\ge 0$ and $|\langle\tzz,\vv\rangle| = |v_z| = \frac1a|\langle\tpp,\vv\rangle|$. Consequently, we have from \eqref{haha5} that
	\[
	\|\ww - \vv\| = \frac{|\langle\tzz,\vv\rangle|}{\|\tzz\|} = \frac{|\langle\tpp,\vv\rangle|}{a\|\tzz\|} = \frac{\|\tpp\|}{a\|\tzz\|}\|\ww - \uu\|.
	\]
	
	\ref{inftyebcase2}: In this case, in view of \eqref{d:Kexp} and \eqref{Fneginfinity}, we have $\vv = (v_x,v_y,v_ye^{v_x/v_y})$ with $v_x\le 0$ and $v_y > 0$. Then $\langle\tff,\vv\rangle = -v_x\ge 0$. Moreover, since $v_y>0$, we have
	\[
	\begin{aligned}
	|\langle\tzz,\vv\rangle| &= |av_y+v_ye^{v_x/v_y}| = av_y+v_ye^{v_x/v_y} \ge \min\{1,a\}(v_y+v_ye^{v_x/v_y})\\
	& \ge \frac{\min\{1,a\}}{\max\{1,a\}}(v_y+av_ye^{v_x/v_y})\ge \frac{\min\{1,a\}}{\max\{1,a\}}|v_y-av_ye^{v_x/v_y}| = \frac{\min\{1,a\}}{\max\{1,a\}}|\langle\tpp,\vv\rangle|.
	\end{aligned}
	\]
	Using \eqref{haha5}, we then obtain that $\|\ww - \vv\|\ge \frac{\min\{1,a\}\|\tpp\|}{\max\{1,a\}\|\tzz\|}\|\ww - \uu\|$.
	
	\ref{inftyebcase3}: In this case, in view of \eqref{d:Kexp} and \eqref{Fneginfinity}, $\vv = (v_x,v_y,v_ye^{v_x/v_y})$ with $v_x> 0$ and $v_y > 0$. Then $\langle\tff,\vv\rangle = -v_x< 0$ and hence $\|\ww - \uu\|= \|\ww\|\le \|\vv\|$, where the equality follows from \eqref{haha5} and the inequality holds because $\ww$ is the projection of $\vv$ onto a subspace. Since $v_y > 0$, we have
	\[
	\begin{aligned}
	|\langle\tzz,\vv\rangle| &= |av_y+v_ye^{v_x/v_y}| = av_y+0.5v_ye^{v_x/v_y} + 0.5v_ye^{v_x/v_y}\\
	& \overset{\rm (a)}\ge av_y+0.5v_y(1 + v_x/v_y) + 0.5v_ye^{v_x/v_y} \ge 0.5v_y + 0.5v_x + 0.5v_ye^{v_x/v_y}= 0.5\|\vv\|_1,
	\end{aligned}
	\]
	where we used $v_y > 0$ and $e^t\ge 1+t$ for all $t$ in (a) and $\|\vv\|_1$ denotes the $1$-norm of $\vv$. Combining this with \eqref{haha5} and the fact that $\|\ww\|\le \|\vv\|$, we see that
	\[
	\|\ww - \vv\| = \frac{|\langle\tzz,\vv\rangle|}{\|\tzz\|}\ge\frac{\|\vv\|_1}{2\|\tzz\|}\ge \frac{\|\vv\|}{2\|\tzz\|} \ge \frac{\|\ww\|}{2\|\tzz\|} = \frac{\|\ww-\uu\|}{2\|\tzz\|}.
	\]
	
	Summarizing the three cases, we conclude that $\gamma_{\zz,\eta} \ge \min\left\{\frac{\|\tpp\|}{a\|\tzz\|},\frac{\min\{1,a\}\|\tpp\|}{\max\{1,a\}\|\tzz\|},\frac{1}{2\|\tzz\|}\right\} > 0$. In view of Theorem~\ref{thm:1dfacesmain}, we have the desired error bound. This completes the proof.
\end{proof}

We next turn to the supporting hyperplane defined by $\zz = (0,0,z_z)$ for some $z_z>0$ and so $\{\zz \}^\perp$ is the $xy$-plane. The following lemma demonstrates that the H\"{o}lderian-type error bound in the form of \eqref{haha2} with $\frakg = |\cdot|^\alpha$ for some $\alpha\in (0,1]$ no longer works in this case.

\begin{lemma}[Nonexistence of H\"{o}lderian error bounds]\label{lem:non_hold}
	Let $\zz\in K_{\exp}^*$ with $z_x= z_y = 0$ and $z_z>0$ so that $\{\zz \}^\perp \cap K_{\exp}=\Fhat_\infty$. Let $\alpha\in (0,1]$ and $\eta > 0$. Then
	\[
	\inf_\qq\left\{\frac{\dist(\qq,K_{\exp})^\alpha}{\dist(\qq,\Fhat_\infty)}\;\bigg|\;\qq\in \{\zz\}^\perp\cap B(\eta)\backslash\Fhat_\infty\right\} = 0.
	\]
\end{lemma}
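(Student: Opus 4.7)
The plan is to exhibit an explicit sequence $\{\qq^k\}\subset \{\zz\}^\perp\cap B(\eta)\setminus \Fhat_\infty$ along which the ratio $\dist(\qq^k,K_{\exp})^\alpha/\dist(\qq^k,\Fhat_\infty)$ tends to zero. The guiding intuition is that, in the region $y>0$, the boundary of $K_{\exp}$ has the form $z=ye^{x/y}$, which hugs the $xy$-plane exponentially fast as $x/y\to -\infty$. So points of the $xy$-plane with strongly negative $x/y$ are very close to $K_{\exp}$ but remain at macroscopic distance from $\Fhat_\infty$ because their $y$-coordinate is not zero.

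Concretely, I would set $\delta := \min\{1,\eta/2\}$ and define, for every integer $k \ge 1$,
\[
\qq^k := \left(-\delta,\ \tfrac{\delta}{k},\ 0\right),
\]
which is a vector in $\{\zz\}^\perp$ (since its third coordinate vanishes) and lies in $B(\eta)$ (since $\|\qq^k\|\le \delta\sqrt{1+1/k^2}\le \eta$). Because $q_y^k > 0$, we also have $\qq^k\notin \Fhat_\infty$, so $\qq^k$ is a legal candidate for the infimum.

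Next, since $\Fhat_\infty = \{(x,0,0):x\le 0\}$ and $q_x^k=-\delta<0$, the closest point on $\Fhat_\infty$ to $\qq^k$ is $(-\delta,0,0)$, giving $\dist(\qq^k,\Fhat_\infty) = \delta/k$. For the upper bound on $\dist(\qq^k,K_{\exp})$, I would test the explicit point
\[
\rr^k := \left(-\delta,\ \tfrac{\delta}{k},\ \tfrac{\delta}{k}\,e^{-k}\right),
\]
which lies in $K_{\exp}$ by its very construction, matching the boundary formula $z=ye^{x/y}$ with $x/y=-k$. Hence $\dist(\qq^k,K_{\exp})\le \|\qq^k-\rr^k\| = (\delta/k)\,e^{-k}$.

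Putting these two estimates together yields
\[
\frac{\dist(\qq^k,K_{\exp})^\alpha}{\dist(\qq^k,\Fhat_\infty)} \;\le\; \frac{\left((\delta/k)e^{-k}\right)^\alpha}{\delta/k} \;=\; \delta^{\alpha-1}\,k^{1-\alpha}\,e^{-\alpha k} \;\longrightarrow\; 0 \quad\text{as}\quad k\to\infty,
\]
since the exponential factor dominates the polynomial one for any $\alpha \in (0,1]$. This forces the infimum in the statement to be zero, which finishes the proof. There is no real obstacle here: the only things to watch are picking $\delta$ small enough so that the entire sequence stays inside $B(\eta)$, and observing that the choice $x/y=-k$ (rather than, say, a constant ratio) is what drives the exponential decay of $\dist(\qq^k,K_{\exp})$ that beats every polynomial.
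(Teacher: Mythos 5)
Your proof is correct and takes essentially the same route as the paper: both pick the sequence $\qq^k = (-c, c/k, 0)$ (you use $c=\delta$, the paper uses $c=\eta/2$), bound $\dist(\qq^k,K_{\exp})$ from above by testing the boundary point $(-c, c/k, (c/k)e^{-k})$, compute $\dist(\qq^k,\Fhat_\infty)=c/k$ exactly, and conclude the ratio decays like $k^{1-\alpha}e^{-\alpha k}\to 0$. The extra $\min\{1,\eta/2\}$ in your choice of $\delta$ is harmless but unnecessary, since $(\eta/2)\sqrt{1+1/k^2}\le \eta/\sqrt{2}<\eta$ already.
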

\begin{proof}
	For each $k\in \mathbb{N}$, let $\qq^k := (-\frac\eta2,\frac\eta{2k},0)$. Then $\qq^k\in \{\zz\}^\perp\cap B(\eta)\backslash\Fhat_\infty$ and we have $\dist(\qq^k,\Fhat_\infty) = \frac\eta{2k}$. Moreover, since $(q^k_x,q^k_y,q^k_ye^{q^k_x/q^k_y})\in K_{\exp}$, we have $\dist(\qq^k,K_{\exp})\le q^k_ye^{q^k_x/q^k_y} = \frac\eta{2k}e^{-k}$. Then it holds that
	\[
	\frac{\dist(\qq^k,K_{\exp})^\alpha}{\dist(\qq^k,\Fhat_\infty)} \le \frac{\eta^{\alpha-1}}{2^{\alpha-1}}k^{1-\alpha}e^{-\alpha k} \to 0 \ \ \ {\rm as}\ \ k\to \infty
	\]
	since $\alpha\in (0,1]$. This completes the proof.
\end{proof}

Since a zero-at-zero monotone nondecreasing function of the form $(\cdot)^\alpha$ no longer works, we opt for the following function $\frakg_\infty:\RP\to \RP$ that grows faster around $t=0$:
\begin{equation}\label{def:frakg}
\frakg_\infty(t) :=  \begin{cases}
0 &\text{if}\;t=0,\\
-\frac{1}{\ln(t)} & \text{if}\;0 < t\leq \frac{1}{e^2},\\
\frac{1}{4}+\frac{1}{4}e^2t & \text{if}\;t>\frac{1}{e^2}.
\end{cases}
\end{equation}
Similar to $\frakg_{-\infty}$ in \eqref{d:entropy}, $\frakg_{\infty}$ is monotone increasing and there exists a constant $\widehat L \ge 1$ such that the following inequalities hold for every $t \in \RP$ and $M > 0$:
\begin{equation}\label{def:frakg_p}
|t| \leq \frakg_{\infty}(t), \quad \frakg_{\infty}(2t) \leq \widehat L\frakg_{\infty}(t),\quad \frakg_{\infty}(Mt) \leq {\widehat L}^{1+|\log_2(M)|}\frakg_{\infty}(t).
\end{equation}
We next show that error bounds in the form of \eqref{haha2} holds for $\zz = (0,0,z_z)$, $z_z>0$, if we use $\frakg_\infty$.
\begin{theorem}[Log-type error bound concerning $\Fhat_{\infty}$]\label{thm:nonzerogammasec72}
	Let $\zz\in \expCone^*$ with $z_x=z_y = 0$ and $z_z>0$ so that $\{\zz \}^\perp \cap \expCone=\Fhat_\infty$.
	Let $\eta > 0$ and let $\gamma_{\zz,\eta}$ be defined as in \eqref{gammabetaeta} with $\frakg = \frakg_\infty$ in \eqref{def:frakg}. Then $\gamma_{\zz,\eta} \in (0,\infty]$ and
	\[
	\dist(\qq,\Fhat_{\infty})\le \max\{2,2\gamma_{\zz,\eta}^{-1}\}\cdot\frakg_\infty(\dist(\qq,K_{\exp}))\ \ \ \mbox{whenever $\qq\in \{\zz\}^\perp\cap B(\eta)$.}
	\]
\end{theorem}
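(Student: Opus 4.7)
The plan is to mirror the structure of Theorems~\ref{thm:entropic} and \ref{thm:nonzerogamma}: argue by contradiction via Lemma~\ref{lem:infratio}, then apply Theorem~\ref{thm:1dfacesmain}(i). Note that $\frakg_\infty$ is monotone nondecreasing with $\frakg_\infty(0)=0$ and $\frakg_\infty(t)\ge t$ for all $t\ge 0$ (a short calculation: for $t\in (0,1/e^2]$ one has $1/t+\ln t\ge 1$, while for $t>1/e^2$ one has $1/4+e^2 t/4\ge t$), so Theorem~\ref{thm:1dfacesmain}(i) will apply with $\alpha=1$ once we show $\gamma_{\zz,\eta}>0$.

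Assume $\gamma_{\zz,\eta}=0$ and extract, via Lemma~\ref{lem:infratio}, a sequence $\{\vv^k\}\subset \bd\expCone\cap B(\eta)\setminus \Fhat_\infty$ together with $\ww^k=P_{\{\zz\}^\perp}\vv^k$ and $\uu^k=P_{\Fhat_\infty}\ww^k$ with $\ww^k\neq\uu^k$, $\vv^k\to\bv\in \Fhat_\infty$, $\ww^k\to\bv$, and $\frakg_\infty(\|\ww^k-\vv^k\|)/\|\ww^k-\uu^k\|\to 0$. Since $\{\zz\}^\perp$ is the $xy$-plane, $\ww^k=(v^k_x,v^k_y,0)$; and since $\bar v_y=\bar v_z=0$, we have $v^k_y\to 0$ and $\|\vv^k-\ww^k\|\to 0$. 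The case $\vv^k\in \Fhat_{-\infty}$ occurring infinitely often can be ruled out: then $v^k_y=0$, hence $\ww^k=(v^k_x,0,0)\in \Fhat_\infty$ forces $\uu^k=\ww^k$, contradicting $\ww^k\neq\uu^k$. Thus, after extracting a subsequence, $v^k_y>0$ and $\vv^k=(v^k_x,v^k_y,v^k_ye^{v^k_x/v^k_y})$, and $\|\vv^k-\ww^k\|=v^k_y e^{v^k_x/v^k_y}=:t_k\to 0$. Further extract to one of two sub-cases: (II.a) $v^k_x\le 0$ for all $k$; or (II.b) $v^k_x>0$ for all $k$.

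In sub-case (II.a), the projection onto $\Fhat_\infty$ gives $\uu^k=(v^k_x,0,0)$, so $\|\ww^k-\uu^k\|=v^k_y$. For $k$ large, $\frakg_\infty(t_k)=-1/\ln t_k$, and a direct computation yields
\[
\frac{\frakg_\infty(t_k)}{\|\ww^k-\uu^k\|}=\frac{-1}{v^k_y\ln t_k}=\frac{-1}{v^k_y\ln v^k_y+v^k_x}.
\]
Since $v^k_y\ln v^k_y\to 0$ and $v^k_x\to\bar v_x\le 0$, the denominator tends to $\bar v_x$. If $\bar v_x<0$ the ratio tends to $-1/\bar v_x>0$; if $\bar v_x=0$, the (negative) denominator tends to $0$ and the ratio tends to $+\infty$. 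Either way the ratio does not vanish, a contradiction.

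In sub-case (II.b), necessarily $\bar v_x=0$, and $\uu^k=\mathbf{0}$, so $\|\ww^k-\uu^k\|=\sqrt{(v^k_x)^2+(v^k_y)^2}$. The main obstacle is to show
\[
|\ln t_k|\sqrt{(v^k_x)^2+(v^k_y)^2}\longrightarrow 0,
\]
which will force $\frakg_\infty(t_k)/\|\ww^k-\uu^k\|\to +\infty$. The key observation is the elementary inequality $s\le e^s$ applied to $s_k:=v^k_x/v^k_y>0$, which gives $v^k_x=s_k v^k_y\le v^k_y e^{s_k}=t_k$. Using $\sqrt{(v^k_x)^2+(v^k_y)^2}\le v^k_x+v^k_y$, we split into two pieces: $|\ln t_k|\,v^k_x\le t_k|\ln t_k|\to 0$, while $|\ln t_k|\,v^k_y=|v^k_y\ln v^k_y+v^k_x|\to 0$ since $v^k_y\ln v^k_y\to 0$ and $v^k_x\to 0$. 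Combining gives the claimed vanishing, hence the ratio blows up, contradicting $\frakg_\infty(t_k)/\|\ww^k-\uu^k\|\to 0$. Thus $\gamma_{\zz,\eta}\in(0,\infty]$, and Theorem~\ref{thm:1dfacesmain}(i) with $\alpha=1$ delivers the stated error bound with constant $\max\{2,2\gamma_{\zz,\eta}^{-1}\}$.
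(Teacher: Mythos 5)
Your proof is correct, and it follows the same overall strategy as the paper's: argue by contradiction using Lemma~\ref{lem:infratio}, then deliver the error bound via Theorem~\ref{thm:1dfacesmain}(i). However, the details differ in two ways worth noting. First, the paper cases on $\bv\neq\mathbf{0}$ vs.\ $\bv=\mathbf{0}$, treating the $\bv\neq\mathbf{0}$ situation (where $v^k_x<0$ eventually) and the $\bv=\mathbf{0}$, $v^k_x\le 0$ situation as separate branches, even though both reduce to the same ratio $-1/(v^k_y\ln v^k_y + v^k_x)$; you instead case directly on the sign of $v^k_x$, which merges these two branches into your sub-case (II.a) and is a genuine (if minor) streamlining. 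Second, and more substantively, your treatment of the sub-case $v^k_x>0$ is different: the paper rewrites the reciprocal ratio as $-h(v^k_z/v^k_y)\,v^k_z\ln v^k_z$ for the auxiliary function $h(t)=\tfrac{1}{t}\sqrt{1+(\ln t)^2}$ and argues that $h$ is bounded on $[1,\infty)$, while you bypass this by combining the elementary inequalities $s\le e^s$ (giving $v^k_x\le t_k$) and $\sqrt{a^2+b^2}\le a+b$ to show directly that $|\ln t_k|\sqrt{(v^k_x)^2+(v^k_y)^2}\to 0$. Your version avoids introducing and analyzing an auxiliary function at the cost of slightly more ad hoc manipulation; both are correct. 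One tiny cosmetic point: the inequality you quote for $\frakg_\infty(t)\ge t$ on $(0,1/e^2]$ is equivalent to $1/t+\ln t\ge 0$ rather than $\ge 1$ (though, as it happens, $1/t+\ln t\ge e^2-2>1$ does hold on that interval, so your stated form is also true).
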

\begin{proof}
	Take $\bv \in \Fhat_\infty$ and a sequence $\{\vv^k\}\subset \bd K_{\exp}\cap B(\eta) \backslash \Fhat_\infty$ such that
	\begin{equation*}
	\underset{k \rightarrow \infty}{\lim}\vv^k = \underset{k \rightarrow \infty}{\lim}\ww^k =\bv,
	\end{equation*}
	where $\ww^k = P_{\{\zz\}^\perp}\vv^k$, $\uu^k = P_{\Fhat_\infty}\ww^k$, and $\ww^k\neq \uu^k$.
	Since $\ww^k\neq \uu^k$, in view of \eqref{Finfinity} and \eqref{Fneginfinity}, we must have $\vv^k\notin \Fhat_{-\infty}$. Then, from \eqref{d:Kexp} and \eqref{Finfinity}, we have
	\begin{equation}\label{vwurepre}
	\vv^k = (v^k_x,v^k_y,v^k_ye^{v^k_x/v^k_y}) \mbox{ with } v^k_y>0,\ \ \ww^k = (v^k_x,v^k_y,0)\ \ {\rm and}\ \ \uu^k = (\min\{v^k_x,0\},0,0).
	\end{equation}
	Since $\ww^k\to \bv$ and $\vv^k\to \bv$, without loss of generality, by passing to a subsequence if necessary, we assume in addition that $\|\ww^k - \vv^k\|\le e^{-2}$ for all $k$.
	From \eqref{vwurepre} we conclude that $\vv^k \neq \ww^k$, hence $\frakg_\infty(\|\ww^k - \vv^k\|) = -(\ln\|\ww^k - \vv^k\|)^{-1}$.

	We consider the following two cases in order to show that \eqref{infratiohkb} does not hold for $\frakg = \frakg_\infty$:
	\begin{enumerate}[(I)]
		\item\label{finalebcasevneq0} $\bv\neq \mathbf{0}$;
		\item\label{finalebcasev0} $\bv = \mathbf{0}$.
	\end{enumerate}
	
	\ref{finalebcasevneq0}: In this case, we have $\bv = ({\bar{v}}_x,0,0)$ for some ${\bar{v}}_x < 0$. This implies that $v^k_x<0$ for all large $k$.  Hence, we have from \eqref{vwurepre} that for all large $k$,
	\begin{equation*}
	\frac{\frakg_\infty(\|\ww^k - \vv^k\|) }{\|\ww^k - \uu^k \|} = \frac{-(\ln\|\ww^k-\vv^k\|)^{-1}}{\|\ww^k - \uu^k\|} = -\frac{1}{v^k_y\left(v^k_x/v^k_y + \ln v^k_y \right)} = -\frac{1}{v^k_y\ln v^k_y + v^k_x} \to -\frac1{{\bar{v}}_x} > 0
	\end{equation*}
	since $v^k_y\to 0$ and $v^k_x\to {\bar{v}}_x < 0$. This shows that \eqref{infratiohkb} does not hold for $\frakg = \frakg_\infty$.
	
	\ref{finalebcasev0}: If $v_x^k\le 0$ infinitely often, by extracting a subsequence, we assume that $v_x^k\le 0$ for all $k$. Since $\ww^k\neq \uu^k$ (and $\ww^k\neq \vv^k$), we note from \eqref{vwurepre} that
	\[
	-\frac{1}{v^k_y\ln v^k_y + v^k_x} = \frac{-(\ln\|\ww^k-\vv^k\|)^{-1}}{\|\ww^k - \uu^k\|} \in (0,\infty)\ \ \mbox{for all}\ k.
	\]
	Since $\{-(v^k_y\ln v^k_y + v^k_x)\}$ is a positive sequence and it converges to zero as $(v^k_x,v^k_y)\to 0$, it follows that $\lim\limits_{k\to\infty}\frac{-(\ln\|\ww^k-\vv^k\|)^{-1}}{\|\ww^k - \uu^k\|}=\infty$. This shows that \eqref{infratiohkb} does not hold for $\frakg = \frakg_\infty$.
	
	Now, it remains to consider the case that $v_x^k>0$ for all large $k$. By passing to a subsequence if necessary, we assume that $v_x^k>0$ for all $k$. By solving for $v_x^k$ from $v^k_z=v^k_y e^{v^k_x/v^k_y} > 0$ and noting \eqref{vwurepre}, we obtain that
	\begin{equation}\label{vwurepre2}
	\vv^k = (v^k_y\ln(v_z^k/v_y^k),v^k_y,v^k_z) \mbox{ with } v^k_y>0,\ \ \ww^k = (v^k_y\ln(v_z^k/v_y^k),v^k_y,0)\ \ {\rm and}\ \ \uu^k = (0,0,0).
	\end{equation}
	Also, we note from $v_x^k=v^k_y\ln(v_z^k/v_y^k)>0$, $v_y^k>0$ and the monotonicity of $\ln(\cdot)$ that for all $k$,
	\begin{equation}\label{bugfix}
	v^k_z>v^k_y>0.
	\end{equation}
	Next consider the function $h(t) := \frac{1}t\sqrt{1+(\ln t)^2}$ on $[1,\infty)$. Then $h$ is continuous and positive. Since $h(1)=1$ and $\lim_{t\to \infty}h(t) = 0$, there exists $M_h$ such that $h(t)\le M_h$ for all $t\ge 1$. Now, using \eqref{vwurepre2}, we have, upon defining $t_k:= v_z^k/v_y^k$ that
	\[
	\begin{aligned}
	\frac{\|\ww^k - \uu^k\|}{-(\ln\|\ww^k-\vv^k\|)^{-1}} &= \frac{v_y^k\sqrt{1+[\ln(v_z^k/v_y^k)]^2}}{-(\ln v_z^k)^{-1}} = -v_y^k\sqrt{1+[\ln(v_z^k/v_y^k)]^2}\ln v_z^k\\
	& \overset{\rm (a)}= -\frac{v_y^k}{v_z^k}\sqrt{1+\left[\ln\left(\frac{v_z^k}{v_y^k}\right)\right]^2}v_z^k\ln v_z^k \overset{\rm (b)}= - h(t_k)v_z^k\ln v_z^k \overset{\rm (c)}\le -M_hv_z^k\ln v_z^k,
	\end{aligned}
	\]
	where the division by $v_z^k$ in (a) is legitimate because $v_z^k>0$, (b) follows from the definition of $h$ and the fact that $t_k > 1$ (see \eqref{bugfix}), and (c) holds because of the definition of $M_h$ and the fact that $-\ln v_z^k > 0$ (thanks to $v_z^k = \|\ww^k - \vv^k\|\le e^{-2}$). Since $v^k_z\to 0$, it then follows that $\left\{\frac{\|\ww^k - \uu^k\|}{-(\ln\|\ww^k-\vv^k\|)^{-1}}\right\}$ is a positive sequence that converges to zero. Thus, $\lim\limits_{k\to\infty}\frac{-(\ln\|\ww^k-\vv^k\|)^{-1}}{\|\ww^k - \uu^k\|}=\infty$, which again shows that \eqref{infratiohkb} does not hold for $\frakg = \frakg_\infty$.

	Having shown that \eqref{infratiohkb} does not hold for $\frakg = \frakg_\infty$, in view of Lemma~\ref{lem:infratio}, we must have $\gamma_{\zz,\eta} \in (0,\infty]$. Then the result follows from Theorem~\ref{thm:1dfacesmain} and \eqref{def:frakg_p}.
\end{proof}

Combining Theorem~\ref{thm:nonzerogammasec71}, Theorem~\ref{thm:nonzerogammasec72} and Lemma~\ref{lem:facialresidualsbeta}, and noting \eqref{def:frakg_p} and $\gamma_{\zz,0}=\infty$ (see \eqref{gammabetaeta}), we can now summarize the {\oneFRF}s derived in this section in the following corollary.
\begin{corollary}[{\oneFRFs} concerning $\Fhat_{\infty}$]\label{col:1dfaces_infty}
	Let $\zz\in K_{\exp}^*$ with $z_x=0$ and $\{\zz \}^\perp \cap K_{\exp}=\Fhat_\infty$.
	\begin{enumerate}[{\rm (i)}]
		\item \label{lem:facialresidualsbeta:1}In the case when $z_y > 0$, let $\kappa_{\zz,t}$ be defined as in \eqref{haha2} with $\frakg = |\cdot|$. Then the function $\psi_{\stdCone,\zz}:\RR_+\times\RR_+\to \RR_+$ given by
		\begin{equation*}
		\psi_{\stdCone,\zz}(\epsilon,t):=\max \left\{\epsilon,\epsilon/\|\zz\| \right\} + \kappa_{\zz,t}(\epsilon +\max \left\{\epsilon,\epsilon/\|\zz\| \right\} )
		\end{equation*}
		is a {\oneFRFs} for $K_{\exp}$ and $\zz$.
In particular, there exist $\kappa > 0$ and a nonnegative monotone nondecreasing function $\rho:\RR_+ \to \RR_+$ such that the function $\hat \psi_{\stdCone,\zz}$ given by $\hat \psi_{\stdCone,\zz}(\epsilon,t) \coloneqq \kappa \epsilon + \rho(t)\epsilon$ is a {\oneFRFs} for $K_{\exp}$ and $\zz$.
		\item \label{lem:facialresidualsbeta:2} In the case when $z_y = 0$, let $\kappa_{\zz,t}$ be defined as in \eqref{haha2} with $\frakg = \frakg_\infty$ in \eqref{def:frakg}. Then the function $\psi_{\stdCone,\zz}:\RR_+\times\RR_+\to \RR_+$ given by
		\begin{equation*}
		\psi_{\stdCone,\zz}(\epsilon,t):=\max \left\{\epsilon,\epsilon/\|\zz\| \right\} + \kappa_{\zz,t}\frakg_\infty(\epsilon +\max \left\{\epsilon,\epsilon/\|\zz\| \right\} )
		\end{equation*}
		is a {\oneFRFs} for $K_{\exp}$ and $\zz$. In particular,
	there exist $\kappa > 0$ and a nonnegative monotone nondecreasing function $\rho:\RR_+ \to \RR_+$ such that the function $\hat \psi_{\stdCone,\zz}$ given by $\hat \psi_{\stdCone,\zz}(\epsilon,t) \coloneqq \kappa \epsilon + \rho(t)\frakg_{\infty}(\epsilon)$ is a {\oneFRFs}   for $K_{\exp}$ and $\zz$.
	\end{enumerate}
\end{corollary}

\subsubsection{The non-exposed face $\Fhat_{ne}$}

Recall the unique non-exposed face of $K_{\exp}$:
$$
\stdFace_{ne} := \{(0,0,z) \;|\; z \geq 0\}.
$$
In this subsection, we take a look at $\Fhat_{ne}$. Note that $\Fhat_{ne}$ is an exposed face of $\Fhat_{-\infty}$, which is polyhedral. This observation leads immediately to the following corollary, which also follows from  \cite[Proposition~18]{L17} by letting $\stdFace\coloneqq\stdCone \coloneqq\Fhat_{-\infty}$ therein. We omit the proof for brevity.
\begin{corollary}[{\oneFRFs} for $\Fhat_{ne}$]\label{col:frf_ne}
Let $\zz \in \ \Fhat_{-\infty}^*$ be such that
$\Fhat_{ne} = \Fhat_{-\infty} \cap \{\zz\}^\perp$.
Then there exists $\kappa > 0$ such that
\[
\psi _{\Fhat_{-\infty},\zz }(\epsilon,t) \coloneqq
\kappa\epsilon
\]
is a {\oneFRFs} for $\Fhat_{-\infty}$ and
$\zz$.
\end{corollary}	

\subsection{Error bounds}\label{sec:exp_err}
In this subsection, we return to the feasibility problem \eqref{eq:feas} and consider the case
where $\stdCone = \expCone$. We now have all the
tools for obtaining error bounds.
Recalling Definition~\ref{def:hold}, we can state the following result.

\begin{theorem}[Error bounds for \eqref{eq:feas} with $\stdCone = \expCone$]\label{thm:main_err}
Let $\stdSpace \subseteq \RR^3$ be a subspace and $\abd \in \RR^3$ such that $(\stdSpace + \abd) \cap \expCone \neq \emptyset$.
Then the following items hold.
\begin{enumerate}
	\item The distance to the PPS condition of $\{\expCone, \stdSpace+\abd\}$ satisfies $\dpp(\expCone,\stdSpace+\abd) \leq 1$.
	\item\label{thm:mainii} If $\dpp(\expCone,\stdSpace+\abd)=0 $,
	then $\expCone$ and $\stdSpace+\abd$ satisfy a Lipschitzian error bound.
	\item\label{thm:mainiii} Suppose $\dpp(\expCone,\stdSpace+\abd)=1$ and
	let $\stdFace \subsetneq \expCone$ be a chain of faces of length $2$ satisfying items {\rm(ii)} and {\rm(iii)} of Proposition~\ref{prop:fra_poly}. We have the following possibilities.
	\begin{enumerate}[label={\rm (\alph*)}]		
		\item\label{thm:mainiiia} If $\stdFace = \Fhat_{-\infty}$ then $\expCone$ and $\stdSpace+\abd$ satisfy an entropic error bound as in \eqref{eq:entropic_err}.
		In addition, for all $\alpha \in (0,1)$, a	uniform H\"olderian error bound with exponent $\alpha$ holds.
		\item\label{thm:mainiiib}  If $\stdFace = \Fhat_{\beta}$, with
		$\beta \in \RR$, then  $\expCone$ and $\stdSpace+\abd$ satisfy a uniform H\"olderian error bound with exponent $1/2$.
		\item\label{thm:mainiiic}  Suppose that $\stdFace = \Fhat_{\infty}$.
		If 	there exists $\zz\in \expCone^* \cap \stdSpace^\perp \cap \{a\}^\perp$ with $z_x=0$, $z_y > 0$ and $z_z>0$ 		
		then  $\expCone$ and $\stdSpace+\abd$ satisfy a Lipschitzian error bound. Otherwise, $\expCone$ and $\stdSpace+\abd$ satisfy a log-type error bound as in \eqref{eq:log_err}.
		\item \label{thm:mainivc} If $\stdFace = \{\mathbf{0} \}$, then $\expCone$ and $\stdSpace+\abd$ satisfy a Lipschitzian error bound.
	\end{enumerate}
\end{enumerate}
\end{theorem}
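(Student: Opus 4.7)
The plan is to apply the abstract error bound machinery of Theorem~\ref{theo:err} in each case, combining the facial classification of Section~\ref{sec:facial_structure} with the facial residual functions (FRFs) built in Section~\ref{sec:exp_frf}. For item~(1), we observe that every proper face of $\expCone$ is polyhedral while $\expCone$ itself is not, so the chain $\Fhat_{-\infty}\subsetneq\expCone$ realizes the longest chain of faces with the required polyhedrality pattern. Hence $\distP(\expCone)=1$, and Proposition~\ref{prop:fra_poly}\ref{prop:fra_poly:1} yields $\dpp(\expCone,\stdSpace+\abd)\le 1$. For item~\ref{thm:mainii}, the PPS condition holds by definition of $\dpp=0$, so Proposition~\ref{prop:pps_er} delivers a Lipschitzian error bound directly.

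For item~\ref{thm:mainiii}, Proposition~\ref{prop:fra_poly} supplies a chain $\stdFace\subsetneq\expCone$ of length two together with an exposing vector $\zz\in\expCone^*\cap\stdSpace^\perp\cap\{\abd\}^\perp$ for which $\stdFace=\expCone\cap\{\zz\}^\perp$. The classification in Section~\ref{sec:facial_structure} forces $\stdFace$ to be one of $\Fhat_{-\infty}$, $\Fhat_\beta$ with $\beta\in\RR$, $\Fhat_\infty$, or $\{\mathbf{0}\}$. In each case we invoke Theorem~\ref{theo:err} with $\ell=2$, plugging in the FRF produced earlier for the corresponding face. For case~\ref{thm:mainiiia}, Corollary~\ref{col:frf_2dface_entropic} supplies FRFs of the form $\kappa\epsilon+\rho(\norm{\xx})\frakg(\epsilon)$ for both $\frakg=\frakg_{-\infty}$ (giving the entropic bound) and $\frakg=|\cdot|^\alpha$, $\alpha\in(0,1)$ (giving the uniform H\"olderian bound with exponent $\alpha$). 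For case~\ref{thm:mainiiib}, Corollary~\ref{col:frf_fb} yields an FRF of the form $\kappa\epsilon+\rho(\norm{\xx})\sqrt{\epsilon}$, which translates under Theorem~\ref{theo:err} into a uniform H\"olderian bound with exponent $1/2$. For case~\ref{thm:mainiiic}, whenever some admissible $\zz$ has $z_y>0$ we select that vector and Corollary~\ref{col:1dfaces_infty}\ref{lem:facialresidualsbeta:1} gives a Lipschitz FRF; otherwise every admissible $\zz$ must satisfy $z_y=0$, and Corollary~\ref{col:1dfaces_infty}\ref{lem:facialresidualsbeta:2} supplies the log-type FRF $\kappa\epsilon+\rho(\norm{\xx})\frakg_\infty(\epsilon)$, which feeds into Theorem~\ref{theo:err} to yield the advertised log-type bound.

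Case~\ref{thm:mainivc} requires one small extra step since Section~\ref{sec:exp_frf} addresses only the nontrivial exposed faces. Here $\{\zz\}^\perp\cap\expCone=\{\mathbf{0}\}$ forces $\zz\in\inte\expCone^*$, so there exists $\delta>0$ with $B(\zz,\delta)\subset\expCone^*$. For any unit vector $\yy$ we have $\zz-\delta\yy\in\expCone^*$, which on pairing with $\pp\in\expCone$ gives $\inProd{\pp}{\yy}\le\delta^{-1}\inProd{\pp}{\zz}$, hence $\norm{\pp}\le\delta^{-1}\inProd{\pp}{\zz}$ for all $\pp\in\expCone$. Applied to $\pp=P_\expCone(\xx)$ and combined with the nonexpansiveness of $P_\expCone$, this immediately produces a Lipschitz facial residual function $\psi_{\expCone,\zz}(\epsilon,t)=\kappa\epsilon$ for $\expCone$ and $\zz$ with respect to $\expCone$. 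Theorem~\ref{theo:err} then yields a Lipschitzian error bound.

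The main labour has already been performed in Section~\ref{sec:exp_frf}: once the face-by-face FRFs are in hand, the present theorem amounts to enumerating the exposed faces of $\expCone$ and matching each with its FRF inside Theorem~\ref{theo:err}. The only genuinely new step is the Lipschitz FRF for an interior dual vector required in case~\ref{thm:mainivc}, which is a short exercise in dual-cone geometry.
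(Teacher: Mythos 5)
Your proposal is correct and mirrors the paper's argument almost line by line: item~(1) uses $\distP(\expCone)=1$ and Proposition~\ref{prop:fra_poly}\ref{prop:fra_poly:1}; item~\ref{thm:mainii} is Proposition~\ref{prop:pps_er}; and each subcase of item~\ref{thm:mainiii} is obtained by feeding the matching facial residual function from Corollaries~\ref{col:frf_2dface_entropic}, \ref{col:frf_fb}, and \ref{col:1dfaces_infty} into Theorem~\ref{theo:err}. The one place you deviate is case~\ref{thm:mainivc}: the paper simply cites \cite[Proposition~27]{L17}, whereas you give a short self-contained argument. That argument is sound: $\{\zz\}^\perp\cap\expCone=\{\mathbf 0\}$ together with compactness of the unit sphere in $\expCone$ implies $\zz\in\inte\expCone^*$, which gives a constant $\delta>0$ with $\norm{\pp}\le\delta^{-1}\inProd{\pp}{\zz}$ for all $\pp\in\expCone$; applying this to $\pp=P_\expCone(\xx)$ and using nonexpansiveness of the projection yields $\norm{\xx}\le\bigl(1+\delta^{-1}(1+\norm\zz)\bigr)\epsilon$ whenever $\dist(\xx,\expCone)\le\epsilon$ and $\inProd{\xx}{\zz}\le\epsilon$, i.e.\ a Lipschitz FRF. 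The tradeoff is that the citation is shorter, while your direct derivation keeps the treatment of $\expCone$ self-contained and makes clear that the only special feature of the trivial face $\{\mathbf 0\}$ being used is the interior-dual-vector estimate; neither route is substantively stronger, and both compose correctly with Theorem~\ref{theo:err}.
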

\begin{proof}
(i): All proper faces of $\expCone$ are polyhedral, therefore $\ell_{\text{poly}}(\expCone) = 1$. By item \ref{prop:fra_poly:1} of Proposition~\ref{prop:fra_poly}, there exists a
chain of length $2$ satisfying item \ref{prop:fra_poly:3} of Proposition~\ref{prop:fra_poly}. Therefore, $\dpp(\expCone,\stdSpace+\abd) \leq 1$.

(ii): If  $\dpp(\expCone,\stdSpace+\abd) = 0$, it is because $\{\expCone, \stdSpace+\abd\}$ satisfies the PPS condition, which implies a Lipschitzian error bound
by Proposition~\ref{prop:cq_er}.

(iii): Next,  suppose $\dpp(\expCone,\stdSpace+\abd)=1$ and
let $\stdFace \subsetneq \expCone$ be a chain of faces of length $2$ satisfying items \ref{prop:fra_poly:2} and \ref{prop:fra_poly:3} of Proposition~\ref{prop:fra_poly}, together with  $\zz\in K_{\exp}^* \cap \stdSpace^\perp \cap \{\abd\}^\perp$ such that
\[
\stdFace = \expCone \cap \{\zz\}^\perp.
\]
Since positively scaling $\zz$ does not affect the chain of faces, we may assume that $\norm{\zz} = 1$.
Also, in what follows, for simplicity, we define
\[
\wdist(\xx) \coloneqq \max \{\dist(\xx,\stdSpace+\abd), \dist(\xx, \expCone) \}.
\]
Then, we prove each item by applying Theorem~\ref{theo:err} with the corresponding facial residual function.
\begin{enumerate}[{\rm (a)}]	
\item 	 If $\stdFace = \Fhat_{-\infty}$, the
{\oneFRF}s are given by Corollary~\ref{col:frf_2dface_entropic}. First we consider the case where $\frakg = \frakg_{-\infty}$ and we have
\begin{equation*}
\psi_{\stdCone,\zz}(\epsilon,t):= \epsilon + \kappa_{\zz,t}\frakg_{-\infty}(2\epsilon ),
\end{equation*}
where $\frakg_{-\infty}$ is as in \eqref{d:entropy}.
Then, if
$\psi$ is a positively rescaled shift of
$\psi_{\stdCone,\zz}$, using the monotonicity
of $\frakg_{-\infty}$ and of $\kappa_{\zz,t}$ as a function of $t$, we conclude that
there exists $\widehat M  > 0$ such that
\begin{equation*}
\psi(\epsilon,t) \leq \widehat M \epsilon + \widehat M \kappa_{\zz,  \widehat M t}\frakg_{-\infty}(\widehat M\epsilon ).
\end{equation*}

Invoking Theorem~\ref{theo:err}, using the monotonicity of all functions involved in the definition of $\psi$ and recalling \eqref{d:entropy_p},
we conclude that for every bounded set $B$, there exists $\kappa _B > 0$
\begin{equation}\label{eq:entropic_err}
\dist\left(\xx, (\stdSpace + \abd) \cap \expCone\right) \leq \kappa_{B}\frakg_{-\infty}(\wdist(\xx)), \qquad \forall \xx \in B,
\end{equation}
which shows that an entropic error bound holds.

Next, we consider the case $\frakg = |\cdot|^{\alpha}$.
Given $\alpha \in (0,1)$, we have the following {\oneFRF}:
\begin{equation*}
\psi_{\stdCone,\zz}(\epsilon,t):= \epsilon + \kappa_{\zz,t}2^\alpha \epsilon^\alpha,
\end{equation*}
where  $\kappa_{\zz,t}$ is defined as in \eqref{haha2}.
Invoking Theorem~\ref{theo:err},
we conclude that for
every bounded set $B$, there exists $\kappa _B > 0$ such that
\[
\dist\left(\xx, (\stdSpace + \abd) \cap \expCone\right) \leq \kappa _B \wdist(\xx) +  \kappa_{B} \wdist(\xx)^\alpha, \qquad \forall \xx \in B,
\]
In addition, for $\xx \in B$, we have
$\wdist(\xx) \leq \wdist(\xx)^\alpha  {M}$, where
$M = \sup _{\xx \in B} \wdist(\xx)^{1-\alpha}$.
In conclusion, for  $\kappa = 2\kappa_{B}\max\{M,1\}$, we have
\[
\dist\left(\xx, (\stdSpace + \abd) \cap \expCone\right) \leq \kappa \wdist(\xx)^\alpha, \qquad \forall \xx \in B.
\]
That is, a uniform H\"olderian error bound holds with exponent $\alpha$.
\item  If $\stdFace = \Fhat_{\beta}$, with
$\beta \in \RR$, then the {\oneFRF} is given by Corollary~\ref{col:frf_fb}, that is, we have
\begin{equation*}
\psi_{\stdCone,\zz}(\epsilon,t) := \epsilon + \kappa_{\zz,t}\sqrt{2} \epsilon^{1/2},
\end{equation*}
Then, following the same argument as in the second half of item (a), we conclude that a uniform H\"olderian error bound holds with exponent $1/2$.
\item If $\stdFace = \Fhat_{\infty}$, the {\oneFRF}s are given by Corollary~\ref{col:1dfaces_infty} and they depend on $\zz$. Since $\stdFace = \Fhat_{\infty}$, we must have $z_x = 0$ and $z_z > 0$, see Section~\ref{sec:exposed}.

The deciding factor is whether $z_y$ is positive or zero. If $z_y > 0$, then we have
the following {\oneFRF}:
\begin{equation*}
\psi_{\stdCone,\zz}(\epsilon,t):=
(1+2 \kappa_{\zz,t})\epsilon,
\end{equation*}
where $\kappa_{\zz,t}$ is defined as in \eqref{haha2}.
In this case, analogously to items (a) and (b) we have a Lipschitzian error bound.

If $z_y = 0$,
we have
\begin{equation*}
\psi_{\stdCone,\zz}(\epsilon,t):= \epsilon + \kappa_{\zz,t}\frakg_\infty(2\epsilon ),
\end{equation*}
where $\frakg_\infty$ is as in \eqref{def:frakg}.
Analogous to the proof of item (a) but making use of \eqref{def:frakg_p} in place of \eqref{d:entropy_p},
we conclude that for
every bounded set $B$, there exists $\kappa _B > 0$ such that
\begin{equation}\label{eq:log_err}
\dist\left(\xx, (\stdSpace + \abd) \cap \expCone\right) \leq \kappa_{B}\frakg_\infty(\wdist(\xx))), \qquad \forall \xx \in B.
\end{equation}
\item See \cite[Proposition~27]{L17}.
\end{enumerate}
\end{proof}
\begin{remark}[Tightness of Theorem~\ref{thm:main_err}]\label{rem:opt}
We will argue that Theorem~\ref{thm:main_err} is tight by showing that for every situation described in item (iii), there is a specific choice of $\stdSpace$ and a sequence $\{\ww^k\}$ in $\stdSpace\backslash\expCone$ with $\dist(\ww^k,\expCone) \to 0$ along which the corresponding
error bound for $\expCone$ and $\stdSpace$ is off by at most a multiplicative constant.
	
\begin{enumerate}[{\rm (a)}]
\item\label{opt:a} Let $\stdSpace = \spanVec \Fhat_{-\infty} = \{(x,y,z) \mid y = 0 \}$ (see \eqref{eq:exp_2d}) and consider the sequence $\{\ww^k\}$ where $\ww^k = ((1/(k+1))\ln(k+1),0,1)$, for every $k \in \NN$. Then, $\stdSpace \cap \expCone = \Fhat_{-\infty}$ and we are under the conditions of item~\ref{thm:mainiii}\ref{thm:mainiiia} of Theorem~\ref{thm:main_err}. Since $\{\ww^k\} =: B \subseteq \stdSpace$, there exists $\kappa_B > 0$ such that
\[
\dist\left(\ww^k, \stdSpace \cap \expCone\right) \leq \kappa_{B}\frakg_{-\infty}(\dist(\ww^k, \expCone)), \quad \forall k \in \NN.
\]
Then, the projection of $\ww^k$ onto
$\Fhat_{-\infty}$ is given by $(0,0,1)$.
 Therefore,
\[
\frac{\ln(k+1)}{k+1} = \dist(\ww^k,\stdSpace\cap \expCone) \leq \kappa_{B}\frakg_{-\infty}(\dist(\ww^k, \expCone)).
\]
Let $\vv^k = ((1/(k+1))\ln(k+1),1/(k+1),1)$ for every $k$. Then, we have $\vv^k \in \expCone$.
Therefore, $\dist(\ww^k, \expCone) \leq 1/(k+1)$.
In view of the definition of $\frakg_{-\infty}$ (see \eqref{d:entropy}), we conclude that for large enough $k$ we have
\[
\frac{\ln(k+1)}{k+1} = \dist(\ww^k,\stdSpace\cap \expCone) \leq \kappa_{B}\frakg_{-\infty}(\dist(\ww^k, \expCone)) \leq \kappa_B\frac{\ln(k+1)}{k+1}.
\]
Thus, it holds that for all sufficiently large $k$,
\[
1\le \frac{\dist(\ww^k,\stdSpace\cap \expCone)}{\frakg_{-\infty}(\dist(\ww^k,\expCone))} \le \kappa_B.
\]
Consequently, for any given nonnegative function $\frakg:\RR_+\to \RR_+$ such that $\lim_{t\downarrow 0}\frac{\frakg(t)}{\frakg_{-\infty}(t)}=0$, we have upon noting $\dist(\ww^k,\expCone)\to 0$ that
\[
\frac{\dist(\ww^k,\stdSpace\cap \expCone)}{\frakg(\dist(\ww^k,\expCone))} = \frac{\dist(\ww^k,\stdSpace\cap \expCone)}{\frakg_{-\infty}(\dist(\ww^k,\expCone))}\frac{\frakg_{-\infty}(\dist(\ww^k,\expCone))}{\frakg(\dist(\ww^k,\expCone))} \to \infty,
\]
which shows that the choice of $\frakg_{-\infty}$ in the error bound is tight.
\item Let $\beta \in \RR$ and let $\hzz$, $\hpp$ and
$\hff$ be as in \eqref{veczfp}. Let
$\stdSpace = \{\zz\}^\perp$ with $z_x < 0$ such that
$\expCone \cap \stdSpace = \Fhat_{\beta}$. We are then under the conditions of item~\ref{thm:mainiii}\ref{thm:mainiiib} of Theorem~\ref{thm:main_err}.
We consider the following sequences
\begin{equation*}
\vv^k = \begin{bmatrix}
1-\beta +1/k\\ 1\\ e^{1-\beta + 1/k}
\end{bmatrix},\quad \ww^k = P_{\{\zz\}^\perp}\vv^k,\quad \uu^k = P_{\Fhat_\beta}\ww^k.
\end{equation*}
For every $k$ we have $\vv^k \in \partial \expCone \setminus \Fhat_{\beta}$, and $\vv^k \neq \ww^k$ (because otherwise, we would have $\vv^k \in \expCone \cap \{\zz\}^\perp = \Fhat_{\beta}$).
In addition, we have $\vv^k \to \hff$ and, since $\hff \in \Fhat_{\beta}$, we have  $\ww^k \to \hff$ as well.

Next, notice that we have $\langle \hff, \vv^k \rangle \geq 0$ for $k$ sufficiently large and $|v_x^k/v_y^k - (1-\beta)| \rightarrow 0$. Then, following the computations outlined in case~\ref{ebsubcasev0} of the proof of Theorem~\ref{thm:nonzerogamma} and letting $\zeta_k\coloneqq v_x^k/v_y^k$, we have from \eqref{hahahehe1} and \eqref{newlyadded} that $h_2(\zeta_k)\neq 0$ for all large $k$ (hence, $\ww^k\neq \uu^k$ for all large $k$), and that
\begin{equation}\label{eq:beta_limit}
L_{\beta} \coloneqq \lim_{k \rightarrow \infty}\frac{\|\ww^k-\vv^k\|^{\frac{1}{2}}}{\|\ww^k-\uu^k\|}=\lim_{k \rightarrow \infty}\frac{\|\hpp\|}{\|\hzz\|^{\frac{1}{2}}}\frac{|h_1(\zeta_k)|^{\frac{1}{2}}}{|h_2(\zeta_k)|} = \frac{\|\hpp\|}{\|\hzz\|^{\frac{1}{2}}}\frac1{\sqrt{2}(e^{\beta-1} + (\beta^2+1)e^{1-\beta})} \in(0,\infty),
\end{equation}
where the latter equality is from \eqref{taylor_limit}.
On the other hand, from item~\ref{thm:mainiii}\ref{thm:mainiiib} of Theorem~\ref{thm:main_err}, for $B \coloneqq \{\ww^k\}$, there exists $\kappa_B > 0$ such that for all $k\in \NN$,
\[
\|\ww^k-\uu^k\| = \dist(\ww^k, \stdSpace\cap\expCone) \leq \kappa_B \dist(\ww^k,\expCone)^{\frac{1}{2}} \leq \kappa_B\|\ww^k-\vv^k\|^{\frac{1}{2}}.
\]
However, from \eqref{eq:beta_limit}, for large enough $k$, we have $\|\ww^k-\uu^k\| \geq 1/(2L_{\beta})\|\ww^k-\vv^k\|^{\frac{1}{2}}$.
Therefore, for large enough $k$ we have
\[
\frac{1}{2L_{\beta}}\|\ww^k-\vv^k\|^{\frac{1}{2}} \leq \dist(\ww^k, \stdSpace\cap\expCone)\leq \kappa_B \dist(\ww^k,\expCone)^{\frac{1}{2}} \leq  \kappa_B\|\ww^k-\vv^k\|^{\frac{1}{2}}.
\]
Consequently, it holds that for all large enough $k$,
\[
\frac{1}{2L_\beta}\le \frac{\dist(\ww^k,\stdSpace\cap \expCone)}{\dist(\ww^k,\expCone)^\frac12} \le \kappa_B.
\]
Arguing similarly as in case (a), we can also conclude that the choice of $|\cdot|^\frac12$ in the error bound is tight.
\item 	Let $\zz= (0,0,1)$ and $\stdSpace = \{(x,y,0) \mid x,y \in \RR \} = \{\zz\}^\perp$.
Then, from \eqref{Finfinity}, we have
$\stdSpace \cap \expCone = \Fhat_{\infty}$.
We are then under case \ref{thm:mainiii}\ref{thm:mainiiic} of Theorem~\ref{thm:main_err}.
Because there is no $\hat \zz \in \stdSpace^\perp $ with $\hat z _y > 0$, we have a log-type error bound as in \eqref{eq:log_err}.

We proceed as in item \ref{opt:a} using
sequences such that $\ww^k=(-1,1/k,0)$,
$\vv^k=(-1,1/k,(1/k)e^{-k})$, $\uu^k=(-1,0,0)$, for every $k$.
Note that $\ww^k \in \stdSpace, \vv^k \in \expCone$ and $\proj{\Fhat_\infty}(\ww^k) = \uu^k$, for every $k$. Therefore, there exists $\kappa_B > 0$ such that
\begin{equation*}
\frac{1}{k} = \dist(\ww^k, \stdSpace \cap \expCone) \leq \kappa_B \frakg_{\infty}(\dist(\ww^k,\expCone))\leq \kappa_{B}\frakg_{\infty}\left(\frac{1}{ke^k}\right), \quad \forall k \in \NN.
\end{equation*}
In view of the definition of $\frakg_{\infty}$ (see \eqref{def:frakg}), there exists $L > 0$ such that for large enough $k$ we have
\begin{equation*}
\frac{1}{k} = \dist(\ww^k, \stdSpace \cap \expCone) \le \kappa_B \frakg_{\infty}(\dist(\ww^k,\expCone)) \leq \frac{L}{k}.
\end{equation*}
Consequently, it holds that for all large enough $k$,
\[
\frac{\kappa_B}{L}\le \frac{\dist(\ww^k,\stdSpace\cap \expCone)}{\frakg_{\infty}(\dist(\ww^k,\expCone))} \le \kappa_B.
\]
Arguing similarly as in case (a), we conclude that the choice of $\frakg_{\infty}$ is tight.
\end{enumerate}
Note that a Lipschitz error bound is always tight  up to a constant, because
$\dist(\xx,\stdCone\cap (\stdSpace+\abd)) \geq \max\{\dist(\xx,\stdCone),\dist(\xx,\stdSpace+\abd)\}$. Therefore, the error bounds in items~\ref{thm:mainii}, \ref{thm:mainiii}\ref{thm:mainivc} and in the first half of \ref{thm:mainiii}\ref{thm:mainiiic} are tight.
\end{remark}

Sometimes we may need to consider direct products of multiple copies of $\expCone$ in order to model certain problems, i.e., our problem of interest could have the following shape:
\begin{equation*}
\text{find} \quad \xx \in (\stdSpace + \abd) \cap \stdCone, \label{eq:multiple_exp}
\end{equation*}
where $\stdCone = \expCone \times \cdots \times \expCone$ is a direct product of $m$ exponential cones.

Fortunately, we already have all the tools required to extend Theorem~\ref{thm:main_err} and compute error bounds for this case too. We recall that the faces of a direct product of cones are direct products of the faces of the individual cones.\footnote{Here is a sketch of the proof. If $\stdFace^1 \face \stdCone^1, \stdFace^{2} \face \stdCone^2$, then the definition of face implies that $\stdFace^1 \times \stdFace^{2} \face \stdCone^1 \times \stdCone^2$. For the converse, let $\stdFace \face \stdCone^1 \times \stdCone^2$ and let
	$\stdFace ^1, \stdFace^2$ be  the projection of $\stdFace$ onto the first variable and second variables, respectively. Suppose that
	$\xx,\yy \in \stdCone^1$ are such that $\xx+\yy \in \stdFace^1$. Then, $(\xx+\yy,\zz) \in \stdFace$ for some $\zz \in \stdCone^2$. Since $(\xx+\yy,\zz) = (\xx,\zz/2) + (\yy,\zz/2)$ and $\stdFace$ is a face, we conclude that   $(\xx,\zz/2), (\yy,\zz/2) \in \stdFace$ and $\xx, \yy \in \stdFace^1$. Therefore $\stdFace^1 \face \stdCone^1$ and, similarly, $\stdFace^2 \face \stdCone^2$. Then, the equality $\stdFace = \stdFace^1 \times \stdFace^2$ is proven using the definition of face and the fact that $(\xx,\zz) = (\xx,0) + (0,\zz)$.
	%opposite inclusion is verified by observing that if $\xx \in \stdFace^{1}$, then $(\xx,\zz) \in \stdFace$ for some $\zz$ so that $(\xx,0),(0,\zz) \in \stdFace$. Therefore, if $(\xx,\yy) \in \stdFace^1 \times \stdFace^2$, $(\xx,0)$ and $(\yy,0)$ belong to $\stdFace$
} Therefore, using Proposition \ref{prop:frf_prod}, we are able to compute all the necessary {\oneFRF}s for $\stdCone$. Once they are obtained we can invoke Theorem~\ref{theo:err}.
Unfortunately, there is quite a number of different cases one must consider, so we cannot give a concise statement of an all-encompassing tight error bound result.

We will, however, given an error bound result under the following \emph{simplifying assumption of non-exceptionality} or SANE.
\begin{assumption}[SANE: simplifying assumption of non-exceptionality]
Suppose \eqref{eq:feas} is feasible with $\stdCone = \expCone \times \cdots \times \expCone$ being a direct product of $m$ exponential cones. We say that $\stdCone$ and $\stdSpace+\abd$ satisfy  the \emph{simplifying assumption of non-exceptionality} (SANE) if there exists a chain of faces $\stdFace _{\ell}  \subsetneq \cdots \subsetneq \stdFace_1 = \stdCone $ as in Proposition~\ref{prop:fra_poly}  with $\ell - 1 = {\dpp(\stdCone,\stdSpace+\abd)}$
such that for all $i$, the exceptional face $\Fhat_{\infty}$ of $\expCone$ never appears as one of the blocks of $\stdFace_{i}$.
\end{assumption}
\begin{remark}[SANE is not unreasonable]\label{rem:sane}
In many modelling applications of the exponential cone presented in \cite[Chapter~5]{MC2020}, translating to our notation, the $\yy$ variable is fixed to be $1$ in \eqref{d:Kexp}. For example, the hypograph of the logarithm function ``$x \leq \ln(z)$'' can be represented as the constraint ``$(x,y,z) \in \expCone \cap (\stdSpace +\abd)$'', where $\stdSpace +\abd = \{(x,y,z) \mid y = 1\}$. Because the $y$ variable is fixed to be $1$, the feasible region does not intersect the 2D face $\Fhat_{-\infty}$ nor its subfaces  $\Fhat_{\infty}$ and $\Fhat_{ne}$. In particular, SANE is satisfied.
More generally, if $\stdCone$ is a direct product of exponential cones and the affine space $\stdSpace +\abd$ is such that the $\yy$ components of each block are fixed positive constants, then $\stdCone$ and $\stdSpace +\abd$ satisfy SANE.

On the other hand, problems involving the relative entropy $D(x,y) \coloneqq x \ln(x/y)$ are often modelled as
``minimize $t$'' subject to ``$(-t,x,y)  \in \expCone$'' and additional constraints. We could also have sums so that the problem is of the form
``minimize $\sum t_i$'' subject to ``$(-t_i,x_i,y_i)  \in \expCone$'' and additional constraints.
In those cases, it seems that it could happen that SANE is not satisfied.
\end{remark}

Under SANE, we can state the following result.
\begin{theorem}[Error bounds for direct products of exponential cones]\label{theo:sane}
Suppose \eqref{eq:feas} is feasible with $\stdCone = \expCone \times \cdots \times \expCone$ being a direct product of $m$ exponential cones. Then the following hold.
\begin{enumerate}
	\item  The distance to the PPS condition of  $\{\stdCone, \stdSpace+\abd\}$ satisfies $\dpp(\stdCone,\stdSpace+\abd) \leq m$.
	\item If SANE is satisfied, then $\stdCone$ and $\stdSpace+\abd$ satisfy a uniform H\"olderian error bound
	with exponent
	$2^{-\dpp(\expCone,\stdSpace+\abd)}$.	
\end{enumerate}
\end{theorem}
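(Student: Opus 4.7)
My plan is as follows. Part (i) is immediate: every proper face of $\expCone$ is polyhedral, so $\ell_{\text{poly}}(\expCone)=1$, and therefore by Proposition~\ref{prop:fra_poly}(i) the length $\ell$ of the facial-reduction chain satisfies $\ell -1 \le \sum_{i=1}^m \ell_{\text{poly}}(\expCone) = m$. Since $\dpp(\stdCone,\stdSpace+\abd)$ is defined as the shortest such length minus one, this gives $\dpp(\stdCone,\stdSpace+\abd) \le m$.

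For part (ii) I would invoke SANE to fix a chain $\stdFace_\ell \subsetneq \cdots \subsetneq \stdFace_1 = \stdCone$ of length $\ell-1 = \dpp(\stdCone,\stdSpace+\abd)\le m$, together with normals $\zz_i \in \stdFace_i^*\cap \stdSpace^\perp \cap\{\abd\}^\perp$, in which no block is ever the exceptional face $\Fhat_\infty$. Since faces of a product are products of faces, each $\stdFace_i$ decomposes as $\stdFace_i^1\times\cdots\times \stdFace_i^m$ where each $\stdFace_i^j\in\{\expCone,\Fhat_{-\infty},\Fhat_\beta,\Fhat_{ne},\{\mathbf{0}\}\}$. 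By Proposition~\ref{prop:exp_am}, every such block face is either $1$-amenable (the extreme rays and the trivial/whole-cone cases) or $\frakg_{-\infty}$-amenable (the 2D face $\Fhat_{-\infty}$); since $|t|\le \frakg_{-\infty}(t)$ from \eqref{d:entropy_p}, every block is $\frakg_{-\infty}$-amenable on any bounded set, and hence by Proposition~\ref{prop:direct_prod} each $\stdFace_i$ is itself $\frakg_{-\infty}$-amenable inside $\stdCone$.

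Next, I would assemble the per-step facial residual functions. Writing $\zz_i = (z_{i,1},\ldots,z_{i,m})$, for each $j$ the restriction $z_{i,j}\in(\stdFace_i^j)^*$ exposes either $\stdFace_i^j$ itself (trivial FRF) or a strictly smaller face of $\expCone$ not equal to $\Fhat_\infty$ — so by Corollaries~\ref{col:frf_2dface_entropic}, \ref{col:frf_fb}, \ref{col:frf_ne} (combined with Proposition~\ref{prop:frf_am} when $\stdFace_i^j\neq \expCone$), there is an FRF for $(\stdFace_i^j,z_{i,j})$ with respect to $\expCone$ of the uniform shape $\hat\psi_{i,j}(\epsilon,t) = \kappa_{i,j}\epsilon + \rho_{i,j}(t)\,\frakg_{-\infty}(\epsilon)$, where we absorb all Hölderian exponents $\sqrt{\epsilon}$ into $\frakg_{-\infty}$ using $t^{1/2}\le C_\eta\,\frakg_{-\infty}(t)$ on $[0,\eta]$. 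Plugging these into Proposition~\ref{prop:frf_prod} (which is applicable thanks to the $\frakg_{-\infty}$-amenability just verified) yields a facial residual function $\psi_i$ for $(\stdFace_i,\zz_i)$ with respect to $\stdCone$ of the same mixed form $\psi_i(\epsilon,t) = \tilde\kappa_i\epsilon + \tilde\rho_i(t)\,\frakg_{-\infty}(\epsilon)$, after using the submultiplicativity bound $\frakg_{-\infty}(M\epsilon)\le L^{1+\log_2 M}\frakg_{-\infty}(\epsilon)$ from \eqref{d:entropy_p} to absorb the scaling factor $\sigma(t)$ produced by the proposition.

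Finally I would feed these FRFs into Theorem~\ref{theo:err}. Setting $d(\xx):=\max\{\dist(\xx,\stdCone),\dist(\xx,\stdSpace+\abd)\}$ and iterating the diamond composition $\varphi = \psi_{\ell-1}\comp\cdots\comp\psi_1$, each step preserves the class of functions $\epsilon\mapsto a\epsilon + b\,\frakg_{-\infty}(\epsilon)$ on bounded input, again by \eqref{d:entropy_p} applied to $\frakg_{-\infty}(\epsilon+\varphi_{i-1}(\epsilon,M))$. Hence for every bounded set $B$ there exist constants $\kappa_B,\kappa'_B>0$ with
\begin{equation}\label{eq:mixed_err}
\dist\bigl(\xx,(\stdSpace+\abd)\cap\stdCone\bigr)\le \kappa_B\,d(\xx) + \kappa'_B\,\frakg_{-\infty}\bigl(d(\xx)\bigr),\qquad \forall\,\xx\in B,
\end{equation}
which is the announced uniform mixed entropic--H\"olderian error bound, with the exponent $\gamma=1$ on the linear part being uniform over $B$. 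The step I expect to be most delicate is verifying that the diamond composition really does stay inside the two-term class above: at each composition one must show $\frakg_{-\infty}(\epsilon + a\epsilon + b\,\frakg_{-\infty}(\epsilon))\le C(a,b)\,\frakg_{-\infty}(\epsilon)$ on bounded input, which follows from monotonicity and the submultiplicative bound in \eqref{d:entropy_p}, but it is the bookkeeping that makes the argument work and deserves care.
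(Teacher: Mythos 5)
Part (i) is fine and matches the paper's argument exactly. Part (ii) contains a genuine error whose consequence is that your claimed form of \eqref{eq:mixed_err} is much stronger than what actually holds, and in fact false.

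The error is the claimed absorption $t^{1/2}\le C_\eta\,\frakg_{-\infty}(t)$ on $[0,\eta]$. This inequality goes the wrong way near zero: with $\frakg_{-\infty}(t)=-t\ln t$ for small $t>0$, one has $\sqrt{t}/(-t\ln t)=1/(-\sqrt{t}\ln t)\to\infty$ as $t\to 0^+$, so $\sqrt{t}\geq \frakg_{-\infty}(t)$ near zero, not the reverse. (The paper notes this explicitly in a footnote: $\sqrt{\cdot}$ is the ``worse'' of the two near zero.) Consequently you cannot fold the $\sqrt{\epsilon}$ term coming from Corollary~\ref{col:frf_fb} into the $\frakg_{-\infty}$ term; $\sqrt{\epsilon}$ is the dominant contribution, and any valid per-block FRF must carry it. A second, compounding problem is the assertion that the diamond composition preserves the class $\epsilon\mapsto a\epsilon+b\,\frakg_{-\infty}(\epsilon)$. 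Even if you could normalize each step to this form, the composition does not stay there: already with Proposition~\ref{prop:frf_prod} and the $\frakg_{-\infty}$-amenability factor, the per-face FRF becomes $\sigma(t)\frakg_{-\infty}(\epsilon)+\sigma(t)\sqrt{\frakg_{-\infty}(\epsilon)}$, and diamond-composing $\ell-1$ such functions stacks the square root each time, yielding behaviour like $\samf_{\ell-1}(\epsilon)$ with $\samf=\sqrt{\frakg_{-\infty}}$, i.e.\ roughly $\epsilon^{(1/2)^{\ell-1}}$ up to log corrections. That is the actual shape of \eqref{eq:mixed_err}: $\dist(\xx,(\stdSpace+\abd)\cap\stdCone)\le \kappa\,\samf_{\ell-1}(\dist(\xx))$, which degrades with the chain length $\ell-1=\dpp(\stdCone,\stdSpace+\abd)$. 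Your two-term bound $\kappa_B\,d(\xx)+\kappa'_B\,\frakg_{-\infty}(d(\xx))$ would be independent of $\ell$ and essentially Lipschitz with an entropy correction, which is not attainable in general. To repair the argument, keep the $\sqrt{\epsilon}$ term as the dominant one, track the insertion of $\frakg_{-\infty}$ from Proposition~\ref{prop:frf_prod} to get $\samf=\sqrt{\frakg_{-\infty}}$ per step, and then show by induction that $\varphi_j(\dist(\xx),M)\le\kappa_j\,\samf_j(\dist(\xx))$ over bounded $\xx$, using \eqref{d:entropy_p} for the scaling constants at each composition.
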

\begin{proof}
(i):  All proper faces of $\expCone$ are polyhedral, therefore $\ell_{\text{poly}}(\expCone) = 1$. By item \ref{prop:fra_poly:1} of Proposition~\ref{prop:fra_poly}, there exists a
chain of length $\ell$ satisfying item \ref{prop:fra_poly:3} of Proposition~\ref{prop:fra_poly} such that
$\ell-1 \leq m$. Therefore, $\dpp(\stdCone,\stdSpace+\abd)\leq \ell-1 \leq m$.

(ii): If SANE is satisfied, then there exists a
chain $\stdFace _{\ell}  \subsetneq \cdots \subsetneq \stdFace_1 = \stdCone $ of length $\ell \leq m +1$  as in Proposition~\ref{prop:fra_poly}, together with the corresponding $\zz_{1},\ldots,\zz_{\ell-1}$. Also, the exceptional face $\Fhat_{\infty}$ never appears as one of the blocks of the $\stdFace _i$.

In what follows, for simplicity, we define
\[
\wdist(\xx) \coloneqq \max \{\dist(\xx,\stdSpace+\abd), \dist(\xx, \stdCone) \}.
\]
Then, we invoke Theorem~\ref{theo:err}, which implies that  given a bounded set $B$, there exists a
constant $\kappa _B > 0$ such that
\begin{equation}\label{eq:bound_sane}
\dist\left(\xx, (\stdSpace + \abd) \cap \stdCone\right) \leq \kappa _B (\wdist(\xx)+\varphi(\wdist(\xx),M)),
\end{equation}
where $M = \sup _{\xx\in B} \norm{\xx}$ and
there are two cases for $\varphi$.
If $\ell = 1$, $\varphi$ is the function such
that $\varphi(\epsilon,M) = \epsilon$.
If $\ell \geq 2$, we have $\varphi = \psi _{{\ell-1}}\comp \cdots \comp \psi_{{1}}$, where $\psi _{i}$ is a (suitable positively rescaled shift of a) {\oneFRF} for $\stdFace_{i}$ and $\zz_i$.
In the former case, the PPS condition is satisfied, we have a Lipschitzian error bound and we are done. We therefore assume that the latter case occurs with $\ell - 1 = {\dpp(\stdCone,\stdSpace+\abd)}$.

First, we compute the {\oneFRF}s for each
$\stdFace_i$.
In order to do that, we recall that
each $\stdFace_{i}$ is a direct product $\stdFace_{i}^1\times \cdots \times \stdFace_{i}^m$ where each $\stdFace_{i}^j$ is a face  of $\expCone$, excluding $\Fhat_{\infty}$ by SANE.
Therefore, a {\oneFRF} for $\stdFace_{i}^j$ can be obtained from Corollary~\ref{col:frf_2dface_entropic}, \ref{col:frf_fb} or \ref{col:frf_ne}. In particular, taking the worst\footnote{$\sqrt{\cdot}$ is ``worse'' than $\frakg_{-\infty}$ in that, near zero, $\sqrt{t} \geq \frakg_{-\infty}(t)$. The function $\frakg_{\infty}$ need not be considered because, by SANE, $\stdFace_{\infty}$ never appears.} case in consideration, and taking the maximum of the facial residual functions, there exists a nonnegative monotone nondecreasing function $\rho :\RR_+ \to \RR_+$ such that the function $\psi$ given by
\[
\psi(\epsilon,t) \coloneqq \rho(t) \epsilon +\rho(t)\sqrt{\epsilon }
\]
is a {\oneFRF} for each $\stdFace_{i}^j$. In what follows, in order to simplify the notation, we define $\samf(t) \coloneqq \sqrt{t}$. Also, for every $j$, we use $\samf_j$ to denote the composition of $\samf$ with itself $j$-times, i.e., \begin{equation}\label{eq:hatg}
\samf_j = \underbrace{\samf\circ \cdots \circ \samf}_{j \text{ times}};
\end{equation}
and we set $\samf_0$ to be the identity map.

Using the above notation and Proposition~\ref{prop:frf_prod}, we
conclude the existence of a nonnegative monotone nondecreasing function
$\sigma: \RR_+ \to \RR_+$ such that  the function $\psi _{i}$ given by
\begin{align}
\psi _{i}(\epsilon,t) \coloneqq
\sigma(t)\epsilon + \sigma(t)\samf{(\epsilon)} \notag
\end{align}
is a {\oneFRF} for $\stdFace_i$ and $\zz_i$.
Therefore, for $\xx \in B$, we have
\begin{align}
\psi _{i}(\epsilon,\norm{\xx})  \leq \sigma(M)\epsilon+ \sigma(M)\samf{(\epsilon)} =  \psi _{i}(\epsilon,M), \label{eq:frf_fi}
\end{align}
where $M = \sup _{\xx\in B} \norm{\xx}$.

Next we are going to make a series of arguments related to the following informal principle: over a bounded set only the terms $\samf_j$ with largest $j$ matter.
We start by noting that for any $\xx\in B$ and any $0\le k\le j\le \ell$,
\begin{equation}\label{relationhahaha}
  \samf_k(\wdist(\xx)) = \wdist(\xx)^{2^{-k}} = \wdist(\xx)^{(2^{-k} - 2^{-j})}\wdist(\xx)^{2^{-j}} \le \hat\kappa_{j,k}\wdist(\xx)^{2^{-j}} \le \hat\kappa\samf_j(\wdist(\xx)),
\end{equation}
where $\hat\kappa_{j,k}:= \sup_{x\in B}\wdist(\xx)^{(2^{-k} - 2^{-j})} < \infty$ because $\xx \mapsto \wdist(\xx)^{(2^{-k} - 2^{-j})}$ is continuous, and $\hat\kappa := \max_{0\le k\le j\le \ell}\hat\kappa_{j,k}$.

Now, let $\varphi_j \coloneqq \psi _{{j}}\comp \cdots \comp \psi_{{1}}$, where $\comp$ is the diamond composition defined in \eqref{eq:comp}.
We will show by induction that for every $j \leq \ell-1$ there exists $\kappa _j$ such that
\begin{equation}\label{eq:diamond_bound}
\varphi_j(\wdist(\xx),M) \leq \kappa_j\samf_{j}(\wdist(\xx)), \qquad \forall \xx \in B.
\end{equation}
For $j = 1$, it follows directly from \eqref{eq:frf_fi} and \eqref{relationhahaha}. Now, suppose that the claim is valid for some $j$ such that $j+1 \leq \ell-1$.
By the inductive hypothesis, we have
\begin{align}
\varphi_{j+1}(\wdist(\xx),M) &= \psi _{j+1}(\wdist(\xx)+ \varphi _{j}(\wdist(\xx),M),M) \notag \\
& \leq \psi _{j+1}(\wdist(\xx)+ \kappa_j\samf_{j}(\wdist(\xx)),M) \notag\\
& \leq \psi _{j+1}(\tilde \kappa_j\samf_{j}(\wdist(\xx)),M), \label{eq:varphi_j}
\end{align}
where $\tilde \kappa_j \coloneqq 2\max\{\hat \kappa,\kappa_j\}$ and the last inequality follows from \eqref{relationhahaha}.
Then, we plug $\epsilon = \tilde \kappa_j\samf_{j}(\wdist(\xx))$ in \eqref{eq:frf_fi} to obtain
\begin{align}
\psi _{j+1}(\tilde \kappa_j\samf_{j}(\wdist(\xx)),M) & = \sigma(M)\tilde \kappa_j\samf_{j}(\wdist(\xx)) + \sigma(M)\samf(\tilde \kappa_j\samf_{j}(\wdist(\xx))) \notag\\
& = \sigma(M) \tilde \kappa_j \samf_{j}(\wdist(\xx)) +
\sigma(M) \sqrt{\tilde \kappa_j} \samf_{j+1}(\wdist(\xx))  \notag \\
& \le \sigma(M)(\tilde \kappa_j\hat\kappa + \sqrt{\tilde \kappa_j})\samf_{j+1}(\wdist(\xx)) \label{eq:varphi_j2},
\end{align}
where the last inequality follows from \eqref{relationhahaha}.
Combining \eqref{eq:varphi_j} and \eqref{eq:varphi_j2} concludes the induction proof.
In particular, \eqref{eq:diamond_bound} is
valid for $j = \ell-1$.
Then, taking into account some positive rescaling and shifting (see \eqref{eq:pos_rescale}) and adjusting constants,  from \eqref{eq:bound_sane}, \eqref{eq:diamond_bound} and \eqref{relationhahaha}  we deduce that there exists $\kappa > 0$ such that
\begin{equation*}
\dist\left(\xx, (\stdSpace + \abd) \cap \stdCone\right) \leq \kappa \samf_{\ell-1}(\wdist(\xx)), \qquad \forall \xx \in B
\end{equation*}
with $\samf_{\ell-1}$ as in \eqref{eq:hatg}.
To complete the proof, we recall that  ${\dpp(\stdCone,\stdSpace+\abd)} = \ell-1$.
\end{proof}

%\todo[inline]{SBL: I do not see any problems with the proof itself. However, since $\hat{\frakg}_{\ell-1} = \sqrt{\frakg_{-\infty}} \circ \dots \circ \sqrt{\frakg_{-\infty}}$ is much worse than $\sqrt{\cdot} \circ \dots \circ \sqrt{\cdot}$, my (incorrect) gut instincts ask why we cannot simply use the latter. After all, since $\sqrt{\cdot}$ is locally worse than $\frakg_{-\infty}$, it seems like we should be able to. If I understand correctly, the issue is that we would actually end up with $\sqrt{\sqrt{\cdot}} \circ \dots \circ \sqrt{\sqrt{\cdot}}$ instead, because our approach couples the function of amenability inside the worst case scenario FRF. This seems to fit with the comment in the below remark, where we would have $\sqrt{|\cdot|} \circ \dots \circ \sqrt{|\cdot|}$. Is my understanding correct? I'm trying to think of whether a comment would help the readers, or what that comment would even say. The mental challenge is in keeping track of the compositions, and the reasons for each.}

\begin{remark}[Variants of Theorem~\ref{theo:sane}]
Theorem~\ref{theo:sane} is not tight and admits variants that are somewhat cumbersome to describe precisely. For example, the $\amf_{-\infty}$ function was not taken into account explicitly but simply ``relaxed" to $t\mapsto \sqrt{t}$.
%makes the proof of Theorem~\ref{theo:sane} notationally challenging, so one might wonder whether we could do without it.
%In fact, because all the faces of $\expCone$ are $\alpha$-amenable for $\alpha \in (0,1)$ (see Proposition~\ref{prop:exp_am}), we could have taken $\amf = |\cdot|^\alpha$ when invoking Proposition~\ref{prop:direct_prod} so that
%$\samf = |\cdot|^{\alpha/2}$ and the final error bound would be uniform H\"olderian with exponent ${(\alpha/2)}^{\ell-1}$.
%We note, however, that we may not take $\alpha=1$, therefore the resulting error bound would be worse than \eqref{eq:mixed_err}.

Going for greater generality, we can also drop the SANE assumption altogether and try to be as tight as our analysis permits when dealing with possibly inSANE instances. Although there are several possibilities one must consider, the overall strategy is the same as outlined in the proof of Theorem~\ref{theo:sane}: invoke Theorem~\ref{theo:err}, fix a bounded set $B$, pick a chain of faces as in Proposition~\ref{prop:fra_poly} and upper bound the diamond composition of facial residual function as in \eqref{eq:diamond_bound}. Intuitively, whenever sums of function compositions appear, only the ``higher'' compositions matter. However, the analysis must consider the possibility of $\frakg_{-\infty}$ or $\frakg_{\infty}$ appearing.
%functions must be taken into consideration if the exceptional face $\Fhat_\infty$ appears and we fall under item~\ref{lem:facialresidualsbeta:2} of Corollary~\ref{col:1dfaces_infty}.
%And, of course, we need to take into account the entropic facial residual function for the 2D face $\Fhat_{-\infty}$.
After this is done, it is just a matter to plug this upper bound into \eqref{eq:bound_sane}.
\end{remark}

We conclude this subsection with an application. % of our results.
In \cite{BLT17}, among other results, the authors showed that when a H\"olderian error bound holds, it is possible to derive the convergence rate of several algorithms from the exponent of the error bound. As a consequence,  Theorems~\ref{thm:main_err} and \ref{theo:sane} allow us to apply some of their results  (e.g., \cite[Corollary~3.8]{BLT17}) to the conic feasibility problem with exponential cones, \emph{whenever a H\"olderian error bound holds}.
For non-H\"olderian error bounds appearing in Theorem~\ref{thm:main_err}, different techniques are necessary, such as the ones discussed in \cite{LL20} for deriving convergence rates under more general error bounds.

%\begin{remark}
%Because of item $(i)$ of Theorem~\ref{thm:main_err}, when we have a feasibility problem with a single exponential cone, as far as error bounds go, we do not have to care about the non-exposed face
%\end{remark}

\subsection{Miscellaneous odd behavior and connections to other notions}\label{sec:odd}
In this final subsection, we collect several instances of pathological behaviour that can be found inside the facial structure of the exponential cone.
\begin{example}[H\"olderian bounds and the non-attainment of admissible exponents]\label{ex:exponents}
We recall Definition~\ref{def:hold} and we consider the special case of two closed convex sets $C_1,C_2$ with non-empty intersection.
We say that $\gamma \in (0,1]$ is an \emph{admissible exponent} for $C_1, C_2$ if $C_1$ and $C_2$ satisfy a
uniform H\"olderian error bound with exponent $\gamma$.
It turns out that the supremum of the set of admissible exponents is not itself admissible.
In particular, if $C_1 = \expCone$ and $C_2 = \spanVec \Fhat_{-\infty}$, then we see from Corollary~\ref{col:2d_hold} that
$C_1 \cap C_2 = \Fhat_{-\infty}$ and that $C_1$ and $C_2$ satisfy a uniform H\"olderian error bound for all $\gamma \in (0,1)$; however, in view of the sequence constructed in Remark~\ref{rem:opt}(a), the exponent cannot be chosen to be $\gamma = 1$.

In fact, from Theorem~\ref{thm:main_err} and Remark~\ref{rem:opt}(a), $C_1$ and $C_2$ satisfy an entropic error bound  which is tight and is, in a sense, better than any H\"olderian error bound with $\gamma \in (0,1)$ but worse than a Lipschitzian error bound.
\end{example}

\begin{example}[Non-H\"olderian error bound]\label{ex:non_hold}
The facial structure of $\expCone$ can be used to derive an example of two sets that provably do not have a H\"olderian error bound.	
Let $C_1 = \expCone$ and
$C_2 = \{\zz\}^\perp$, where $z_x=z_y = 0$ and $z_z=1$ so that $C_1\cap C_2=\Fhat_\infty$.
Then, for every $\eta > 0$ and every $\alpha \in (0,1]$, there is no constant $\kappa > 0$ such that
\[
\dist(\xx,\Fhat_\infty) \leq \kappa \max\{\dist(\xx,\expCone)^\alpha, \dist(\xx,\{\zz\}^\perp)^\alpha \}, \qquad \forall \ \xx \in B(\eta).
\]
This is because if there were such a positive $\kappa$, the infimum in Lemma~\ref{lem:non_hold} would be positive, which it is not. This shows that $C_1$ and $C_2$ do not have a H\"olderian error bound.
However, as seen in Theorem~\ref{thm:nonzerogammasec72}, $C_1$ and $C_2$ have a log-type error bound. In particular if
$\qq \in B(\eta)$, using \eqref{proj:p1}, \eqref{proj:p2} and Theorem~\ref{thm:nonzerogammasec72}, we have
\begin{align}
\dist(\qq, \Fhat_\infty) & \leq   \dist(\qq,\{\zz\}^\perp) + \dist(P_{\{\zz\}^\perp}(\qq),\Fhat_\infty) \notag\\
& \leq \dist(\qq,\{\zz\}^\perp) + \max\{2,2\gamma_{\zz,\eta}^{-1}\}\frakg_\infty (\dist(P_{\{\zz\}^\perp}(\qq),\expCone)) \notag\\
&\leq \wdist(\qq) + \max\{2,2\gamma_{\zz,\eta}^{-1}\}\frakg_\infty (2\wdist(\qq)) \label{eq:non_hold},
\end{align}
where $\wdist(\qq) \coloneqq \max\{\dist(\qq,\expCone),\dist(\qq,\{\zz\}^\perp) \}$ and in the last inequality we used the monotonicity of $\frakg_\infty$.
\end{example}
Let $C_1, \cdots, C_m$ be closed convex sets having nonempty intersection and let $C \coloneqq \cap _{i=1}^m C_i$.
Following \cite{LL20}, we say that $\varphi : \RR_+\times \RR_+ \to \RR_+ $  is a \emph{consistent error bound function (CEBF)} for $C_1, \ldots, C_m$ if the following inequality holds
\begin{equation*}
\dist(\xx,\, C) \le \varphi\left(\max_{1 \le i \le m}\dist(\xx, C_i), \, \|\xx\|\right) \ \ \ \forall\ \xx\in\mathcal{E};
\end{equation*}
and the following technical conditions are satisfied for every $a,b\in \RR_+$: $\varphi(\cdot,b)$ is monotone nondecreasing, right-continuous at $0$ and $\varphi(0,b) = 0$; $\varphi(a,\cdot)$ is mononotone nondecreasing. CEBFs are a framework for expressing error bounds and can be used in the convergence analysis of algorithms for convex feasibility problems, see \cite[Sections~3 and 4]{LL20}. For example, $C_1,\ldots,  C_m$ satisfy a H\"olderian error bound (Definition~\ref{def:hold}) if and only if these sets admit a CEBF of the format $\varphi(a,b) \coloneqq \rho(b)\max\{a,a^{\gamma(b)}\}$, where $\rho:\RR_+ \to \RR_+$ and $\gamma:\RR_+ \to (0,1]$ are monotone nondecreasing functions \cite[Theorem~3.4]{LL20}.

We remark that in Example~\ref{ex:non_hold}, although the sets $C_1, C_2$ do not satisfy a H\"olderian error bound, the log-type error bound displayed therein is covered under the framework of consistent error bound functions. This is because
$\frakg_\infty$ is a continuous monotone nondecreasing function and $\gamma_{\zz,\eta}^{-1}$ is monotone nondecreasing as a function of $\eta$ (Remark~\ref{rem:kappa}).
Therefore, in view of \eqref{eq:non_hold}, the function given by $\varphi(a,b) \coloneqq a + \max\{2,2\gamma_{\zz,b}^{-1}\}\frakg_\infty (2a)$ is a CEBF for $C_1$ and $C_2$.

By the way, it seems conceivable that many of our results in Section~\ref{sec:frf_comp} can be adapted to derive CEBFs for arbitrary convex sets.
%error bounds for general convex feasibility problem involving two intersecting closed convex sets $C_1$ and $C_2$, i.e., given a bounded set $B$, find a continuous, zero-at-zero, monotone nondecreasing function $f$ satisfying
%\begin{equation*}
%\dist(\yy, C_1\cap C_2) \leq f\left(\max\{\dist(\yy,C_1),\dist(\yy,C_2)\}\right),\qquad \forall \yy \in B.
%\end{equation*}
Specifically,  Lemma~\ref{lem:facialresidualsbeta}, Theorem~\ref{thm:1dfacesmain}, and Lemma~\ref{lem:infratio} only rely on convexity rather than on the more specific structure of cones.

Next, we will see that we can also adapt Examples~\ref{ex:exponents} and \ref{ex:non_hold}
to find instances of odd behavior of the so-called \emph{Kurdyka-{\L}ojasiewicz (KL) property} \cite{BDL07,BDLS07,ABS13,ABRS10,BNPS17,LP18}. First, we
recall some notations and definitions.
Let $f: \RR^n\to \RR \cup \{+\infty\}$ be a proper closed convex extended-real-valued function. We denote by $\dom \partial f$ the set of points for
which the subdifferential $\partial f(\xx)$ is non-empty and by $[a < f < b]$ the set of $\xx$ such that
$a < f(\xx) < b$.
As in \cite[Section~2.3]{BNPS17}, we define for $r_0\in (0,\infty)$ the set
\begin{equation*}
  {\cal K}(0,r_0) := \{\phi\in C[0,r_0)\cap C^1(0,r_0)\;|\; \phi \mbox{ is concave}, \ \phi(0) = 0, \ \phi'(r) > 0\ \forall r\in (0,r_0)\}.
\end{equation*}
Let $B(\xx,\epsilon)$ denote the closed ball of radius $\epsilon > 0$ centered at $\xx$.
With that, we say that $f$ satisfies the KL property at $\xx \in \dom \partial f$ if
there exist $r_0 \in (0,\infty)$, $\epsilon > 0$  and  $\phi \in \mathcal{K}(0,r_0)$ such that for all $\yy \in B(\xx,\epsilon) \cap [f(\xx) < f < f(\xx) + r_0 ]$ we have
\[
\phi'(f(\yy)-f(\xx))\dist(0,\partial f(\yy)) \geq 1.
\]
In particular, as in \cite{LP18}, we say that $f$ satisfies the \emph{KL property with exponent $\alpha\in [0,1)$ at $\xx \in \dom \partial f$},  if $\phi$ can be taken to be $\phi(t) = ct^{1-\alpha}$ for some positive constant $c$.
Next, we need a result which is a corollary of \cite[Theorem~5]{BNPS17}.

\begin{proposition}\label{prop:error_kl}
Let $C_1, C_2 \subseteq \RR^n$ be closed convex sets with $C_1 \cap C_2 \neq \emptyset$. Define $f: \RR^n \to \RR$
as
\[
f(\yy) = \dist(\yy,C_1)^2 + \dist(\yy,C_2)^2.
\]
Let $\xx \in C_1\cap C_2$, $\gamma \in (0,1]$. Then, there exist $\kappa > 0$ and $\epsilon > 0 $ such that
\begin{equation}\label{eq:error}
\dist(\yy, C_1\cap C_2) \leq \kappa \max\{\dist(\yy,C_1),\dist(\yy,C_2)\}^\gamma,\qquad \forall \yy \in B(\xx,\epsilon)
\end{equation}
if and only if $f$
 satisfies the KL property with exponent $1-\gamma/2$ at $\xx$.
\end{proposition}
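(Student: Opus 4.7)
The plan is to reduce the statement to \cite[Theorem~5]{BNPS17}, which characterizes the KL property of a proper closed convex function (whose minimum is zero and is attained) in terms of a Łojasiewicz-type error bound controlling $\dist(\yy, \argmin f)$ by a function of $f(\yy)$. For the special function $f(\yy) = \dist(\yy,C_1)^2+\dist(\yy,C_2)^2$, the set of minimizers is exactly $C_1\cap C_2$ and the minimum value is $0$, so the framework of \cite{BNPS17} applies at any $\xx\in C_1\cap C_2$. The main task is to rewrite both sides of the proposed equivalence in the canonical ``function value vs.\ distance to argmin'' format used there.

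First I would record that $\yy \mapsto \dist(\yy,C_i)^2$ is continuously differentiable with gradient $2(\yy - P_{C_i}(\yy))$, so $f$ is $C^1$ with $\nabla f(\yy) = 2(\yy-P_{C_1}(\yy)) + 2(\yy-P_{C_2}(\yy))$, and consequently $\|\nabla f(\yy)\| \le 2\dist(\yy,C_1)+2\dist(\yy,C_2)$. Setting $M(\yy) := \max\{\dist(\yy,C_1),\dist(\yy,C_2)\}$, I would observe the elementary pinching
\[
M(\yy)^2 \le f(\yy) \le 2\,M(\yy)^2.
\]
Hence, near $\xx$, the H\"olderian error bound $\dist(\yy,C_1\cap C_2) \le \kappa M(\yy)^{\gamma}$ is equivalent (up to absorbing constants into $\kappa$) to the bound
\[
\dist(\yy,\argmin f) \le \kappa'\, f(\yy)^{\gamma/2},
\]
because $\argmin f = C_1\cap C_2$ and one may sandwich $M(\yy)^\gamma$ between positive multiples of $f(\yy)^{\gamma/2}$ using the pinching above.

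Next I would invoke \cite[Theorem~5]{BNPS17}: for a proper closed convex function with nonempty minimizer set and minimum value zero, the KL property at $\xx\in\argmin f$ with desingularizing function $\phi(t)=ct^{1-\alpha}$ (i.e., KL exponent $\alpha\in[0,1)$) is equivalent to a local error bound $\dist(\yy,\argmin f) \le C f(\yy)^{1-\alpha}$ for $\yy$ in a neighborhood of $\xx$. Matching exponents, $1-\alpha=\gamma/2$ gives $\alpha = 1-\gamma/2$, so the error bound in the displayed assumption corresponds precisely to the KL property at $\xx$ with exponent $1-\gamma/2$. Combining these steps yields both implications.

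The main obstacle is purely bookkeeping: verifying that the neighborhood, the additional condition $\yy\in[f(\xx)<f<f(\xx)+r_0]$ appearing in the definition of the KL property, and the constants can be chosen uniformly so that the two formulations match without losing the exponent. This is routine because $f(\xx)=0$, $f$ is continuous, and on a small ball around $\xx$ the set $[0<f<r_0]$ coincides with $B(\xx,\epsilon)\setminus(C_1\cap C_2)$ for some $\epsilon$; on $C_1\cap C_2$ both inequalities in the equivalence are trivial, so restricting to the complement is harmless.
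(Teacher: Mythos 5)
Your proposal is correct and follows essentially the same route as the paper's proof: you note that $\argmin f = C_1\cap C_2$ with $\min f = 0$, use the pinching $M(\yy)^2 \le f(\yy) \le 2M(\yy)^2$ to rewrite the H\"olderian error bound in the form $\dist(\yy,\argmin f)\le \kappa' f(\yy)^{\gamma/2}$, and then invoke \cite[Theorem~5]{BNPS17} to match exponents $1-\alpha=\gamma/2$. The extra gradient computation for $f$ is harmless but unnecessary, since the cited theorem already encapsulates the equivalence between the error bound and the KL inequality.
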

\begin{proof}
Note that $\inf f = 0$ and $\argmin f = C_1\cap C_2$.
Furthermore, \eqref{eq:error} is equivalent to the existence of $\kappa' > 0$ and $\epsilon > 0$ such that
\[
\dist(\yy, \argmin f) \leq \varphi(f(\yy)),\qquad \forall \yy \in B(\xx,\epsilon),
\]
where $\varphi$ is the function given by
$\varphi(r) = \kappa' r^{\gamma/2}$.
With that, the result follows from \cite[Theorem~5]{BNPS17}.
\end{proof}
%
%\begin{proof}
%First, we note that $f$ is differentiable with
%\[
%f'(\yy) = 2\yy - \proj{C_1}(\yy) - \proj{C_2}(\yy), \qquad \forall \yy \in \RR^n.
%\]	
%%Therefore,
%%\[
%%\norm{f'(\yy)}^2 = \dist(0,\partial f(\yy))^2 =  \dist(\yy,C_1)^2 + 2\inProd{\yy - \proj{C_1}(\yy)}{\yy - \proj{C_2}(\yy)}
%%+ \dist(\yy,C_2)^2
%%\]	
%Suppose that $C_1,C_2$ satisfy a uniform H\"olderian error bound with exponent $\gamma \in (0,1]$.
%Let $\xx \in C_1\cap C_2$ and let $U$ be the open ball of radius $1$ centred in $\xx$. Let
%$\nu = \infty$.
%Finally, let $\yy \in U$ be such that $f(\yy) > 0$ (so that $\yy \not \in C_1\cap C_2$).
%By convexity, we have
%\[
%f(\proj{C_1\cap C_2}(\yy)) \geq f(\yy) + \inProd{f'(\yy)}{\proj{C_1\cap C_2}(\yy)- \yy }.
%\]
%Since $f(\proj{C_1\cap C_2}(\yy)) = 0$, applying
%the Cauchy-Schwarz inequality we obtain
%\begin{equation}\label{eq:kl}
%f(\yy) \leq \norm{f'(\yy)}\dist(\yy, C_1\cap C_2).
%\end{equation}
%Then, by the equivalence of norms in real finite dimensional spaces and the fact that $C_1, C_2$ satisfy a H\"olderian error bound with exponent $\gamma$, there exists $\kappa > 0$ such that
%\[
%\dist(\zz,C_1\cap C_2) \leq \kappa{f(\zz)}^{\gamma/2}, \qquad \forall \zz \in U.
%\]
%Plugging this bound in \eqref{eq:kl}, we obtain
%\[
%f(\yy) \leq \kappa \norm{f'(\yy)}f(\yy)^{\gamma/2}.
%\]
%Since $f(\yy) > 0$, this implies that $1 \leq \norm{f'(\yy)}f(\yy)^{\gamma/2-1}$. Therefore, $f$ satisfies the KL property with exponent $1- \gamma/2$ at $\xx$.
%
%(... to be continued)
%\end{proof}

\begin{example}[Examples in the KL world]	
In Example~\ref{ex:exponents}, we have two sets $C_1, C_2$ satisfying a uniform H\"olderian error bound for $\gamma \in (0,1)$ but not for $\gamma = 1$.
Because $C_1$ and $C_2$ are cones and the corresponding distance functions are positively homogeneous, this implies that for $\mathbf{0} \in C_1 \cap C_2$, a Lipschitzian error bound never holds at any neighbourhood of $\mathbf{0}$. That is, given $\eta > 0$, there is no $\kappa > 0$ such that
\[
\dist(\yy, C_1\cap C_2) \leq \kappa \max\{\dist(\yy,C_1),\dist(\yy,C_2)\},\qquad \forall \yy \in B(\eta)
\]
holds.
Consequently, the  function $f$ in Proposition~\ref{prop:error_kl} satisfies the KL property with exponent $\alpha$ for any $\alpha \in (1/2,1)$ at the origin, but not for $\alpha = 1/2$. To the best of our knowledge, this is the first explicitly constructed function in the literature such that the infimum of KL exponents at a point is not itself a KL exponent.

Similarly, from Example~\ref{ex:non_hold} we obtain $C_1,C_2$ for which \eqref{eq:error} does not hold for $\mathbf{0} \in C_1\cap C_2$ with any chosen $\kappa,\varepsilon>0,\;\gamma \in \left(0,1 \right]$. Thus from Proposition~\ref{prop:error_kl} we obtain a function $f$ that does not satisfy the KL property with exponent $\beta \in [1/2,1)$ at the origin. Since a function satisfying the KL property with exponent $\alpha\in [0,1)$ at an $\xx\in \dom \partial f$ necessarily satisfies it with exponent $\beta$ for any $\beta \in [\alpha,1)$ at $\xx$, we see that this $f$ does not satisfy the KL property with any exponent at the origin. On passing, we would like to point out that there are functions known in the literature that fail to satisfy the KL property; e.g., \cite[Example~1]{BDLS07}.

%\color{blue}

%Similarly, from Example~\ref{ex:non_hold} we obtain $C_1,C_2$ for which \eqref{eq:error} does not hold for any chosen $\kappa,\varepsilon, \xx \in C_1\cap C_2,\;\gamma \in \left(0,1 \right]$. Defining the function $f$ from Proposition~\ref{prop:error_kl} with these choices of $C_1,C_2$ thus yields an $f$ that does not satisfy the KL property with any exponent in $\left[1/2,1\right]$ at any $\xx \in C_1 \cap C_2$.
%\color{black}
\end{example}

\section{Concluding remarks}\label{sec:conclusion}
In this work, we presented an extension of the results of \cite{L17} and showed how to obtain error bounds for conic linear systems using {\oneFRF}s and facial reduction (Theorem~\ref{theo:err}) even when the underlying cone is not amenable. %Nevertheless, if the cone is indeed amenable or, more generally, $\frakg$-amenable, then one can use extra tools to obtain the facial residual functions, e.g., Propositions~\ref{prop:frf_prod} and ~\ref{prop:frf_am}.
Related to facial residual functions, we also developed techniques that aid in their computation; see Section~\ref{sec:frf_comp}.
Finally, all techniques and results developed in Section~\ref{sec:frf} were used in some shape or form in order to obtain error bounds for the exponential cone in Section~\ref{sec:exp_cone}.
Our new framework unlocks analysis for cones not reachable with the techniques developed in \cite{L17}; these include cones that are not facially exposed, as well as cones for which the projection operator has no simple closed form or is only implicitly specified. These were, until now, significant barriers against error bound analysis for many cones of interest.

As future work, we are planning to use the techniques developed in this paper to analyze and obtain error bounds for some of these other cones that have been previously unapproachable. Potential examples include the cone of $n\times n$ completely positive matrices and its dual, the cone of $n\times n$ copositive matrices.
The former is not facially exposed when $n\geq 5$ (see \cite{Zh18}) and the latter is not facially exposed when $n \geq 2$. It would be interesting to clarify how far error bound problems for these cones can be tackled by our framework. Or, more ambitiously, we could try to obtain some of the facial residual functions and some error bound results.
Of course, a significant challenge is that their facial structure is not completely understood, but we believe that even partial results for general $n$ or complete results for specific values of $n$ would be relevant and, possibly, quite non-trivial. Finally, as suggested by one of the reviewers, our framework may be enriched by investigating further geometric interpretations of the key quantity $\gamma_{\zz,\eta}$ in \eqref{gammabetaeta}, beyond Figure~\ref{fig:uvw}. For instance, it will be interesting to see whether the positivity of $\gamma_{\zz,\eta}$ is related to some generalization of the angle condition in \cite{YangNg02}, which was originally proposed for the study of Lipschitz error bounds.

\bibliographystyle{abbrvurl}
\bibliography{bib_plain}

\begin{thebibliography}{10}

\bibitem{ABRS10}
H.~Attouch, J.~Bolte, P.~Redont, and A.~Soubeyran.
\newblock Proximal alternating minimization and projection methods for
  nonconvex problems: An approach based on the {K}urdyka-{{\L}}ojasiewicz
  inequality.
\newblock {\em Mathematics of Operations Research}, 35:438 -- 457, 2010.

\bibitem{ABS13}
H.~Attouch, J.~Bolte, and B.~F. Svaiter.
\newblock Convergence of descent methods for semi-algebraic and tame problems:
  proximal algorithms, forward-backward splitting, and regularized
  {G}auss-{S}eidel methods.
\newblock {\em Mathematical Programming}, 137:92 -- 129, 2013.

\bibitem{Ba73}
G.~P. Barker.
\newblock The lattice of faces of a finite dimensional cone.
\newblock {\em Linear Algebra and its Applications}, 7(1):71 -- 82, 1973.

\bibitem{Ba81}
G.~P. Barker.
\newblock Theory of cones.
\newblock {\em Linear Algebra and its Applications}, 39:263 -- 291, 1981.

\bibitem{BB96}
H.~H. Bauschke and J.~M. Borwein.
\newblock On projection algorithms for solving convex feasibility problems.
\newblock {\em SIAM Review}, 38(3):367--426, 1996.

\bibitem{BBL99}
H.~H. Bauschke, J.~M. Borwein, and W.~Li.
\newblock Strong conical hull intersection property, bounded linear regularity,
  {J}ameson's property ({G}), and error bounds in convex optimization.
\newblock {\em Mathematical Programming}, 86(1):135--160, Sep 1999.

\bibitem{bauschke2018proximal}
H.~H. Bauschke and S.~B. Lindstrom.
\newblock Proximal averages for minimization of entropy functionals.
\newblock {\em arXiv preprint arXiv:1807.08878}, 2020.

\bibitem{BDL07}
J.~Bolte, A.~Daniilidis, and A.~Lewis.
\newblock The {{\L}}ojasiewicz inequality for nonsmooth subanalytic functions
  with applications to subgradient dynamical systems.
\newblock {\em {SIAM} Journal on Optimization}, 17:1205 -- 1223, 2007.

\bibitem{BDLS07}
J.~Bolte, A.~Daniilidis, A.~Lewis, and M.~Shiota.
\newblock Clarke subgradients of stratifiable functions.
\newblock {\em {SIAM} Journal on Optimization}, 18:556 -- 572, 2007.

\bibitem{BNPS17}
J.~Bolte, T.~P. Nguyen, J.~Peypouquet, and B.~W. Suter.
\newblock From error bounds to the complexity of first-order descent methods
  for convex functions.
\newblock {\em Mathematical Programming}, 165(2):471--507, 2017.

\bibitem{BLT17}
J.~M. Borwein, G.~Li, and M.~K. Tam.
\newblock Convergence rate analysis for averaged fixed point iterations in
  common fixed point problems.
\newblock {\em SIAM Journal on Optimization}, 27(1):1--33, 2017.

\bibitem{BL2016}
J.~M. Borwein and S.~B. Lindstrom.
\newblock Meetings with {L}ambert {W} and other special functions in
  optimization and analysis.
\newblock {\em Pure and App. Func. Anal.}, 1(3):361--396, 2016.

\bibitem{BW81}
J.~M. Borwein and H.~Wolkowicz.
\newblock Regularizing the abstract convex program.
\newblock {\em Journal of Mathematical Analysis and Applications}, 83(2):495 --
  530, 1981.

\bibitem{burachik2019generalized}
R.~S. Burachik, M.~N. Dao, and S.~B. Lindstrom.
\newblock The generalized {B}regman distance.
\newblock {\em arXiv preprint arXiv:1909.08206}, 2020.

\bibitem{CS17}
V.~Chandrasekaran and P.~Shah.
\newblock Relative entropy optimization and its applications.
\newblock {\em Mathematical Programming}, 161(1):1--32, 2017.

\bibitem{CKV21}
C.~Coey, L.~Kapelevich, and J.~P. Vielma.
\newblock Solving natural conic formulations with hypatia.jl.
\newblock {\em ArXiv e-prints}, 2021.
\newblock \href {http://arxiv.org/abs/2005.01136} {\path{arXiv:2005.01136}}.

\bibitem{DE21}
J.~Dahl and E.~D. Andersen.
\newblock A primal-dual interior-point algorithm for nonsymmetric
  exponential-cone optimization.
\newblock {\em Mathematical Programming}, Mar 2021.

\bibitem{FK94}
J.~Faraut and A.~Kor\'{a}nyi.
\newblock {\em Analysis on Symmetric Cones}.
\newblock Oxford Mathematical Monographs. Clarendon Press, Oxford, 1994.

\bibitem{FB08}
L.~Faybusovich.
\newblock Several {J}ordan-algebraic aspects of optimization.
\newblock {\em Optimization}, 57(3):379--393, 2008.

\bibitem{Fr21}
H.~A. Friberg.
\newblock Projection onto the exponential cone: a univariate root-finding
  problem.
\newblock {\em Optimization Online}, Jan. 2021.

\bibitem{GPR13}
J.~Gouveia, P.~A. Parrilo, and R.~R. Thomas.
\newblock Lifts of convex sets and cone factorizations.
\newblock {\em Mathematics of Operations Research}, 38(2):248--264, 2013.

\bibitem{HM11}
D.~Henrion and J.~Malick.
\newblock Projection methods for conic feasibility problems: applications to
  polynomial sum-of-squares decompositions.
\newblock {\em Optimization Methods and Software}, 26(1):23--46, 2011.

\bibitem{HF52}
A.~J. Hoffman.
\newblock On approximate solutions of systems of linear inequalities.
\newblock {\em Journal of Research of the National Bureau of Standards},
  49(4):263--265, 1952.

\bibitem{Io17}
A.~D. Ioffe.
\newblock {\em Variational Analysis of Regular Mappings: Theory and
  Applications}.
\newblock Springer Monographs in Mathematics. Springer International
  Publishing, 2017.

\bibitem{KT19}
M.~Karimi and L.~Tunçel.
\newblock {D}omain-{D}riven {S}olver ({DDS}) {V}ersion 2.0: a {MATLAB}-based
  software package for convex optimization problems in domain-driven form.
\newblock {\em ArXiv e-prints}, 2019.
\newblock \href {http://arxiv.org/abs/1908.03075} {\path{arXiv:1908.03075}}.

\bibitem{LP98}
A.~S. Lewis and J.-S. Pang.
\newblock Error bounds for convex inequality systems.
\newblock In J.-P. Crouzeix, J.-E. Mart\'inez-Legaz, and M.~Volle, editors,
  {\em Generalized Convexity, Generalized Monotonicity: Recent Results}, pages
  75--110. Springer US, 1998.

\bibitem{Li10}
G.~Li.
\newblock On the asymptotically well behaved functions and global error bound
  for convex polynomials.
\newblock {\em SIAM Journal on Optimization}, 20(4):1923--1943, 2010.

\bibitem{Li13}
G.~Li.
\newblock Global error bounds for piecewise convex polynomials.
\newblock {\em Mathematical Programming}, 137(1):37--64, 2013.

\bibitem{LMP15}
G.~Li, B.~S. Mordukhovich, and T.~S. Ph{\d{a}}m.
\newblock New fractional error bounds for polynomial systems with applications
  to {H}{\"o}lderian stability in optimization and spectral theory of tensors.
\newblock {\em Mathematical Programming}, 153(2):333--362, 2015.

\bibitem{LP18}
G.~Li and T.~K. Pong.
\newblock Calculus of the exponent of {K}urdyka-{{\L}}ojasiewicz inequality and
  its applications to linear convergence of first-order methods.
\newblock {\em Foundations of Computational Mathematics}, 18:1199 -- 1232,
  2018.

\bibitem{WWJD}
S.~B. Lindstrom.
\newblock The art of modern homo habilis mathematicus, or: What would {J}on
  {B}orwein do?
\newblock In B.~Sriraman, editor, {\em Handbook of the Mathematics of the Arts
  and Sciences}. Springer, 2020.

\bibitem{LiuPat18}
M.~Liu and G.~Pataki.
\newblock Exact duals and short certificates of infeasibility and weak
  infeasibility in conic linear programming.
\newblock {\em Mathematical Programming}, 167(2):435--480, Feb 2018.

\bibitem{LL20}
T.~Liu and B.~F. Louren\c{c}o.
\newblock Convergence analysis under consistent error bounds.
\newblock {\em ArXiv e-prints}, 2020.
\newblock \href {http://arxiv.org/abs/2008.12968} {\path{arXiv:2008.12968}}.

\bibitem{L17}
B.~F. Louren\c{c}o.
\newblock {Amenable cones: error bounds without constraint qualifications}.
\newblock {\em Mathematical Programming}, 186:1--48, 2021.

\bibitem{LMT18}
B.~F. Louren\c{c}o, M.~Muramatsu, and T.~Tsuchiya.
\newblock Facial reduction and partial polyhedrality.
\newblock {\em SIAM Journal on Optimization}, 28(3):2304--2326, 2018.

\bibitem{LRS20}
B.~F. Louren\c{c}o, V.~Roshchina, and J.~Saunderson.
\newblock Amenable cones are particularly nice.
\newblock {\em arXiv e-prints}, November 2020.
\newblock \href {http://arxiv.org/abs/2011.07745} {\path{arXiv:2011.07745}}.

\bibitem{MC2020}
{{MOSEK} {ApS}}.
\newblock {\em {MOSEK} Modeling Cookbook Release 3.2.2}, 2020.
\newblock URL: \url{https://docs.mosek.com/modeling-cookbook/index.html}.

\bibitem{YangNg02}
K.~F. Ng and W.~H. Yang.
\newblock Error bounds for abstract linear inequality systems.
\newblock {\em SIAM Journal on Optimization}, 13(1):24--43, 2002.

\bibitem{OCPB16}
B.~O'Donoghue, E.~Chu, N.~Parikh, and S.~Boyd.
\newblock Conic optimization via operator splitting and homogeneous self-dual
  embedding.
\newblock {\em Journal of Optimization Theory and Applications},
  169(3):1042--1068, June 2016.

\bibitem{Pang97}
J.-S. Pang.
\newblock Error bounds in mathematical programming.
\newblock {\em Mathematical Programming}, 79(1):299--332, Oct 1997.

\bibitem{PY21}
D.~Papp and S.~Y{\i}ld{\i}z.
\newblock Alfonso: Matlab package for nonsymmetric conic optimization.
\newblock {\em To be appear in INFORMS Journal on Computing}.

\bibitem{Pa00}
G.~Pataki.
\newblock The geometry of semidefinite programming.
\newblock In H.~Wolkowicz, R.~Saigal, and L.~Vandenberghe, editors, {\em
  Handbook of semidefinite programming: theory, algorithms, and applications}.
  Kluwer Academic Publishers, online version at
  \url{http://www.unc.edu/~pataki/papers/chapter.pdf}, 2000.

\bibitem{Pa07}
G.~Pataki.
\newblock On the closedness of the linear image of a closed convex cone.
\newblock {\em Math. Oper. Res.}, 32(2):395--412, 2007.

\bibitem{Pa13_2}
G.~Pataki.
\newblock On the connection of facially exposed and nice cones.
\newblock {\em J. Math. Anal. Appl.}, 400(1):211--221, 2013.

\bibitem{Pa13}
G.~Pataki.
\newblock Strong duality in conic linear programming: Facial reduction and
  extended duals.
\newblock In {\em Computational and Analytical Mathematics}, volume~50, pages
  613--634. Springer New York, 2013.

\bibitem{Ve14}
V.~Roshchina.
\newblock Facially exposed cones are not always nice.
\newblock {\em SIAM J. Optim.}, 24(1):257--268, 2014.

\bibitem{RT19}
V.~Roshchina and L.~Tun\c{c}el.
\newblock Facially dual complete (nice) cones and lexicographic tangents.
\newblock {\em SIAM J. Optim.}, 29(3):2363--2387, 2019.

\bibitem{Se15}
S.~A. Serrano.
\newblock {\em Algorithms for unsymmetric cone optimization and an
  implementation for problems with the exponential cone}.
\newblock Phd thesis, Stanford University, 2015.

\bibitem{SY15}
A.~Skajaa and Y.~Ye.
\newblock A homogeneous interior-point algorithm for nonsymmetric convex conic
  optimization.
\newblock {\em Math. Program.}, 150(2):391--422, 2015.

\bibitem{St00}
J.~F. Sturm.
\newblock Error bounds for linear matrix inequalities.
\newblock {\em {SIAM} Journal on Optimization}, 10(4):1228--1248, Jan. 2000.

\bibitem{ST90}
C.-H. Sung and B.-S. Tam.
\newblock A study of projectionally exposed cones.
\newblock {\em Linear Algebra and its Applications}, 139:225 -- 252, 1990.

\bibitem{WM13}
H.~Waki and M.~Muramatsu.
\newblock Facial reduction algorithms for conic optimization problems.
\newblock {\em Journal of Optimization Theory and Applications},
  158(1):188--215, 2013.

\bibitem{Zh18}
Q.~Zhang.
\newblock Completely positive cones: Are they facially exposed?
\newblock {\em Linear Algebra and its Applications}, 558:195 -- 204, 2018.

\end{thebibliography}

\end{document}